\theoremstyle{definition}
\newtheorem{Thm}{Theorem}[section]
\newtheorem{Rem}[Thm]{Remark}
\newtheorem{Lem}[Thm]{Lemma}
\newtheorem{Cor}[Thm]{Corollary}
\newtheorem{Prop}[Thm]{Proposition}
\newcommand{\Z}{\mathbb{Z}}
\newcommand{\F}{\mathbb{F}}
\newcommand{\Sp}{S\!p}
\newcommand{\Spin}{S\!pin}
\newcommand{\Ss}{S\!s}
\newcommand{\SO}{S\!O}
\newcommand{\SU}{S\!U}
\newcommand{\fib}{\mathrm{fib}}
\newcommand{\qqed}{\hfill\Box} 
\newcommand{\namedright}[3]{\ensuremath{#1\stackrel{#2}
 {\longrightarrow}#3}}
\newcommand{\nameddright}[5]{\ensuremath{#1\stackrel{#2}
 {\longrightarrow}#3\stackrel{#4}{\longrightarrow}#5}}
\newcommand{\namedddright}[7]{\ensuremath{#1\stackrel{#2}
 {\longrightarrow}#3\stackrel{#4}{\longrightarrow}#5
  \stackrel{#6}{\longrightarrow}#7}}
\newcommand{\larrow}{\relbar\!\!\relbar\!\!\rightarrow}
\newcommand{\llarrow}{\relbar\!\!\relbar\!\!\larrow}
\newcommand{\lllarrow}{\relbar\!\!\relbar\!\!\llarrow}
\newcommand{\lnamedright}[3]{\ensuremath{#1\stackrel{#2}
 {\larrow}#3}}
\newcommand{\llnamedright}[3]{\ensuremath{#1\stackrel{#2}
 {\llarrow}#3}}
\newcommand{\llnameddright}[5]{\ensuremath{#1\stackrel{#2}
 {\llarrow}#3\stackrel{#4}{\llarrow}#5}}
\newcommand{\llnamedddright}[7]{\ensuremath{#1\stackrel{#2}
 {\llarrow}#3\stackrel{#4}{\llarrow}#5
  \stackrel{#6}{\llarrow}#7}}
\newcommand{\lllnamedright}[3]{\ensuremath{#1\stackrel{#2}
 {\lllarrow}#3}}
\title{Mod $p$ decompositions of the loop spaces of compact symmetric spaces}
\date{\today}
\author{Shizuo KAJI}
\thanks{The first named author was partially supported by KAKENHI, Grant-in-Aid for Young 
     Scientists (B) 26800043.}
\address{Department of Mathematical Sciences,
Faculty of Science \endgraf
Yamaguchi University  \endgraf 
1677-1, Yoshida, Yamaguchi 753-8512, Japan}
\email{skaji@yamaguchi-u.ac.jp}
\author{Akihiro Ohsita}
\address{Faculty of Economics,
Osaka University of Economics \endgraf
2-2-8 Osumi, Hiogashiyodogawa Ward, Osaka 533-8533,
Japan}
\email{ohsita@osaka-ue.ac.jp}
\author{Stephen Theriault}
\address{School of Mathematics, University of Southampton, 
    Southampton SO17 1BJ, United Kingdom}
\email{S.D.Theriault@soton.ac.uk}
\subjclass[2010]{ 
Primary 55P15, 55P40; Secondary 57T20.}
\keywords{homotopy decomposition, symmetric space, homotopy exponent}
\begin{document}

\maketitle

\begin{abstract}
We give $p$-local homotopy decompositions of the loop spaces of compact, 
simply-connected symmetric spaces for quasi-regular primes.
The factors are spheres, sphere bundles over spheres, and their loop spaces. 
As an application, upper bounds for the homotopy exponents are determined. 
\end{abstract}

\section{Introduction} 

If $X$ is a topological space and there is a homotopy equivalence 
$X\simeq A\times B$ then there are induced isomorphisms of 
homotopy groups $\pi_{m}(X)\cong\pi_{m}(A)\oplus\pi_{m}(B)$ for 
every $m\geq 1$. So in order to determine the homotopy groups 
of a space it is useful to first try to decompose it as a product, 
up to homotopy equivalence. Ideally, the factors are simpler spaces 
which are easy to recognize, so that one can deduce homotopy group 
information about the original space $X$ from known information 
about the factors. This approach has been very successful in obtaining 
important information about the homotopy groups of Lie groups~\cite{Harris3,MNT}, 
Moore spaces~\cite{CMN1}, finite $H$-spaces~\cite{CN}, and 
certain manifolds~\cite{Beben-Wu,Beben-Theriault}. 

In practise, it helps if the initial space $X$ is an $H$-space. Then the 
continuous multiplication can be used to multiply together maps 
from potential factors. For this reason, it is often the loop space 
of the original space that is decomposed up to homotopy, as looping 
introduces a multiplication and it simply shifts the homotopy groups 
of $X$ down one dimension. It also helps to localize 
at a prime $p$, or rationally, in order to simplify the calculations while 
retaining $p$-primary features of the homotopy groups. 

From now on, let $p$ be an odd prime and assume that all spaces and maps 
have been localized at~$p$. Harris~\cite{Harris3} and Mimura, Nishida and 
Toda~\cite{MNT} gave $p$-local homotopy decompositions of torsion free 
simply-connected, simple compact Lie groups into products of irreducible 
factors. These were used, for example, in~\cite{MNT} to calculate the 
$p$-primary homotopy groups of the Lie group through a range, 
in~\cite{Bendersky-Davis-Mimura} to calculate the $v_{1}$-periodic 
homotopy groups in certain cases, and  in~\cite{Davis-Theriault} to determine 
bounds on the homotopy exponents in certain cases. 
Here, the $p$-primary homotopy exponent of a space $X$ is the least power 
of $p$ that annihilates the $p$-torsion in $\pi_{\ast}(X)$. 

It is natural to extend the decomposition approach to other spaces related 
to Lie groups. Some work has been done to determine homotopy decompositions 
of the loops on certain homogeneous spaces~\cite{Beben,Grbic-Zhao} and analyze the 
exponent implications. In this paper we consider the loops on symmetric 
spaces with an eye towards deducing exponent information. 

Compact, irreducible, simply-connected Riemannian symmetric spaces were 
classified by Cartan~\cite{Cartan1,Cartan2} and an explicit list as homogeneous spaces 
was given in~\cite{Ishitoya-Toda}. In an ad-hoc manner, the homotopy groups 
of symmetric spaces have been studied in several papers, for 
example \cite{Beben,Burns,Harris1,Harris2,Hirato-Kachi-Mimura,Mimura,Oshima,Terzic}. 
We give a more systematic approach. 

A compact Lie group is \emph{quasi-$p$-regular} 
if it is $p$-locally homotopy equivalent to a product of spheres and 
sphere bundles over spheres. Let $G/H$ be a compact, irreducible, 
simply-connected Riemannian symmetric space where $G$ 
is quasi-$p$-regular. Then for $p\geq 5$ we obtain $p$-local homotopy decompositions 
for $\Omega(G/H)$, which are stated explicitly in Theorems~\ref{thm:classical}  
and~\ref{thm:exceptional}. It is notable that in all the decompositions, the factors 
are spheres, sphere bundles over spheres, and the loops on these spaces. 

The key to our method is to replace the fibration 
\begin{equation} 
  \label{GHfib} 
  \nameddright{\Omega(G/H)}{}{H}{}{G} 
\end{equation}  
with a homotopy equivalent one 
\begin{equation} 
  \label{CNfib} 
  \llnameddright{\prod(\fib(M(q_{i})))}{}{\prod M(A'_{i})}{\prod M(q_{i})} 
       {\prod M(A_{i})} 
\end{equation}  
using Cohen and Neisendorfer's construction of finite $H$-spaces~\cite{CN}
(see Theorem \ref{CN}). 
Here, (\ref{CNfib})~is an $H$-fibration with a different $H$-structure from 
that in~(\ref{GHfib}), but the maps $M(q_{i})$ are simple enough to allow us 
to identify their homotopy fibres. 

The paper is organized as follows. In Sections~$2$ through $4$ we 
obtain the homotopy fibration~(\ref{CNfib}) from~(\ref{GHfib}), and 
prove properties about it. In Section~$5$ we identify the maps $q_{i}$ 
in a case-by-case analysis, and thereby obtain a homotopy decomposition 
for $\Omega(G/H)$. In Section~$6$ we test the boundaries of our methods: 
examples are given to show that our methods can sometimes be extended 
to non-quasi-$p$-regular cases  and sometimes not; we also give an 
example to show that our loop space decompositions sometimes 
cannot be delooped. In Section~$7$ we use the homotopy decompositions 
of $\Omega(G/H)$ to deduce homotopy exponent bounds for $G/H$. 

The authors would like to thank the referee for suggesting improvements 
and for pointing out a mistake in an early version of the paper.

\section{A decomposition method} 

Let $G$ and $H$ be Lie groups and let 
\(\varphi\colon\namedright{H}{}{G}\) 
be a group homomorphism. 
In this section we describe a method 
for producing a homotopy decomposition of the homotopy fibre of
$\varphi$ when 
both $G$ and $H$ are quasi-$p$-regular. 
In the case when $\varphi$ is a group inclusion,
this gives a homotopy decomposition of $\Omega(G/H)$.
To do so, we first need some 
preliminary information. 

The following is a consequence of the James construction~\cite{J}. 
For a path-connected, pointed space~$X$, let 
\(E\colon\namedright{X}{}{\Omega\Sigma X}\) 
be the suspension map, which is adjoint to the identity map on $\Sigma X$. 

\begin{Thm} 
   \label{James} 
   Let $X$ be a path-connected space. Let $Y$ be a homotopy associative 
   $H$-space and suppose that there is a map 
   \(f\colon\namedright{X}{}{Y}\). 
   Then there is an extension 
   \[\diagram 
          X\rto^-{f}\dto^{E} & Y \\ 
         \Omega\Sigma X\urto_-{\overline{f}} & 
     \enddiagram\] 
   where $\overline{f}$ is an $H$-map and it is the unique $H$-map 
   (up to homotopy) with the property that $\overline{f}\circ E\simeq f$.~$\qqed$ 
\end{Thm}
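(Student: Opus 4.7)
The plan is to invoke the universal property of the James reduced product $J(X)$, which is the free topological monoid on $(X,\ast)$. A classical result of James asserts that for path-connected $X$ the natural inclusion $X\hookrightarrow J(X)$ extends through the suspension $E$ to a weak equivalence $J(X)\simeq\Omega\Sigma X$. So I can work with $J(X)$ in place of $\Omega\Sigma X$.

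First I would construct $\overline{f}$. Recall that $J(X)$ is the quotient of $\coprod_{n\geq 0}X^{n}$ by the relations that insert and delete the basepoint, and the monoid multiplication is concatenation. Given $f\colon X\to Y$ with $Y$ a homotopy associative $H$-space with multiplication $\mu$, define a map $f_{n}\colon X^{n}\to Y$ by choosing, once and for all, an iterated product of $\mu\circ(f\times\cdots\times f)$ with a fixed parenthesization; homotopy associativity of $\mu$ ensures that different parenthesizations produce homotopic maps, and that $f_{n}$ agrees up to homotopy with $f_{n+1}$ after inserting a basepoint in any slot. Passing to the quotient and the colimit gives a well-defined homotopy class $\overline{f}\colon J(X)\to Y$. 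By construction $\overline{f}\circ E\simeq f$, since $E$ factors as $X\hookrightarrow J(X)$ under the equivalence $J(X)\simeq\Omega\Sigma X$.

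Next I would verify that $\overline{f}$ is an $H$-map with respect to the loop multiplication on $\Omega\Sigma X$ (equivalently, the concatenation on $J(X)$) and $\mu$ on $Y$. On words $x_{1}\cdots x_{m}\in J(X)$ and $y_{1}\cdots y_{n}\in J(X)$ the two composites $\overline{f}\circ\mathrm{mult}$ and $\mu\circ(\overline{f}\times\overline{f})$ both land in the iterated $\mu$-product of the $f(x_{i})$ and $f(y_{j})$; these are homotopic by the same homotopy associativity argument used to define $\overline{f}$, and the homotopies assemble over the filtration of $J(X)\times J(X)$ by length to give the required $H$-map homotopy.

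For uniqueness, suppose $g\colon\Omega\Sigma X\to Y$ is another $H$-map with $g\circ E\simeq f$. Using the filtration of $J(X)$ by word length $J_{n}(X)$ and the fact that $J_{n}(X)/J_{n-1}(X)\simeq X^{\wedge n}$ (so the inclusion $J_{n-1}(X)\hookrightarrow J_{n}(X)$ is a cofibration), I would show by induction on $n$ that $g$ and $\overline{f}$ agree up to homotopy on $J_{n}(X)$: the base case $n=1$ is the hypothesis $g\circ E\simeq f\simeq\overline{f}\circ E$, and the inductive step uses the $H$-map property of both maps to extend the homotopy from $J_{n-1}(X)$ across the cofibre, since both $g$ and $\overline{f}$ send a word of length $n$ to the $\mu$-product of the values on its one-letter factors. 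The key subtlety, and what I expect to be the main obstacle, is organizing the homotopies coherently across the filtration so that the inductive extension genuinely yields a homotopy on the colimit; this is handled by the usual cofibration/HEP argument but requires care because the $H$-map condition is itself only up to homotopy.
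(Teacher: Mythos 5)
The paper states this theorem without proof, citing it as a consequence of the James construction \cite{J}, and your argument is precisely the standard one that the citation stands in for: extend $f$ over the word-length filtration of $J(X)\simeq\Omega\Sigma X$ using homotopy associativity of $Y$, and prove uniqueness of the $H$-extension by induction over the same filtration via the homotopy extension property. Your proposal is correct modulo the usual standing conventions (well-pointed CW spaces, a strict unit for $\mu$, which can always be arranged), so there is nothing to add.
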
 
 
Next, Cohen and Neisendorfer~\cite{CN} gave a construction of finite 
$p$-local $H$-spaces satisfying many useful properties. The ones we 
need are listed below. For a $\mathbb{Z}/p\mathbb{Z}$-vector space $V$, 
let $\Lambda(V)$ be the exterior algebra on $V$. Take homology with 
mod-$p$ coefficients. 

\begin{Thm} 
   \label{CN} 
   Fix a prime $p$. Let $\mathcal{C}_{p}$ be the collection of $CW$-complexes 
   consisting of $\ell$ odd dimensional cells, where $\ell<p-1$. 
   If $A\in\mathcal{C}_{p}$ then there is a finite $H$-space $M(A)$ with the 
   following properties: 
   \begin{itemize} 
      \item[(a)] there is an isomorphism of Hopf algebras 
                     $H_{\ast}(M(A))\cong\Lambda(\widetilde{H}_{\ast}(A))$; 
      \item[(b)] there are maps 
                      \(\nameddright{M(A)}{s}{\Omega\Sigma A}{\rho}{M(A)}\) 
                     such that $\rho\circ s$ is homotopic to the identity map on $M(A)$; 
      \item [(c)] the composite 
                      \(\nameddright{A}{E}{\Omega\Sigma A}{\rho}{M(A)}\) 
                      induces the inclusion of the generating set in homology. 
   \end{itemize} 
   Further, if $A,A',A''\in\mathcal{C}_{p}$ then: 
   \begin{itemize} 
      \item[(d)] a map 
                      \(f\colon\namedright{A'}{}{A}\) 
                      induces a map 
                      \(M(f)\colon\namedright{M(A')}{}{M(A)}\); 
      \item[(e)] the maps $\rho$ and $s$ in part~(b) are natural for maps 
                      \(f\colon\namedright{A'}{}{A}\); 
      \item[(f)] if there is a homotopy cofibration 
                      \(\nameddright{A'}{}{A}{}{A''}\) 
                      then there is a homotopy fibration 
                      \(\nameddright{M(A')}{}{M(A)}{}{M(A'')}\). 
   \end{itemize} 
   $\qqed$ 
\end{Thm}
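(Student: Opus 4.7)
\medskip
\noindent\textbf{Proof proposal.} The plan is to construct $M(A)$ as a natural multiplicative retract of $\Omega\Sigma A$, systematically exploiting the hypothesis $\ell<p-1$ to make every idempotent in the group algebra $\mathbb{F}_p[\Sigma_k]$ that will appear well defined.

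First I would apply the Bott--Samelson theorem to identify $H_*(\Omega\Sigma A)\cong T(V)$ as Hopf algebras, where $V:=\widetilde{H}_*(A)$ is a graded vector space concentrated in odd degrees of dimension $\ell$. Algebraically, the exterior algebra $\Lambda(V)$ is a natural Hopf-algebra retract of $T(V)$, cut out on each tensor power by the antisymmetrization idempotent $e_k=\frac{1}{k!}\sum_{\sigma\in\Sigma_k}\mathrm{sgn}(\sigma)\,\sigma$. The denominators $k!$ are invertible in $\mathbb{F}_p$ for every $k\leq\ell<p-1$, so $\Lambda(V)$ really is a Hopf-algebra summand of $T(V)$.

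Next I would realize this algebraic retraction geometrically. The James stable splitting $\Sigma\Omega\Sigma A\simeq\bigvee_{k\geq 1}\Sigma A^{\wedge k}$ reduces the construction of a stable retract of $\Omega\Sigma A$ to compatible choices of stable summands of the smash powers $A^{\wedge k}$, and the idempotents $e_k$ lift (in the manner of Selick--Wu) to idempotent self-maps of $A^{\wedge k}$ whose images have mod-$p$ homology $\Lambda^k(V)$. Summing over $k\leq\ell$ produces a stable retract of $\Omega\Sigma A$ with homology $\Lambda(V)$. To promote this to an honest $H$-space $M(A)$ together with the maps $s$ and $\rho$, I would appeal to Theorem~\ref{James} applied to the composite of $\namedright{A}{E}{\Omega\Sigma A}$ with the stable retraction, extending it multiplicatively, and then verify that the resulting self-map of $\Omega\Sigma A$ is an $H$-idempotent whose image is $M(A)$. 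Properties~(a)--(c) are then immediate from the construction, while~(d) and~(e) follow from the naturality of the James splitting and of the symmetric-group idempotents on smash powers.

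The hard part will be property~(f). Given a cofibration $A'\to A\to A''$ in $\mathcal{C}_p$, passing to reduced mod-$p$ homology yields a short exact sequence $V'\hookrightarrow V\twoheadrightarrow V''$ which splits as graded vector spaces, inducing an extension of Hopf algebras $\Lambda(V')\to\Lambda(V)\to\Lambda(V'')$ with $\Lambda(V)\cong\Lambda(V')\otimes\Lambda(V'')$ as coalgebras. I would use functoriality (properties~(d) and~(e)) to form the maps $\nameddright{M(A')}{}{M(A)}{}{M(A'')}$, lift the first into the homotopy fibre of the second using the $H$-structure, and show this lift induces an isomorphism on mod-$p$ homology. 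The subtle step is the Serre spectral sequence of $M(A)\to M(A'')$: the coalgebra splitting above forces collapse at $E_2$ by a rank comparison against $\Lambda(V)$, and since all three $M(\cdot)$ are $p$-local finite $H$-spaces, a mod-$p$ homology equivalence upgrades to a homotopy equivalence, producing the desired fibration.
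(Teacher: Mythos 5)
First, note that the paper does not prove this statement: Theorem~\ref{CN} is quoted verbatim from Cohen--Neisendorfer's \emph{A construction of $p$-local $H$-spaces} and is closed with a box precisely because its proof lies outside the paper. So you are really proposing a reconstruction of [CN], and your sketch, while it starts from the right algebra, has a genuine gap at the one point where all the work in [CN] actually happens.

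The gap is the passage from the stable/algebraic idempotent to an unstable $H$-space. Splitting $\Lambda(V)$ off $T(V)$ by the antisymmetrizers $e_k$ (legitimate, since $k!\in\F_p^{\times}$ for $k\le\ell<p-1$) and realizing these on the James filtration quotients $A^{\wedge k}$ only gives a \emph{stable} wedge summand of $\Sigma\Omega\Sigma A$; stable summands of $\Sigma\Omega\Sigma A$ do not desuspend to unstable retracts of $\Omega\Sigma A$, so at this stage you have no space $M(A)$, no map $s$, and no $H$-structure. Your proposed repair via Theorem~\ref{James} cannot work: an $H$-map out of $\Omega\Sigma A$ is the multiplicative extension of its restriction to $A$, and the multiplicative extension of $E\colon A\to\Omega\Sigma A$ is the identity. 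The idempotent you want, $T(V)\twoheadrightarrow\Lambda(V)\hookrightarrow T(V)$, is a coalgebra map but \emph{not} an algebra map (the inclusion $\Lambda(V)\hookrightarrow T(V)$ is not multiplicative because $T(V)$ is noncommutative), so it is not induced by any $H$-map and its homotopy-theoretic ``image'' is not automatically an $H$-space. Indeed the paper itself is careful on exactly this point: in the proof of Lemma~\ref{modcomm} it notes that $\rho$ is only known to be an $H$-map under a stronger cell bound (citing \cite{T}), and that $s$ need not be compatible with $E$ except modulo commutators. Cohen and Neisendorfer avoid this trap entirely: they build $M(A)$ by induction on the number of cells, realizing the cofibration that attaches a cell as a fibration $M(A')\to M(A)\to M(A'')$ of $H$-spaces constructed by obstruction theory, with the hypothesis $\ell<p-1$ used to kill the obstructions (via the low-dimensional $p$-primary homotopy of odd spheres), and the fibration in part~(f) is the construction itself rather than a consequence of functoriality plus a Serre spectral sequence argument as in your last paragraph. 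If you want to pursue the idempotent-splitting route, the correct framework is the functorial \emph{coalgebra} decomposition machinery of Selick--Wu, which is a genuinely different (and later) proof; but as written, your step from ``stable retract with homology $\Lambda(V)$'' to ``$H$-idempotent on $\Omega\Sigma A$ with image $M(A)$'' does not go through.
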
 

It will help to have some information about $s_{\ast}$. Let $a$ be 
the composite 
\[a\colon\nameddright{A}{E}{\Omega\Sigma A}{\rho}{M(A)}\] 
and let $\overline{E}$ be the composite 
\[\overline{E}\colon\nameddright{A}{a}{M(A)}{s}{\Omega\Sigma A}.\] 
It may not be the case that $\overline{E}$ is homotopic to $E$. 
However, we will show that they induce the same map in homology 
modulo commutators. Recall by the Bott Samelson Theorem that 
$H_{\ast}(\Omega\Sigma A)\cong T(\widetilde{H}_{\ast}(A))$, where 
$T(\ \ )$ is the free tensor algebra functor. It is well known that 
for a $\mathbb{Z}/p\mathbb{Z}$-vector space $V$ there is an 
algebra isomorphism $T(V)\cong UL\langle V\rangle$ where 
$L\langle V\rangle$ is the free Lie algebra generated by $V$ and 
$U$ is the universal enveloping algebra functor. Thus there is an 
algebra isomorphism 
$H_{\ast}(\Omega\Sigma A)\cong UL\langle\widetilde{H}_{\ast}(X)\rangle$. 

\begin{Lem} 
   \label{modcomm} 
   We have $(\overline{E})_{\ast}=E_{\ast}$ modulo commutators in 
   $UL\langle\widetilde{H}_{\ast}(X)\rangle$. 
\end{Lem}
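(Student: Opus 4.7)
The plan is to post-compose both $\overline{E}$ and $E$ with $\rho$, show the two composites are homotopic, and then identify $\rho_*$ with the abelianization map $UL\langle V\rangle\twoheadrightarrow\Lambda(V)$, where I write $V:=\widetilde{H}_{\ast}(A)$.

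The first step is straightforward: unwinding the definitions of $\overline{E}$ and $a$ and using $\rho\circ s\simeq\mathrm{id}$ from Theorem~\ref{CN}(b) gives
\[ \rho\circ\overline{E} \;=\; \rho\circ s\circ a \;=\; \rho\circ s\circ\rho\circ E \;\simeq\; \rho\circ E, \]
so $\rho_{\ast}\circ(\overline{E})_{\ast} = \rho_{\ast}\circ E_{\ast}$ as maps $V\to\Lambda(V)$.

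The second step requires more care. The Cohen--Neisendorfer construction realises $M(A)$ as an $H$-space retract of $\Omega\Sigma A$, so $\rho$ may be taken as an $H$-map and $\rho_{\ast}\colon UL\langle V\rangle\to\Lambda(V)$ is a morphism of graded algebras. Theorem~\ref{CN}(c) ensures that $\rho_{\ast}$ restricts to the identity on the generating submodule $V\subset UL\langle V\rangle$, carrying it onto the generators of $\Lambda(V)$. Since $V$ sits in odd degrees and $p$ is odd, $\Lambda(V)$ is canonically identified with $UL\langle V\rangle/[UL\langle V\rangle,UL\langle V\rangle]$, and the unique algebra map $UL\langle V\rangle\to\Lambda(V)$ that is the identity on generators is precisely this quotient. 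Hence $\ker\rho_{\ast} = [UL\langle V\rangle,UL\langle V\rangle]$. Combining with the first step, $(\overline{E})_{\ast}(v)-E_{\ast}(v)\in\ker\rho_{\ast}$ for every $v\in V$, which is the claim.

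The step I expect to require most scrutiny is confirming that $\rho$ can be treated as an $H$-map, since Theorem~\ref{CN} as quoted does not state this explicitly. If that is unavailable, the workaround is to replace $\rho$ throughout by the unique $H$-map extension $\overline{a}\colon\Omega\Sigma A\to M(A)$ of $a$ supplied by Theorem~\ref{James}; one only needs to re-derive the retraction $\overline{a}\circ s\simeq\mathrm{id}_{M(A)}$, which follows from the uniqueness clause of Theorem~\ref{James} applied to the identity $H$-map of $M(A)$ extending $a$.
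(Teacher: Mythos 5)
Your first step coincides with the paper's: from $\rho\circ s\simeq\mathrm{id}$ one gets $\rho\circ\overline{E}=\rho\circ s\circ a\simeq a$ and $\rho\circ E= a$, hence $\rho_{\ast}\circ\overline{E}_{\ast}=\rho_{\ast}\circ E_{\ast}$. The divergence, and the gap, is in your second step. The paper deliberately does \emph{not} assume that $\rho$ is an $H$-map: \cite{T} establishes this only for $\ell<p-2$, and the lemma is wanted beyond that range. Instead, the paper invokes the Cohen--Neisendorfer homology model of the fibration $\nameddright{F}{f}{\Omega\Sigma A}{\rho}{M(A)}$ as a short exact sequence of algebras $0\to U[L,L]\to UL\to UL_{ab}\to 0$ with $f_{\ast}=U(g)$; exactness then forces $E_{\ast}-\overline{E}_{\ast}$ to factor through $U(g)$, i.e.\ to vanish modulo commutators, with no $H$-map hypothesis on $\rho$ whatsoever. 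Your identification of $\ker\rho_{\ast}$ with the commutator ideal is correct \emph{when} $\rho$ is an $H$-map, so your argument does prove the lemma for $\ell<p-2$ (a range which happens to cover every application in this paper, since each $A_{i}$ used later has at most two cells and $p\geq 5$), but it does not prove the lemma in the generality stated.

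The proposed workaround does not close this gap. First, Theorem~\ref{James} requires the target to be homotopy associative, and homotopy associativity of $M(A)$ is itself only known from \cite{T} under restrictions on $\ell$ of the same kind you are trying to avoid. Second, and more seriously, the uniqueness clause of Theorem~\ref{James} cannot deliver $\overline{a}\circ s\simeq\mathrm{id}_{M(A)}$: that clause compares $H$-maps out of $\Omega\Sigma A$ whose restrictions along $E$ agree, whereas you need a uniqueness statement for maps out of $M(A)$ agreeing on $A$; moreover $s$ is not known to be an $H$-map, so $\overline{a}\circ s$ is not an $H$-map and no uniqueness clause applies to it. Trying instead to verify the retraction in homology is circular, since $s_{\ast}$ is precisely what the lemma is attempting to control.
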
 

\begin{proof} 
Since $s$ is a right homotopy inverse of $\rho$, we have 
$\rho\circ\overline{E}=\rho\circ s\circ a\simeq a$. By definition of $a$, 
we also have $\rho\circ E=a$. If $\ell<p-2$ then by~\cite{T}, $\rho$ is 
an $H$-map, so $\rho\circ(E-\overline{E})\simeq\rho\circ E-\rho\circ\overline{E}$ 
is null homotopic. However, we would also like the statement of the lemma 
to hold for $\ell=p-1$ so we argue without knowing whether 
$\rho\circ (E-\overline{E})$ is null homotopic. 

Define the space $F$ and the map $f$ by the homotopy fibration 
\[\nameddright{F}{f}{\Omega\Sigma A}{\rho}{M(A)}.\] 
By~\cite{CN}, this fibration is modelled in homology by the short exact 
sequence of algebras 
\[0\longrightarrow\nameddright{U[L,L]}{U(g)}{UL}{U(\mathrm{ab})}{UL_{ab}}\longrightarrow 0\] 
where $L$ is the free Lie algebra generated by $\widetilde{H}_{\ast}(A)$, 
$L_{ab}$ is the free abelian Lie algebra (that is, the bracket is identically 
zero) generated by $\widetilde{H}_{\ast}(A)$, $[L,L]$ is the Lie algebra 
kernel of the abelianization map 
\(\namedright{L}{\mathrm{ab}}{L_{ab}}\), 
and $g$ is the inclusion of $[L,L]$ into $L$. So 
$\rho_{\ast}\circ E_{\ast}=\rho_{\ast}\circ\overline{E}_{\ast}$ 
implies by exactness that $E_{\ast}-\overline{E}_{\ast}$ factors 
through $f_{\ast}=U(g)$. But as $g$ is the map sending commutators 
of $L$ into $L$, we obtain $E_{\ast}-\overline{E}_{\ast}=0$ modulo commutators. 
\end{proof} 

The following proposition is the key for decomposing $\Omega(G/H)$. 

\begin{Thm} 
   \label{decomp} 
   Let  
   \(\varphi\colon\namedright{H}{}{G}\) 
   be a homomorphism of Lie groups. Suppose that there is a homotopy 
   commutative diagram 
   \[\diagram 
          \bigvee_{i=1}^{t} A'_{i}\rrto^-{\bigvee_{i=1}^{t} q_{i}}\dto^{j'}  
               & & \bigvee_{i=1}^{t} A_{i}\dto^{j} \\ 
          H\rrto^-{\varphi} & & G 
     \enddiagram\]  
   where $A'_{i},A_{i}\in\mathcal{C}_{p}$ for $1\leq i\leq t$, there are Hopf 
   algebra isomorphisms 
   $H_{\ast}(H)\cong\Lambda(\widetilde{H}_{\ast}(\bigvee_{i=1}^{t} A'_{i}))$ and  
   $H_{\ast}(G)\cong\Lambda(\widetilde{H}_{\ast}(\bigvee_{i=1}^{t} A_{i}))$, 
   and $j',j$ induce the inclusions of the generating sets in homology.  
   Then there is a homotopy commutative diagram 
   \[\diagram 
          \prod_{i=1}^{t} M(A'_{i})\rrto^-{\prod_{i=1}^{t} M(q_{i})}\dto^{e'}  
               & & \prod_{i=1}^{t} M(A_{i})\dto^{e} \\ 
          H\rrto^-{\varphi} & & G 
     \enddiagram\] 
   where $e',e$ are homotopy equivalences. 
\end{Thm}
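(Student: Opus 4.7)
The plan is to construct $e$ and $e'$ by combining the James extension with the Cohen--Neisendorfer sections, verify that they are mod-$p$ homology equivalences via a coalgebra argument, and then check the homotopy commutativity by a naturality calculation together with uniqueness of the James extension.

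Write $A=\bigvee_{i=1}^{t}A_{i}$, $A'=\bigvee_{i=1}^{t}A'_{i}$, and $q=\bigvee_{i=1}^{t}q_{i}$. By Theorem \ref{James}, the maps $j$ and $j'$ extend to $H$-maps $\overline{j}\colon\Omega\Sigma A\to G$ and $\overline{j'}\colon\Omega\Sigma A'\to H$. Let $\mu\colon\prod_{i=1}^{t}\Omega\Sigma A_{i}\to\Omega\Sigma A$ send $(\omega_{1},\ldots,\omega_{t})$ to the loop product $\prod_{i}\Omega\Sigma(\iota_{i})(\omega_{i})$, where $\iota_{i}\colon A_{i}\hookrightarrow A$ is the wedge inclusion, and define $\mu'$ analogously. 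Set
\[e=\overline{j}\circ\mu\circ{\textstyle\prod_{i=1}^{t}} s_{i}\qquad\text{and}\qquad e'=\overline{j'}\circ\mu'\circ{\textstyle\prod_{i=1}^{t}} s'_{i},\]
using the right inverses $s_{i},s'_{i}$ from Theorem \ref{CN}(b).

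To verify that $e$ is a homotopy equivalence, I would show $e_{\ast}$ is an isomorphism on mod-$p$ homology. Both $H_{\ast}(\prod M(A_{i}))$ and $H_{\ast}(G)$ are finite-dimensional Hopf algebras isomorphic to $\Lambda(\widetilde{H}_{\ast}(A))$, and since each $A_{i}\in\mathcal{C}_{p}$ has only odd-dimensional cells and $p$ is odd, the primitive submodules of both coincide with $\widetilde{H}_{\ast}(A)$. The restriction of $e$ to the $i$-th axial factor is $\overline{j}\circ\Omega\Sigma(\iota_{i})\circ s_{i}$, which by James uniqueness is homotopic to $\overline{j_{i}}\circ s_{i}$ where $j_{i}=j\circ\iota_{i}$. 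Therefore for $x\in\widetilde{H}_{\ast}(A_{i})$ one has
\[e_{\ast}(a_{i\ast}(x))=(\overline{j_{i}})_{\ast}(\overline{E}_{\ast}(x))=(\overline{j_{i}})_{\ast}(E_{\ast}(x)+c),\]
with $c\in U[L,L]$ by Lemma \ref{modcomm}. Because $\overline{j_{i}}$ is an $H$-map and $H_{\ast}(G)$ is graded-commutative, $(\overline{j_{i}})_{\ast}$ annihilates every iterated Lie bracket of primitives, so $(\overline{j_{i}})_{\ast}(c)=0$ and $e_{\ast}(a_{i\ast}(x))=(j_{i})_{\ast}(x)$, the canonical inclusion of the generator prescribed by the hypothesis on $j$. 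Hence $e_{\ast}$ is a bijection on primitives. Although $e_{\ast}$ may fail to be an algebra map, it is automatically a coalgebra map, and a standard induction on degree using $\ker(e_{\ast}\otimes e_{\ast})=\ker e_{\ast}\otimes H_{\ast}(\prod M(A_{i}))+H_{\ast}(\prod M(A_{i}))\otimes\ker e_{\ast}$ shows that a coalgebra map of connected Hopf algebras which is an isomorphism on primitives is injective. Equality of (finite) dimensions then forces $e_{\ast}$ to be an isomorphism, so $e$ is a $p$-local homotopy equivalence, and similarly for $e'$.

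For the commutativity of the square, naturality of $s$ (Theorem \ref{CN}(e)) gives $s_{i}\circ M(q_{i})\simeq\Omega\Sigma(q_{i})\circ s'_{i}$, and naturality of loop multiplication with respect to the $H$-maps $\Omega\Sigma(q_{i})$ gives $\mu\circ\prod\Omega\Sigma(q_{i})\simeq\Omega\Sigma(q)\circ\mu'$, so
\[e\circ{\textstyle\prod} M(q_{i})\simeq\overline{j}\circ\Omega\Sigma(q)\circ\mu'\circ{\textstyle\prod} s'_{i}.\]
Both $\overline{j}\circ\Omega\Sigma(q)$ and $\varphi\circ\overline{j'}$ are $H$-maps $\Omega\Sigma A'\to G$ extending the homotopic composites $j\circ q$ and $\varphi\circ j'$ from the hypothesized wedge square, so the uniqueness clause of Theorem \ref{James} identifies them up to homotopy, yielding $e\circ\prod M(q_{i})\simeq\varphi\circ\overline{j'}\circ\mu'\circ\prod s'_{i}=\varphi\circ e'$. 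The main obstacle is the middle step: because $s_{i}$ is only a section of $\rho_{i}$ and not an $H$-map in general, $e$ fails to be an $H$-map and $e_{\ast}$ is not automatically an algebra map, so one cannot simply extend an isomorphism from primitives by multiplicativity. The crux is therefore to combine Lemma \ref{modcomm} with the graded-commutativity of $H_{\ast}(G)$ to pin down $e_{\ast}$ on primitives, and then to use the coalgebraic injectivity argument to propagate this to the whole Hopf algebra.
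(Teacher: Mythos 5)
Your proposal is correct and follows essentially the same route as the paper: extend $j,j'$ to $H$-maps via the James construction, assemble $e,e'$ from the Cohen--Neisendorfer sections and the loop multiplication, use Lemma~\ref{modcomm} together with the fact that $(\bar{j}\circ J)_{\ast}$ is an algebra map into a commutative algebra to identify $e_{\ast}$ on the generators with $j_{\ast}$, and obtain the commutativity of the square from naturality plus the uniqueness clause of Theorem~\ref{James}. The only (cosmetic) divergence is in the last step: the paper dualizes and uses that $e^{\ast}$ is an algebra map which is an isomorphism on indecomposables, whereas you stay in homology and run the dual coalgebra-injectivity induction on primitives followed by a dimension count; both are valid.
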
 

\begin{proof} 
First, since $H$ and $G$ are loop spaces, they are homotopy associative 
$H$-spaces, so Theorem~\ref{James} implies that the maps $j'$ and $j$ extend 
to $H$-maps 
\(\bar{j}'\colon\namedright{\Omega\Sigma(\bigvee_{i=1}^{t} A'_{i})}{}{H}\)  
and 
\(\bar{j}\colon\namedright{\Omega\Sigma(\bigvee_{i=1}^{t} A_{i})}{}{G}\). 
Since $\varphi$ is an $H$-map, the uniqueness statement in 
Theorem~\ref{James} implies that there is a homotopy commutative diagram 
\begin{equation} 
  \label{decompdgrm1} 
  \diagram 
      \Omega\Sigma(\bigvee_{i=1}^{t} A'_{i}) 
             \rrto^-{\Omega\Sigma(\bigvee_{i=1}^{t} q_{i})}\dto^{\bar{j}'} 
         & & \Omega\Sigma(\bigvee_{i=1}^{t} A_{i})\dto^{\bar{j}} \\ 
      H\rrto^-{\varphi} & & G. 
  \enddiagram 
\end{equation} 

Second, the inclusion of a wedge summand 
\(\namedright{A_{k}}{}{\bigvee_{i=1}^{t} A_{i}}\) 
induces a map 
\(\namedright{\Omega\Sigma A_{k}}{}{\Omega\Sigma(\bigvee_{i=1}^{t} A_{i})}\). 
The loop multiplication on $\Omega\Sigma(\bigvee_{i=1}^{t} A_{i})$ 
lets us take the product of these maps for $1\leq k\leq t$  
to obtain a map 
\(J\colon\namedright{\prod_{i=1}^{t}\Omega\Sigma A_{k}}{} 
    {\Omega\Sigma(\bigvee_{i=1}^{t} A_{i})}\). 
This construction is natural for a map 
\(\llnamedright{\bigvee_{i=1}^{t} A'_{i}}{\bigvee_{i=1}^{t} q_{i}}{\bigvee_{i=1}^{t} A_{i}}\), 
so we obtain a homotopy commutative diagram 
\begin{equation} 
  \label{decompdgrm2} 
  \diagram 
       \prod_{i=1}^{t}\Omega\Sigma A'_{i}\rrto^-{\prod_{i=1}^{t}\Omega\Sigma q_{i}}\dto^{J'} 
            & &  \prod_{i=1}^{t}\Omega\Sigma A_{i}\dto^{J} \\ 
       \Omega\Sigma(\bigvee_{i=1}^{t} A'_{i})\rrto^-{\Omega\Sigma(\bigvee_{i=1}^{t} q_{i})} 
            & & \Omega\Sigma(\bigvee_{i=1}^{t} A_{i}). 
  \enddiagram 
\end{equation} 

Third, since $A'_{i},A_{i}\in\mathcal{C}_{p}$, by Theorem~\ref{CN}~(b) there 
are maps 
\(s'_{i}\colon\namedright{M(A'_{i})}{}{\Omega\Sigma A'_{i}}\) 
and 
\(s_{i}\colon\namedright{M(A_{i})}{}{\Omega\Sigma A_{i}}\) 
which have left homotopy inverses. The naturality property in 
Theorem~\ref{CN}~(e) then implies that there is a homotopy commutative diagram 
\begin{equation} 
  \label{decompdgrm3} 
  \diagram 
      \prod_{i=1}^{t} M(A'_{i})\rrto^-{\prod_{i=1}^{t} M(q_{i})}\dto^{\prod_{i=1}^{t} s'_{i}} 
          & & \prod_{i=1}^{t} M(A_{i})\dto^{\prod_{i=1}^{t} s_{i}} \\ 
      \prod_{i=1}^{t}\Omega\Sigma A'_{i}\rrto^-{\prod_{i=1}^{t}\Omega\Sigma q_{i}} 
          & & \prod_{i=1}^{t}\Omega\Sigma A_{i}. 
  \enddiagram 
\end{equation} 

Let $e'$ and $e$ be the composites  
\[e'\colon\llnamedddright{\prod_{i=1}^{t} M(A'_{i})}{\prod_{i=1}^{t} s'_{i}} 
        {\prod_{i=1}^{t}\Omega\Sigma A'_{i}}{J'}{\Omega\Sigma(\bigvee_{i=1}^{t} A'_{i})} 
        {\bar{j}'}{H}\] 
\[e\colon\llnamedddright{\prod_{i=1}^{t} M(A_{i})}{\prod_{i=1}^{t} s_{i}} 
        {\prod_{i=1}^{t}\Omega\Sigma A_{i}}{J}{\Omega\Sigma(\bigvee_{i=1}^{t} A_{i})} 
        {\bar{j}}{G}.\] 
Then juxtaposing~(\ref{decompdgrm1}), (\ref{decompdgrm2}) and~(\ref{decompdgrm3}) 
we obtain a homotopy commutative diagram 
\[\diagram 
      \prod_{i=1}^{t} M(A'_{i})\rrto^-{\prod_{i=1}^{t} M(q_{i})}\dto^{e'} 
          & & \prod_{i=1}^{t} M(A_{i})\dto^{e} \\ 
       H\rrto^-{\varphi} & & G. 
  \enddiagram\] 

Finally, we show that $e'$ and $e$ are homotopy equivalences. By Whitehead's 
Theorem, it suffices to show that $e'$ and $e$ induce isomorphisms in homology 
or cohomology. Consider the restriction of $e$ to $\bigvee_{i=1}^{t} A_{i}$, 
that is, consider the composite 
\[\bigvee_{i=1}^{t} A_{i}\stackrel{\bigvee_{i=1}^{t} a_{i}}{\llarrow} 
        \llnamedddright{\prod_{i=1}^{t} M(A_{i})}{\prod_{i=1}^{t} s_{i}} 
        {\prod_{i=1}^{t}\Omega\Sigma A_{i}}{J}{\Omega\Sigma(\bigvee_{i=1}^{t} A_{i})} 
        {\bar{j}}{G}.\] 
By the definition of $a_{i}$ and Theorem~\ref{CN}~(c), $(a_{i})_{\ast}$ is the 
inclusion of the generating set into $H_{\ast}(M(A_{i})$. So if we can show 
that $(e\circ\bigvee_{i=1}^{t} a_{i})_{\ast}$ is the inclusion of the generating 
set into $H_{\ast}(G)$, then $e_{\ast}$ induces an isomorphism on generating 
sets. As $H_{\ast}(M(A))$ and $H_{\ast}(G)$ are primitively generated, dualizing 
to cohomology implies that $e^{\ast}$ is an isomorphism on generating sets. 
Therefore, as $e^{\ast}$ is an algebra map, it is an isomorphism in all degrees. 
The same argument holds for $e'$. 

It remains to show that $(e\circ\bigvee_{i=1}^{t} a_{i})_{\ast}$ is the inclusion 
of the generating set into $H_{\ast}(G)$. By definition of the map $\overline{E}$, we have 
$(\prod_{i=1}^{t} s_{i})\circ\bigvee_{i=1}^{t} a_{i}=\bigvee_{i=1}^{t}\overline{E}_{i}$. 
So by Lemma~\ref{modcomm}, modulo commutators in 
$H_{\ast}(\prod_{i=1}^{t}\Omega\Sigma A_{i})$, this map induces the 
same map in homology as $(\bigvee_{i=1}^{t} E_{i})_{\ast}$. Observe that 
$J$ is a product of $H$-maps and $\bar{j}$ is an $H$-map, so they induce 
algebra maps in homology. Therefore, as $H_{\ast}(G)$ is a commutative algebra, 
$(\bar{j}\circ J)_{\ast}$ sends all commutators in 
$H_{\ast}(\prod_{i=1}^{t}\Omega\Sigma A)$ to zero in $H_{\ast}(G)$. Thus 
$(\bar{j}\circ J\circ(\prod_{i=1}^{t} s_{i})\circ\bigvee_{i=1}^{t} a_{i})_{\ast} 
   =(\bar{j}\circ J\circ(\bigvee_{i=1}^{t} E_{i}))_{\ast}$. 
The left map in this equality is $(e\circ\bigvee_{i=1}^{t} a_{i})_{\ast}$. 
For the right map, by the definitions of $J$ and $\bar{j}$, the composite 
$\bar{j}\circ J\circ(\bigvee_{i=1}^{t} E_{i})\simeq j$. Thus 
$(e\circ\bigvee_{i=1}^{t} a_{i})_{\ast}=j_{\ast}$. By hypothesis, $j_{\ast}$ 
is the inclusion of the generating set into $H_{\ast}(G)$, and hence so 
is $(e\circ\bigvee_{i=1}^{t} a_{i})_{\ast}$.   
\end{proof}         

Let $\fib(M(q_{i}))$ be the homotopy fibre of the map 
\(\lnamedright{M(A'_{i})}{M(q_{i})}{M(A_{i})}\). 
The homotopy commutative diagram in Theorem~\ref{decomp} 
implies that there is a homotopy fibration diagram 
\[\diagram 
          \prod_{i=1}^{t} \fib(M(q_{i}))\rrto\dto^{\epsilon} 
               & & \prod_{i=1}^{t} M(A'_{i})\rrto^-{\prod_{i=1}^{t} M(q_{i})}\dto^{e'}  
               & & \prod_{i=1}^{t} M(A_{i})\dto^{e} \\ 
          \Omega(G/H)\rrto & & H\rrto^-{\varphi} & & G 
     \enddiagram\]   
for some induced map $\epsilon$ of fibres. Since $e',e$ are homotopy 
equivalences, the five-lemma implies that~$\epsilon$ is as well. Thus 
we obtain the following.       

\begin{Cor} 
   \label{decompcor} 
   There is a homotopy equivalence 
   \[\Omega(G/H)\simeq_{p}\prod_{i=1}^{t} \fib(M(q_{i})).\] 
   $\qqed$ 
\end{Cor}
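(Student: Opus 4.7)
The plan is to extend the homotopy commutative square of Theorem~\ref{decomp} into a ladder of homotopy fibrations by taking homotopy fibres of the two horizontal rows, and then to apply the five-lemma. In the setting of the corollary, $\varphi\colon H\to G$ is the inclusion of a Lie subgroup, so the standard principal bundle $H\to G\to G/H$ yields the homotopy fibration $\Omega(G/H)\to H\to G$, identifying the homotopy fibre of the bottom row as $\Omega(G/H)$.

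For the top row, since the homotopy fibre construction commutes with finite products, the homotopy fibre of $\prod_{i=1}^{t} M(q_{i})$ is $\prod_{i=1}^{t}\fib(M(q_{i}))$. Naturality of homotopy fibres, applied to the commutative square of Theorem~\ref{decomp}, then produces a map
\[\epsilon\colon \prod_{i=1}^{t}\fib(M(q_{i}))\longrightarrow \Omega(G/H)\]
fitting into a ladder of homotopy fibrations whose middle and right vertical maps are $e'$ and $e$, respectively.

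Since Theorem~\ref{decomp} provides that $e'$ and $e$ are $p$-local homotopy equivalences, comparing the two induced long exact sequences of homotopy groups and applying the five-lemma degree by degree shows that $\epsilon$ induces an isomorphism on all homotopy groups. As the spaces involved have the homotopy type of $CW$-complexes, Whitehead's theorem upgrades this to a $p$-local homotopy equivalence, yielding the claimed decomposition. There is no substantive obstacle at this stage, since all the real work has already been performed in Theorem~\ref{decomp}; the only point worth noting is that the naturality of the homotopy fibre and the five-lemma behave well with respect to finite products and to $p$-localization, both of which are standard.
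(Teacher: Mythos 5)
Your argument is exactly the one the paper gives: extend the square from Theorem~\ref{decomp} to a homotopy fibration diagram (with $\Omega(G/H)$ as the fibre of $H\to G$ and $\prod_{i=1}^{t}\fib(M(q_{i}))$ as the fibre of the product map), obtain the induced map $\epsilon$ of fibres, and conclude by the five-lemma that $\epsilon$ is an equivalence since $e'$ and $e$ are. The proposal is correct and matches the paper's proof.
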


\section{The quasi-$p$-regular case}  

In this section we aim towards Theorem~\ref{qpregAG}, which shows that 
if $H$ and $G$ are both quasi-$p$-regular and satisfy mild restrictions 
on the factors, then the hypotheses of Theorem~\ref{decomp} are satisfied. 
To do this, we first need to study properties of the factors. 

We begin by defining some spaces and maps
following \cite{MNT}. For $n\geq 2$, define 
the space $B(2n-1,2n+2p-3)$ by the homotopy pullback 
\[\diagram 
       S^{2n-1}\rto\ddouble & B(2n-1,2n+2p-3)\rto\dto 
           & S^{2n+2p-3}\dto^{\frac{1}{2}\alpha_{1}(2n)} \\ 
       S^{2n-1}\rto & O(2n+1)/O(2n-1)\rto & S^{2n}. 
  \enddiagram\] 
Notice that 
$H^{\ast}(B(2n-1,2n+2p-3))\cong\Lambda(x_{2n-1},x_{2n+2p-3})$ 
and $\mathcal{P}^{1}(x_{2n-1})=x_{2n+2p-3}$. In particular, 
$B(2n-1,2n+2p-3)$ is a three-cell complex. Let $A(2n-1,2n+2p-3)$ be the 
$(2n+2p-3)$-skeleton of $B(2n-1,2n+2p-3)$ and let 
\[i_{2n-1}\colon\namedright{A(2n-1,2n+2p-3)}{}{B(2n-1,2n+2p-3)}\] 
be the skeletal inclusion. Then $A(2n-1,2n+2p-3)$ is a two-cell complex 
consisting of the bottom two cells in $B(2n-1,2n+2p-3)$. Observe that 
$H^{\ast}(B(2n-1,2n+2p-3))\cong\Lambda(\widetilde{H}^{\ast}(A(2n-1,2n+2p-3))$. 

The space $B(2n-1,2n+2p-3)$ is analogous to $M(A(2n-1,2n+2p-3))$. 
It is introduced in addition to $M(A(2n-1,2n+2p-3))$ because the standard 
homotopy decompositions of Lie groups due to Mimura, Nishida and 
Toda~\cite{MNT} are given in terms of the $B's$, and these will be used 
subsequently as a starting point for producing alternative decomposition 
in terms of $M(A)$'s via Theorem~\ref{decomp}. For now, we note that 
the two are homotopy equivalent provided $p\geq 5$. (If $p=3$ then 
as $A(2n-1,2n+2p-3)$ has two cells, Theorem~\ref{CN} does not apply - 
that is, the space $M(A(2n-1,2n+2p-3))$ does not exist.) 

\begin{Lem} 
   \label{BMequiv} 
   Let $p\geq 5$. If $n\geq 2$ then there is a homotopy equivalence 
   \[M(A(2n-1,2n+2p-3))\simeq_{p} B(2n-1,2n+2p-3).\] 
\end{Lem}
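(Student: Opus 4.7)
The plan is to construct a direct map $\phi\colon M(A)\to B$ (writing $A=A(2n-1,2n+2p-3)$ and $B=B(2n-1,2n+2p-3)$ throughout) and to verify that $\phi$ is a mod~$p$ cohomology isomorphism.

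The starting point is that for $p\geq 5$ the space $B$ carries a homotopy associative $H$-structure; this is classical, since $B$ appears as a retract of a simply-connected simple Lie group in the Mimura--Nishida--Toda decomposition~\cite{MNT}. Granted this, Theorem~\ref{James} applied to the skeletal inclusion $i\colon A\hookrightarrow B$ yields an $H$-map extension $\bar{\imath}\colon\Omega\Sigma A\to B$ satisfying $\bar{\imath}\circ E\simeq i$. Define
\[\phi\colon\nameddright{M(A)}{s}{\Omega\Sigma A}{\bar{\imath}}{B},\]
where $s$ is the section from Theorem~\ref{CN}(b).

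The key computation is $(\phi\circ a)_*=i_*$, where $a=\rho\circ E\colon A\to M(A)$ is as in the setup for Lemma~\ref{modcomm}. By definition, $\phi\circ a=\bar{\imath}\circ\overline{E}$. Lemma~\ref{modcomm} gives $\overline{E}_*\equiv E_*$ modulo commutators in $H_*(\Omega\Sigma A)\cong UL\langle\widetilde{H}_*(A)\rangle$, and since $\bar{\imath}$ is an $H$-map into $B$ whose mod~$p$ homology $\Lambda(x_{2n-1},x_{2n+2p-3})$ is graded commutative, $\bar{\imath}_*$ annihilates every commutator. Hence $(\phi\circ a)_*=\bar{\imath}_*\circ E_*=i_*$, which is the inclusion of the generating set into $H_*(B)$.

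By Theorem~\ref{CN}(c), $a_*$ is the inclusion of the generating set into $H_*(M(A))$, so $\phi_*$ carries the two odd-dimensional generators of $H_*(M(A))$ bijectively onto those of $H_*(B)$. Dualizing, $\phi^*\colon H^*(B)\to H^*(M(A))$ is an algebra homomorphism between two copies of $\Lambda(x_{2n-1},x_{2n+2p-3})$ that is an isomorphism on the generators, and therefore an isomorphism in all degrees. By Whitehead's theorem for simply-connected $p$-local spaces, $\phi$ is a $p$-local homotopy equivalence. The main obstacle is the first step, namely supplying a homotopy associative $H$-structure on $B$; once that is in place, the argument is a compact reprise of the techniques used in Lemma~\ref{modcomm} and Theorem~\ref{decomp}.
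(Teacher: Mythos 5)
Your construction runs in the opposite direction from the paper's (you build $M(A)\to B$; the paper builds $B\to M(A)$), and the homological bookkeeping in your second and third paragraphs is sound — it is the same commutator-killing argument used in Theorem~\ref{decomp}. But the first step is a genuine gap, and you have identified it yourself without closing it. To apply Theorem~\ref{James} you need $B=B(2n-1,2n+2p-3)$ to be a \emph{homotopy associative} $H$-space, and "retract of a Lie group" does not deliver this: a retract of an $H$-space is an $H$-space (via $r\circ\mu\circ(s\times s)$), but homotopy associativity does not pass to retracts, since the induced multiplication involves $s\circ r$, which is not the identity. Worse, the standard reference for homotopy associativity of these rank-two complexes at $p\geq 5$ is precisely \cite{T}, whose argument goes through the identification $B\simeq M(A)$ — the very statement being proved — so invoking it here is circular. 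Finally, the lemma is asserted for all $n\geq 2$, whereas $B(2n-1,2n+2p-3)$ occurs as a factor of a quasi-$p$-regular Lie group only for a bounded range of $n$, so for general $n$ your proposed source of even an unstructured $H$-multiplication on $B$ is unavailable.

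There are two ways to repair this. One is to keep your direction but route the James extension through an ambient group: when $B$ is a retract of a quasi-$p$-regular $G$, extend \(\nameddright{A}{i}{B}{}{G}\) to an $H$-map \(\namedright{\Omega\Sigma A}{}{G}\) (Lie groups are honestly associative) and compose with the retraction \(\namedright{G}{}{B}\); the commutator argument still applies because the extension factors through an algebra map into the commutative algebra $H_{*}(G)$. This, however, still only covers the $n$ realized inside Lie groups. The paper avoids the issue entirely by constructing the equivalence the other way: since $p\geq 5$, McGibbon's splitting gives a left homotopy inverse \(t\colon\namedright{\Sigma B}{}{\Sigma A}\) of $\Sigma i$, and the composite \(e=\rho\circ\Omega t\circ E\colon\namedright{B}{}{M(A)}\) is checked to be a homology isomorphism using Theorem~\ref{CN}(c) and the coalgebra structure. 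That construction uses only the $H$-structures on $\Omega\Sigma A$ and $M(A)$, which exist by fiat, and works for every $n\geq 2$. You should adopt one of these fixes; as written, the existence of $\bar{\imath}$ is unjustified.
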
 

\begin{proof} 
For simplicity, let $A=A(2n-1,2n+2p-3)$, $B=B(2n-1,2n+2p-3)$ and 
\(i\colon\namedright{A}{}{B}\) 
be $i_{2n-1}$. Since $p\geq 5$, by~\cite{McGibbon} the top cell splits off $\Sigma B$, 
that is, $\Sigma i$ has a left homotopy inverse 
\(t\colon\namedright{\Sigma B}{}{\Sigma A}\). 
Consider the diagram 
\[\diagram 
       A\rto^-{E}\dto^{i} & \Omega\Sigma A\drdouble\dto^{\Omega\Sigma i} & & \\ 
       B\rto^-{E} & \Omega\Sigma B\rto^-{\Omega t} 
           & \Omega\Sigma A\rto^-{\rho} & M(A) 
  \enddiagram\] 
where $\rho$ is the map from Theorem~\ref{CN}. The left square homotopy 
commutes by the naturality of the suspension map $E$ and the triangle homotopy 
commutes since $t$ is a right homotopy inverse of $\Sigma i$. Let 
$e=\rho\circ\Omega\Sigma t\circ E$ be the composition along the 
bottom row. By Theorem~\ref{CN}~(c), $\rho\circ E$ induces the inclusion 
of the generating set in homology, so the homotopy commutativity of the 
preceding diagram implies that $e\circ i$ does as well. But $i$ is 
the inclusion of the $(2n+2p-3)$-skeleton, so it induces the inclusion 
of the generating set in homology. Thus $e_{\ast}$ is a self-map of 
$\Lambda(x_{2n-1},x_{2n+1p-3})$ which is an isomorphim on the 
generating set. As $e$ is a map of spaces, $e_{\ast}$ is a map 
of coalgebras, and any such map satisfies 
$\overline{\Delta}\circ e_{\ast}=(e_{\ast}\otimes e_{\ast})\circ\overline{\Delta}$, 
where $\overline{\Delta}$ is the reduced diagonal. Applying the 
reduced diagonal to the product class $x_{2n-1}\otimes x_{2n+2p-3}$ 
we immediately see that $e_{\ast}$ is also an isomorphism in degree~$4n+2p-4$. 
Thus $e_{\ast}$ is an isomorphism in all degrees and so $e$ is a 
homotopy equivalence. 
\end{proof} 

In what follows we need information about the homotopy sets 
$[A(2n-1,2n+2p-3,S^{2m-1}]$ and $[A(2n-1,2n+2p-3),B(2m-1,2m+2p-3)]$  
for various $n$ and $m$. We do this now, starting by listing some 
known homotopy group calculations. 

\begin{Lem}[Toda~\cite{Toda}] 
  \label{lem:sphere}
  \[
   \pi_{2m-1+t}(S^{2m-1})=
   \begin{cases}
   \Z/p\Z & t=2i(p-1)-1, 1 \le i \le p-1 \\
   \Z/p\Z & t=2i(p-1)-2, m \le i \le p-1 \\
   0 & \text{otherwise for } 1\le t \le 2p(p-1)-3 \end{cases}
  \] 
   $\qqed$ 
\end{Lem}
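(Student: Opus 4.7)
The plan is to derive the table from three standard $p$-local tools: Serre's splitting of even-dimensional loop spheres, the $p$-local EHP sequence, and the known low-dimensional $p$-primary stable stems.

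First, at odd primes Serre's equivalence $\Omega S^{2n} \simeq_p S^{2n-1} \times \Omega S^{4n-1}$ reduces the computation for even-dimensional spheres to that for odd-dimensional ones, so it suffices to handle $S^{2m-1}$. For the odd sphere I would invoke Toda's $p$-local James-type fibration and the resulting EHP long exact sequence, which relates $\pi_*(S^{2m-1})$ to the stable stems through the suspension $E$ and to the homotopy of a higher-dimensional sphere through the Hopf invariant $H$ and the connecting map $P$. In the stable range $t < 2m - 1$ the suspension $E$ is an isomorphism onto $\pi_t^S$, and the $p$-primary part of $\pi_t^S$ for $1 \leq t \leq 2p(p-1) - 3$ consists exactly of the $\alpha$-family: a single copy of $\Z/p\Z$ in each degree $t = 2i(p-1) - 1$ with $1 \leq i \leq p-1$, and nothing else. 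The upper cutoff is chosen precisely so that the first periodic element $\beta_1 \in \pi_{2p(p-1)-2}^S$ does not yet appear. This gives the first case of the lemma.

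The unstable contributions producing the second case arise when $t \geq 2m - 1$, where the suspension $E$ can fail to be an isomorphism. The EHP connecting map $P$ then transports stable $\alpha$-classes from the higher-dimensional sphere down to $\pi_*(S^{2m-1})$, and a degree count places the resulting classes in $\pi_{2m-1+t}(S^{2m-1})$ with $t = 2i(p-1) - 2$. The range constraint $m \leq i \leq p-1$ emerges from the numerical requirements that the source class lie in the stable range of the higher sphere and that we remain within our overall cutoff $t \leq 2p(p-1) - 3$. Verifying that the resulting short exact sequences extracted from the EHP sequence split—which follows from the sparsity of the $\alpha$-family within this range, since successive summands sit in distinct dimensions modulo $2(p-1)$—yields the second case exactly.

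The main obstacle is the careful bookkeeping of degrees and ranges, and the verification that no extensions arise and no other $p$-torsion appears in the stated range. These verifications form the technical content of Toda's original computation in \cite{Toda}, which the authors here invoke directly rather than re-derive.
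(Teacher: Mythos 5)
The paper offers no proof of this lemma at all---it is quoted directly from Toda's book, as the bracketed citation and the immediate $\qqed$ indicate. Your EHP/stable-stem outline is the standard route by which Toda establishes these groups, and since you too ultimately defer the technical verification to \cite{Toda}, your proposal is correct and in substance the same as what the paper does.
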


\begin{Lem}[Mimura-Toda~\cite{Mimura-Toda},Kishimoto~\cite{Kishimoto}] 
  \label{lem:MT}
  \[
   \pi_{3+t}(B(3,2p+1))=
   \begin{cases} \Z/p^2\Z & t=2i(p-1)-1, 2 \le i \le p-1 \\
   \Z_{(p)} & t=2p-2 \\
   0 & \text{otherwise for } 1\le t \le 2p(p-1)-3 \end{cases}
  \]
  \[
   \pi_{2m-1+t}(B(2m-1,2m+2p-3))=
   \begin{cases} 
   \Z/p^2\Z & t=2i(p-1)-1, 2 \le i \le p-1 \\
   \Z/p\Z & t=2i(p-1)-2, m \le i \le p-1 \\
   \Z_{(p)} & t=2p-2 \\
   0 & \text{otherwise for } 1\le t \le 2p(p-1)-3 \end{cases}
  \] 
   $\qqed$ 
\end{Lem}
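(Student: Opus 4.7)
The plan is to compute $\pi_*(B(2n-1,2n+2p-3))$ via the homotopy long exact sequence of the principal $S^{2n-1}$-fibration
\[S^{2n-1}\longrightarrow B(2n-1,2n+2p-3)\longrightarrow S^{2n+2p-3}\]
that arises as the top row of the defining pullback square. The crucial preliminary is to identify the classifying element $\chi\in\pi_{2n+2p-4}(S^{2n-1})$ of this fibration. The Stiefel fibration $S^{2n-1}\to O(2n+1)/O(2n-1)\to S^{2n}$ is, at any odd prime $p$, classified by a unit multiple of the identity in $\pi_{2n-1}(S^{2n-1})\cong\Z_{(p)}$ because its Euler number $2$ is a $p$-local unit; pulling back along $\tfrac{1}{2}\alpha_1(2n)$ therefore yields $\chi$ equal, up to a unit, to $\alpha_1(2n-1)$. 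Consequently the connecting homomorphism $\partial\colon\pi_k(S^{2n+2p-3})\to\pi_{k-1}(S^{2n-1})$ is, up to a unit, composition with $\alpha_1$.

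With $\partial$ identified I would substitute the sphere homotopy groups from Lemma~\ref{lem:sphere} into the long exact sequence and compute $\pi_k(B)$ degree by degree. At $t=2p-2$, the generator of $\pi_{2n+2p-3}(S^{2n+2p-3})\cong\Z_{(p)}$ maps under $\partial$ to the $\alpha_1$-generator of the $\Z/p\Z$ summand in $\pi_{2n+2p-4}(S^{2n-1})$; since the neighbouring groups vanish, $\pi_{2n+2p-3}(B)\cong\Z_{(p)}$. At $t=2i(p-1)-1$ for $2\le i\le p-1$, Toda's stable relation $\alpha_1\alpha_{i-1}=0$ makes $\partial$ vanish on the $\alpha_{i-1}$-class from the base, so the long exact sequence produces a short exact sequence
\[0\to\Z/p\Z\to\pi_k(B)\to\Z/p\Z\to 0\]
which must be shown to be nontrivial in order to yield $\Z/p^2\Z$. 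The $\Z/p\Z$ terms at $t=2i(p-1)-2$ appearing in the general $B(2m-1,2m+2p-3)$ formula arise from classes in $\pi_k(S^{2m-1})$ outside the image of $\partial$; their absence in the special case $B(3,2p+1)$ reflects that when $m=2$ an unstable composition $\alpha_1\circ\alpha_{i-1}\in\pi_*(S^3)$ is nonzero and kills the corresponding summand.

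The main obstacle is verifying that the $\Z/p^2\Z$ extension is nontrivial. My approach would use the Toda bracket $\langle p,\alpha_1,\alpha_{i-1}\rangle$, which represents a class of order $p^2$ in $\pi_k(B)$ and thereby witnesses nonsplitting. Equivalently, the Steenrod operation $\mathcal{P}^1(x_{2n-1})=x_{2n+2p-3}$ in $H^*(B)$ produces a nontrivial Bockstein-type obstruction to the splitting. Both approaches, together with the unstable composition analysis needed for the dichotomy at $t=2i(p-1)-2$, are carried out in detail in \cite{Mimura-Toda} and its refinement \cite{Kishimoto}, whose results we are citing here.
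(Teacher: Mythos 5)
The paper itself offers no argument here: the lemma is quoted from \cite{Mimura-Toda} and \cite{Kishimoto} and stamped with a $\qqed$, so you are attempting something the authors did not. Your general setup is right: the long exact sequence of $S^{2n-1}\to B(2n-1,2n+2p-3)\to S^{2n+2p-3}$, the identification of the connecting homomorphism on suspension classes as composition with the characteristic element $\alpha_1(2n-1)$ (the Euler number $2$ of $O(2n+1)/O(2n-1)\to S^{2n}$ cancelling the $\tfrac12$), and the $\Z_{(p)}$ at $t=2p-2$ as the kernel $p\Z_{(p)}$ of the surjection onto $\Z/p\Z\{\alpha_1\}$ are all correct.

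There is, however, a genuine inconsistency in your treatment of $B(3,2p+1)$. For $2\le i\le p-1$, Lemma~\ref{lem:sphere} gives $\pi_{2i(p-1)+2}(S^3)\cong\pi_{2i(p-1)+2}(S^{2p+1})\cong\pi_{2i(p-1)+1}(S^3)\cong\Z/p\Z$ (the last is the unstable class at $t=2i(p-1)-2$, $i\ge m=2$, the very exception recorded in Remark~\ref{htpygpremark}), while the flanking groups $\pi_{2i(p-1)+3}(S^{2p+1})$ and $\pi_{2i(p-1)+1}(S^{2p+1})$ vanish. The resulting exact sequence
\[
0\to\Z/p\Z\to\pi_{2i(p-1)+2}(B)\to\Z/p\Z\xrightarrow{\ \partial\ }\Z/p\Z\to\pi_{2i(p-1)+1}(B)\to 0
\]
involves a \emph{single} homomorphism $\partial$, namely $\alpha_{i-1}\mapsto\alpha_1(3)\circ\alpha_{i-1}(2p)$. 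Your derivation of $\Z/p^2\Z$ at $t=2i(p-1)-1$ requires $\partial=0$ (you invoke the stable relation $\alpha_1\alpha_{i-1}=0$), while your explanation of the vanishing at $t=2i(p-1)-2$ requires $\partial\neq0$ (you assert the unstable composite $\alpha_1\circ\alpha_{i-1}$ on $S^3$ is nonzero). These are contradictory claims about the same map, and the stable relation is inapplicable: the target $\pi_{2i(p-1)+1}(S^3)$ is nonzero precisely because it is an unstable group killed by suspension, so stable vanishing of the composite says nothing about $\partial$ there. Exactness forces $|\pi_{2i(p-1)+2}(B(3,2p+1))|=p\cdot|\pi_{2i(p-1)+1}(B(3,2p+1))|$, so the mechanism you describe cannot output $\Z/p^2\Z$ and $0$ in these two consecutive degrees; you must first pin down the single unstable composite $\alpha_1(3)\circ\alpha_{i-1}(2p)$, and then both degrees are determined together. (Your sketch is consistent for $m\ge3$, where $\alpha_1(2m-1)\circ\alpha_{i-1}$ does vanish and the nonsplitting follows from $\pm\alpha_i\in\langle\alpha_1,\alpha_{i-1},p\rangle$.) This is exactly the delicate point for which one must consult \cite{Mimura-Toda} and \cite{Kishimoto} rather than argue by analogy between the stable and unstable ranges.
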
 

\begin{Rem} 
\label{htpygpremark} 
Notice that if $0<t\leq 4p-6$ and $t$ is even then 
$\pi_{2m-1+t}(S^{2m-1})\cong 0$, except in the one case when $m=2$ 
and $\pi_{3+(4p-3)}(S^{3})\cong\Z/p\Z$. Also, if $0<t\leq 4p-6$ 
and $t$ is even then $\pi_{3+t}(B(3,2p+1))\cong 0$ and 
$\pi_{2m-1+t}(B(2m-1,2m+2p-3))\cong 0$. 
\end{Rem} 

\begin{Lem} 
   \label{ABfactor} 
   Let $2\leq m,n\leq p$. Select spaces $A_{m}$ and $B_{n}$ as follows: 
   \[A_{m}\in\{\ast, S^{2m-1}, A(2m-1,2m+2p-3)\}\]  
   \[B_{n}\in\{\ast,S^{2n-1},S^{2n+2p-3}, B(2n-1,2n+2p-3),B(2n+2p-3,2n+4p-5)\}.\]  
   Exclude the case when $A_{m}=A(2p-1,4p-3)$ and $B_{n}=S^{3}$. If $m\neq n$ then 
   $[A_{m},B_{n}]\cong 0$. 
\end{Lem}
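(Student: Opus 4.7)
The plan is to reduce the computation of $[A_m, B_n]$ to a small collection of homotopy groups of $B_n$ and then read off their vanishing from Lemmas~\ref{lem:sphere} and~\ref{lem:MT}. The case $A_m = \ast$ is immediate. For $A_m = S^{2m-1}$ one has $[A_m, B_n] = \pi_{2m-1}(B_n)$, which is a direct homotopy group calculation. For $A_m = A(2m-1, 2m+2p-3)$ I would invoke the Puppe exact sequence coming from the defining cofibration $S^{2m-1}\to A(2m-1,2m+2p-3)\to S^{2m+2p-3}$, namely
\[
 [S^{2m+2p-3},B_n]\longrightarrow [A(2m-1,2m+2p-3),B_n]\longrightarrow [S^{2m-1},B_n],
\]
as pointed sets. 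Exactness here is enough: if the two outer sets vanish, so does the middle. Thus in every case the task comes down to showing that both $\pi_{2m-1}(B_n)$ and $\pi_{2m+2p-3}(B_n)$ are zero.

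The remaining step is to step through the five shapes of $B_n$ and verify vanishing at these two degrees. The bottom cell of every candidate $B_n$ sits in an odd degree of the form $2n-1$ or $2n+2p-3$, and the target degrees $2m-1$ and $2m+2p-3$ are also odd, so the offset $t$ at which we must read the tables in Lemmas~\ref{lem:sphere} and~\ref{lem:MT} is always even. The only even-degree nonzero entries in those tables are at $t = 2i(p-1)-2$, with a lower bound on $i$ that depends on the space (namely $i\ge n$ for $S^{2n-1}$ and for $B(2n-1,2n+2p-3)$ with $n\ge 3$, and $i\ge n+p-1$ for the shifted spaces $S^{2n+2p-3}$ and $B(2n+2p-3,2n+4p-5)$), together with the entries at $t=2p-2$ for the $B$-spaces. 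Combining the hypothesis $m\ne n$ with $2\le m,n\le p$ gives $|m-n|\le p-2$, so the equation $t=2i(p-1)-2$ can only be solved when $i=2$, which forces $n=2$ and $m=p$; the equation $t=2p-2$ has no solution at all under the bounds.

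The main obstacle is to organise this case analysis cleanly and to observe a key asymmetry: the entry for $B(3,2p+1)$ in Lemma~\ref{lem:MT} does \emph{not} contain the even-degree $\mathbb{Z}/p\mathbb{Z}$ summands that appear in the $n\ge 3$ rows for $B(2n-1,2n+2p-3)$, so the surviving $i=2,\,n=2,\,m=p$ possibility actually produces a nonzero homotopy group only when $B_n=S^{3}$. This is precisely the pair excluded in the statement, and it explains why $B_n=B(3,2p+1)$ does not also need to be excluded. Once this observation is in place, the verification in each remaining configuration of $(A_m,B_n)$ is routine numerics, and Corollary-style exactness in the Puppe sequence delivers the conclusion $[A_m,B_n]\cong 0$.
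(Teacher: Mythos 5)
Your proof is correct and follows essentially the same route as the paper: reduce $[A_m,B_n]$ to the two homotopy groups $\pi_{2m-1}(B_n)$ and $\pi_{2m+2p-3}(B_n)$ via the Puppe sequence of the defining cofibration, then check that the (necessarily even) offsets into the Toda and Mimura--Toda tables miss every nonzero entry except $\pi_{4p-3}(S^{3})$, which is exactly the excluded pair. If anything you are slightly more careful than the paper, whose Remark~\ref{htpygpremark} overlooks the $\Z_{(p)}$ entries at $t=2p-2$ in Lemma~\ref{lem:MT}; your observation that the offset $t=2p-2$ is never attained when $m\neq n$ closes that small gap.
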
 

\begin{proof} 
If $A_{m}=\ast$ then we are done. Otherwise, 
the possible dimensions for the nontrivial cells of~$A_{m}$ are $2m-1$ 
and $2m+2p-3$. Observe that $\pi_{2m-1}(B_{n})=\pi_{2n-1+t}(B_{n})$ 
for $t=2m-2n$, and $\pi_{2m+2p-3}(B_{n})=\pi_{2n-1+t'}(B_{n})$ 
for $t'=2m+2p-2n-2$. In particular, both $t$ and $t'$ are even. Also, 
we may assume that $t,t'\geq 0$. As $m\neq n$ we obtain $t>0$, 
and as $2\leq m,n\leq p$, we also obtain $t'>0$. Finally, since 
$2\leq m,n\leq p$ we have $2m+2p\leq 4p$ and $2n\geq 4$. 
Thus $t<4p-6$ and $t'\leq 4p-6$. So by Remark~\ref{htpygpremark}, 
$\pi_{2m-1}(B_{n})\cong 0$ and, as the excluded case in the hypotheses 
rules out obtaining $\pi_{4p-3}(S^{3})$, we also have  
$\pi_{2m+2p-3}(B_{n})\cong 0$. 

Therefore, if $A_{m}=S^{2m-1}$ then $[A_{m},B_{n}]\cong 0$.  
If $A_{m}=A(2m-1,2m+2p-3)$ then the homotopy cofibration 
\(\nameddright{S^{2m-1}}{}{A_{m}}{}{S^{2m+2p-3}}\) 
implies that there is an exact sequence 
\[\nameddright{\pi_{2m+2p-3}(B_{n})}{}{[A_{m},B_{n}]}{}{\pi_{2m-1}(B_{n})}.\]
As the homotopy groups on the left and right are zero we obtain 
$[A_{m},B_{n}]\cong 0$. 
\end{proof} 

Return to Lie groups. Let $G$ be a simply-connected, simple 
compact Lie group which is quasi-$p$-regular. Then by~\cite{MNT} 
there is a homotopy equivalence 
\[G\simeq_{p}\prod_{m=2}^{p} B_{m}\] 
where $B_{m}$ is one of the following: 
\[B_{m}\in\{\ast, S^{2m-1}, B(2m-1,2m+2p-3), S^{2m+2p-3}, B(2m+2p-3,2m+4p-5)\}.\] 
Define $A_{m}$ by the corresponding list: 
\[A_{m}\in\{\ast, S^{2m-1}, A(2m-1,2m+2p-3), S^{2m+2p-3}, A(2m+2p-3,2m+4p-5)\}.\] 
Notice that in each case, $H^{\ast}(B_{m})\cong\Lambda(\widetilde{H}^{\ast}(A_{m}))$. 
Let $j$ be the composite 
\[j\colon\nameddright{\bigvee_{m=2}^{p} A_{m}}{}{\prod_{m=2}^{p} B_{m}}{\simeq}{G}\] 
where the left map is determined by the skeletal inclusion of $A_{m}$ 
into $B_{m}$. Then there is an isomorphism 
$H^{\ast}(G)\cong\Lambda(\widetilde{H}^{\ast}(\bigvee_{m=1}^{p} A_{m}))$ 
for which $j^{\ast}$ is the projection onto the generating set. 

Now suppose that $H=H_{1}\times H_{2}$ where $H_{1}$ and $H_{2}$ 
are simply-connected, simple compact Lie groups which are 
quasi-$p$-regular. (In theory, this could be generalized to a product 
of finitely many such Lie groups, but in practise two factors suffices. 
In fact, it will often be the case that $H_{1}$ is trivial.) By~\cite{MNT} there  
are homotopy equivalences 
\[H_{1}\simeq_{p}\prod_{m=2}^{p} B'_{m,1}\qquad 
      H_{2}\simeq_{p}\prod_{m=2}^{p} B'_{m,2}.\]  
This time we impose a more stringent condition than in the case of $G$. 
We demand that 
\begin{equation} 
  \label{B'restriction} 
  B'_{m,1},B'_{m,2}\in\{\ast, S^{2m-1}, B(2m-1,2m+2p-3)\}. 
\end{equation}   
Let $A'_{m,1},\ A'_{m,2}$ be the corresponding skeleta: 
\[A'_{m,1},A'_{m,2}\in\{\ast, S^{2m-1}, A(2m-1,2m+2p-3)\}.\] 
Let $B'_{m}=B'_{m,1}\times B'_{m,1}$ and $A'_{m}=A'_{m,1}\vee A'_{m,2}$. 
Then there is a homotopy equivalence  
\[H\simeq_{p}\prod_{m=2}^{p} B'_{m}\]
and a map 
\[j'\colon\nameddright{\bigvee_{m=2}^{p} A'_{m}}{}{\prod_{m=2}^{p} B'_{m}}{\simeq}{H}\] 
which induces the inclusion of the generating set in homology. 

\begin{Thm} 
   \label{qpregAG} 
   Let $G$ be a simply-connected, simple compact Lie group, let 
   $H=H_{1}\times H_{2}$ be a product of two such Lie groups, and let 
   \(\varphi\colon\namedright{H}{}{G}\). 
   be a homomorphism. Suppose that both $G$ and~$H$ are quasi-$p$-regular, 
   that the factors of $H$ satisfy~(\ref{B'restriction}), and that if 
   $A'_{m}$ has $A(2p-1,4p-3)$ as a wedge summand then $B_{2}\neq S^{3}$.  
   Then there is a homotopy commutative diagram 
   \[\diagram 
          \bigvee_{m=2}^{p} A'_{m}\rrto^-{\bigvee_{m=2}^{p} q_{m}}\dto^{j'} 
               & & \bigvee_{m=2}^{p} A_{m}\dto^{j} \\ 
          H\rrto^-{\varphi} & & G 
     \enddiagram\] 
   where $j'$ and $j$ induce the inclusions of the generating sets in homology. 
\end{Thm}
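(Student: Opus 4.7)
The plan is to analyze the composite $\varphi\circ j'\colon\bigvee_{m=2}^p A'_m\to G$ componentwise under the product decomposition $G\simeq_p\prod_{n=2}^p B_n$. Post-composing with the equivalence and projecting to each factor $B_n$ yields maps $\psi_{n,m}\colon A'_m\to B_n$ for every pair $(m,n)$. I will show that the off-diagonal components ($m\neq n$) are null homotopic, and that the diagonal components ($m=n$) factor through the skeletal inclusion $A_m\hookrightarrow B_m$, thereby defining the required maps $q_m\colon A'_m\to A_m$.

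First I would dispatch the off-diagonal components. Writing $A'_m=A'_{m,1}\vee A'_{m,2}$, restriction~(\ref{B'restriction}) ensures each $A'_{m,i}$ lies in the list $\{\ast,S^{2m-1},A(2m-1,2m+2p-3)\}$ to which Lemma~\ref{ABfactor} applies. The side-hypothesis that $B_2\neq S^3$ whenever $A(2p-1,4p-3)$ appears as a wedge summand of some $A'_m$ (which forces $m=p$) is exactly what rules out the single forbidden case of Lemma~\ref{ABfactor}. Consequently $[A'_{m,i},B_n]\cong 0$ for every $m\neq n$ and every $i$, so each $\psi_{n,m}$ is null homotopic.

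Next, for each $m$ I would factor the diagonal map $\psi_{m,m}\colon A'_m\to B_m$ through $A_m$. Since $A'_m$ has CW-dimension at most $2m+2p-3$, cellular approximation replaces $\psi_{m,m}$ by a map into the $(2m+2p-3)$-skeleton of $B_m$. Going through the five possibilities for $B_m$, this skeleton is always contained in $A_m$: it equals $A_m$ when $B_m$ is a point, a sphere, or $B(2m-1,2m+2p-3)$, and it equals the bottom cell $S^{2m+2p-3}$ of $A_m$ when $B_m=B(2m+2p-3,2m+4p-5)$. This yields $q_m\colon A'_m\to A_m$ with $(A_m\hookrightarrow B_m)\circ q_m\simeq\psi_{m,m}$.

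Finally, I would verify homotopy commutativity of the displayed diagram by comparing $\varphi\circ j'$ and $j\circ\bigvee_m q_m$ after projecting to each $B_n$, using the identification $[\bigvee_m A'_m,\prod_n B_n]\cong\prod_{m,n}[A'_m,B_n]$. On each $(m,n)$ slot the two composites agree: they are both null when $m\neq n$ (the first by the vanishing above, the second because $j$ sends $A_m$ to the basepoint of $B_n$ for $n\neq m$), and they both coincide with $\psi_{m,m}$ when $m=n$, by the defining property of $q_m$. The main obstacle is the case-by-case factorization step, together with checking that the hypothesis on $A(2p-1,4p-3)$ and $B_2$ aligns exactly with the exclusion in Lemma~\ref{ABfactor}.
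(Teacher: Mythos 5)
Your proposal is correct and follows essentially the same route as the paper: both arguments use Lemma~\ref{ABfactor} (with the $B_{2}\neq S^{3}$ hypothesis covering exactly its excluded case) to kill the off-diagonal components of $\varphi\circ j'$ under the decomposition $G\simeq_{p}\prod B_{n}$, and then a dimension/skeletal argument to factor each diagonal component $A'_{m}\to B_{m}$ through the inclusion $A_{m}\hookrightarrow B_{m}$. The only cosmetic difference is that the paper organizes the bookkeeping by restricting to one wedge summand $A'_{k}$ at a time and then wedging the resulting squares, whereas you work with the full matrix of components $[A'_{m},B_{n}]$ at once; the content is identical.
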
 

\begin{proof} 
First, consider the composite 
\[\theta_{k}\colon A'_{k}\hookrightarrow\namedddright{\bigvee_{m=2}^{p} A'_{m}}{j}{H} 
        {\varphi}{G}{\simeq}{\prod_{m=2}^{p} B_{m}}.\] 
By Lemma~\ref{ABfactor}, $[A'_{k},B_{m}]\cong 0$ unless $m=k$. Therefore $\theta_{k}$  
factors as the composite 
\[\namedddright{A'_{k}}{\lambda_{k}}{B_{k}}{\mbox{incl}}{\prod_{m=2}^{p} B_{m}} 
          {\simeq}{G}\] 
where $\lambda_{k}$ is the projection of $\theta_{k}$ onto $B_{k}$. 

Next, observe that if $B_{k}\in\{\ast,S^{2m-1},S^{2m+2p-3}\}$ then 
$A_{k}=B_{k}$, so $\lambda_{k}$ factors through the inclusion 
\(\namedright{A_{k}}{}{B_{k}}\) 
(which is the identity map). 
On the other hand, if $B_{k}=B(2m-1,2m+2p-3)$ or $B_{k}=B(2m+2p-3,2m+4p-5)$ 
then as the dimension of $A'_{k}$ is at most $2m+2p-3$, we have~$\lambda_{k}$ 
factoring through the skeletal inclusion 
\(\namedright{A_{k}}{}{B_{k}}\). 
Thus, in any case, $\lambda_{k}$ factors as a composite 
\[\namedright{A'_{k}}{q_{k}}{A_{k}}\hookrightarrow B_{k}\] 
for some map $q_{k}$. 

Putting this together, for each $2\leq k\leq p$ we obtain a homotopy 
commutative diagram 
\[\diagram 
        A'_{k}\rrto^-{q_{k}}\dto^{\mbox{incl}} & & A_{k}\rto^-{\mbox{incl}} 
             &  B_{k}\dto^{\mbox{incl}} \\ 
        \bigvee_{m=2}^{p} A'_{m}\rto^-{j'} & H\rto^-{\varphi} & G\rto^-{\simeq} 
             & \prod_{m=2}^{p} B_{m}.  
  \enddiagram\]  
Taking the wedge sum of these diagrams for $2\leq k\leq p$ and composing 
with the inverse equivalence 
\(\namedright{\prod_{m=2}^{p} B_{m}}{\simeq}{G}\) 
gives the diagram in the statement of the theorem. 
\end{proof} 

\begin{Rem} 
\label{S1variation} 
We will apply Theorem~\ref{qpregAG} in the case when $G/H$ is a 
symmetric space. This requires that we also consider the possibility 
that $H=S^{1}\times H_{2}$. Then $A=S^{1}\vee A'_{2}$, and as $G$ 
is simply-connected, the restriction of the composite 
\(\nameddright{A}{}{H}{\varphi}{G}\) 
to $S^{1}$ is null homotopic. We are left with the composite 
\(\nameddright{A'_{2}}{}{H}{\varphi}{G}\), 
to which Theorem~\ref{qpregAG} applies. We obtain 
a homotopy commutative diagram 
   \[\diagram 
          S^{1}\vee(\bigvee_{m=2}^{p} A'_{m})\rto^-{\mbox{pinch}}\dto^{j'} 
               & \bigvee_{m=2}^{p} A'_{m}\rto^-{\bigvee_{m=2}^{p} q_{m}}  
               & \bigvee_{m=2}^{p} A_{m}\dto^{j} \\ 
          H\rrto^-{\varphi} & & G 
     \enddiagram\] 
\end{Rem}

\section{Identifying the map $q_{m}$ and the homotopy fibre of $M(q_{m})$} 
\label{sec:identify} 

The next step is to try to identify the maps $q_{m}$ in Theorem~\ref{qpregAG} 
and the homotopy fibre of $M(q_{m})$. Since $j',j$ induce the inclusion 
of the generating set in homology, they induce the projection onto the 
generating set in cohomology. Thus $(q_{m})^{\ast}$ is determined by 
the map of indecomposable modules induced by 
\(\namedright{H}{\varphi}{G}\): 
\[Q\varphi^{\ast}\colon\namedright{QH^{\ast}{G}}{}{QH^{\ast}(H)}.\] 
Based on the calculations to come in the subsequent sections, we will 
consider several possibilities for $q_{m}$ with $(q_{m})^{\ast}\neq 0$. 
In Proposition~\ref{Mfibtype} we will show that this cohomology 
information is sufficient to determine the homotopy type of the fibre 
of $M(q_{m})$. 

At this point it is appropriate to notice that if $p=3$ then Theorem~\ref{CN} 
does not apply to the two cell complex $A(2n-1,2n+2p-3)$. That is, the 
space $M(A(2n-1,2n+2p-3))$ does not exist. To avoid this, from now on we 
will assume that all spaces and maps have been localized at a prime $p\geq 5$. 

We begin by listing eight types of maps:  
\[\begin{array}{l} 
     v_{1}\colon\namedright{A_{m}}{}{A_{m}} \\ 
     v_{2}\colon\namedright{S^{2m-1}}{}{A(2m-1,2m+2p-3)} \\ 
     v_{3}\colon\namedright{A(2m-1,2m+2p-3)}{}{S^{2m+2p-3}} \\ 
     v_{4}\colon\namedright{A(2m-1,2m+2p-3)}{}{A(2m+2p-3,2m+4p-5)} \\ 
     v_{5}\colon\namedright{S^{2m-1}\vee S^{2m-1}}{}{S^{2m-1}} \\ 
     v_{6}\colon\namedright{S^{2m-1}\vee S^{2m-1}}{}{A(2m-1,2m+2p-3)} \\ 
     v_{7}\colon\namedright{S^{2m-1}\vee A(2m-1,2m+2p-3)}{}{A(2m-1,2m+2p-3)} \\ 
     v_{8}\colon\namedright{A(2m-1,2m+2p-3)\vee A(2m-1,2m+2p-3)}{} 
               {A(2m-1,2m+2p-3)}. 
  \end{array}\] 
Here, $v_{1}$ is a homotopy equivalence, $v_{2}$ is the inclusion of the bottom 
cell, $v_{3}$is the pinch map to the top cell, $v_{4}$ is the composite of the 
pinch map to the top cell and the inclusion of the bottom cell, $v_{5}$~is 
a homotopy equivalence when restricted to each wedge summand, $v_{6}$ 
is the inclusion of the bottom cell on each wedge summand, $v_{7}$ is the 
inclusion of the bottom cell when restricted to~$S^{2m-1}$ and is a 
homotopy equivalence when restricted to $A_{m}$, and $v_{8}$ is a 
homotopy equivalence when restricted to each copy of $A_{m}$.  

Apply the functor $M$ in Theorem~\ref{CN} to the maps $v_{1}$ 
to $v_{8}$. Using the facts that $M(S^{2n-1})\simeq_{p} S^{2n-1}$ and 
$M(X\vee Y)\simeq_{p} M(X)\times M(Y)$, we obtain maps: 
\[\begin{array}{l} 
     M(v_{1})\colon\namedright{M(A_{m})}{}{M(A_{m})} \\ 
     M(v_{2})\colon\namedright{S^{2m-1}}{}{M(A(2m-1,2m+2p-3))} \\ 
     M(v_{3})\colon\namedright{M(A(2m-1,2m+2p-3))}{}{S^{2m+2p-3}} \\ 
     M(v_{4})\colon\namedright{M(A(2m-1,2m+2p-3))}{}{M(A(2m+2p-3,2m+4p-5))} \\ 
     M(v_{6})\colon\namedright{S^{2m-1}\times S^{2m-1}}{}{S^{2m-1}} \\ 
     M(v_{6})\colon\namedright{S^{2m-1}\times S^{2m-1}}{}{M(A(2m-1,2m+2p-3))} \\ 
     M(v_{7})\colon\namedright{S^{2m-1}\times M(A(2m-1,2m+2p-3))}{} 
               {M(A(2m-1,2m+2p-3))} \\ 
     M(v_{8})\colon\namedright{M(A(2m-1,2m+2p-3))\times M(A(2m-1,2m+2p-3))}{} 
               {M(A(2m-1,2m+2p-3))}. 
  \end{array}\] 
Let $\fib(M(v_{i}))$ be the homotopy fibre of $M(v_{i})$. In Lemma~\ref{Mfibtype} 
we identify the homotopy type of $\fib(M(v_{i})$ for $1\leq i\leq 8$. First we 
need a preliminary lemma, which holds integrally or $p$-locally.  

\begin{Lem} 
   \label{fibrefact} 
   Suppose that there are maps 
   \(\nameddright{X}{f}{Y}{g}{Z}\) 
   where $Y$ and $Z$ are $H$-spaces and $g$ is an $H$-map. Let 
   $h=g\circ f$. If $m$ is the multiplication on $Z$, we obtain a composite 
   \[h\cdot g\colon\nameddright{X\times Y}{h\times g}{Z\times Z}{m}{Z}.\] 
   Let $F$ be the homotopy fibre of $g$. Then the homotopy fibre of $h\cdot g$ 
   is homotopy equivalent to $X\times F$. 
\end{Lem}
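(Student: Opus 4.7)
The plan is to exhibit a self-homotopy-equivalence of $X\times Y$ that converts $h\cdot g$ into the much simpler composite $g\circ\pi_{2}$, where $\pi_{2}\colon X\times Y\to Y$ is projection onto the second factor. Since the homotopy fibre of $g\circ\pi_{2}$ is obviously $X\times F$ (pull the homotopy fibration $F\to Y\to Z$ back along $\pi_{2}$), the lemma reduces to constructing such an equivalence.

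Assuming $Y$ is a path-connected (hence grouplike) $H$-space, I would pick a homotopy inverse $\iota_{Y}\colon Y\to Y$ and define the shear
\[\Phi\colon X\times Y\longrightarrow X\times Y,\qquad \Phi(x,y)=(x,\iota_{Y}(f(x))\cdot y).\]
For each fixed $x$, left translation by $\iota_{Y}(f(x))$ is a self-homotopy-equivalence of $Y$, so $\Phi$ is a fibrewise self-homotopy-equivalence over $X$ and hence a homotopy equivalence by Dold's theorem on fibrewise equivalences.

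Next, using that $g$ is an $H$-map (and hence $g\circ\iota_{Y}\simeq\iota_{Z}\circ g$ for a chosen homotopy inverse $\iota_{Z}$ on $Z$), one computes
\[(h\cdot g)\circ\Phi(x,y)=h(x)\cdot g(\iota_{Y}(f(x))\cdot y)\simeq h(x)\cdot\iota_{Z}(h(x))\cdot g(y)\simeq g(y),\]
so that $(h\cdot g)\circ\Phi\simeq g\circ\pi_{2}$. Passing to homotopy fibres on both sides then yields $\fib(h\cdot g)\simeq\fib(g\circ\pi_{2})\simeq X\times F$, as required.

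The main obstacle is justifying that $\Phi$ is a homotopy equivalence; this requires $Y$ to be grouplike so that left translations are self-equivalences, which is automatic in all the applications since $Y$ will be a Lie group or a loop space. A subsidiary point is the identity $g\circ\iota_{Y}\simeq\iota_{Z}\circ g$, which follows formally from $g$ being an $H$-map between grouplike $H$-spaces.
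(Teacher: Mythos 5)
Your proof is correct and is essentially the paper's argument: both identify $h\cdot g$ with $g\circ\pi_{2}$ via a shear self-equivalence of $X\times Y$ and then read off the fibre. The only difference is direction — the paper uses the forward shear $\theta(x,y)=(x,f(x)\cdot y)$, so that $(h\cdot g)\simeq g\circ\pi_{2}\circ\theta$ follows from $g$ being an $H$-map alone, which sidesteps your need for homotopy inverses $\iota_{Y},\iota_{Z}$ and the implicit regrouping $h(x)\cdot\iota_{Z}(h(x))\cdot g(y)\simeq g(y)$ that quietly uses homotopy associativity of $Z$.
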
 

\begin{proof} 
There is a homotopy equivalence 
\(\theta\colon\namedright{X\times Y}{}{X\times Y}\) 
given by sending $(x,y)$ to $(x,\mu(x,y))$ where $\mu$ is the 
multiplication on $Y$. As $g$ is an $H$-map, $h\cdot g$ is homotopic 
to the composite 
\(\psi\colon\namedddright{X\times Y}{\theta}{X\times Y}{\pi_{2}}{Y}{g}{Z}\), 
where $\pi_{2}$ is the projection onto the second factor. The homotopy 
fibre of $\psi$ is clearly $X\times F$, and so this is also the homotopy fibre 
of $h\cdot g$. 
\end{proof}

\begin{Lem} 
   \label{Mfibtype} 
   Let $p\geq 5$. The following hold: 
   \begin{itemize} 
      \item[(1)] $\fib(M(v_{1}))\simeq_{p}\ast$; 
      \item[(2)] $\fib(M(v_{2}))\simeq_{p}\Omega S^{2m+2p-3}$; 
      \item[(3)] $\fib(M(v_{3}))\simeq_{p} S^{2m-1}$; 
      \item[(4)] $\fib(M(v_{4}))\simeq_{p} S^{2m-1}\times\Omega S^{2m+4p-5}$; 
      \item[(5)] $\fib(M(v_{5}))\simeq_{p} S^{2m-1}$; 
      \item[(6)] $\fib(M(v_{6}))\simeq_{p} S^{2m-1}\times\Omega S^{2m+2p-3}$; 
      \item[(7)] $\fib(M(v_{7}))\simeq_{p} S^{2m-1}$; 
      \item[(8)] $\fib(M(v_{8}))\simeq_{p} M(A(2m-1,2m+2p-3))\simeq B(2m-1,2m+2p-3)$. 
   \end{itemize} 
\end{Lem}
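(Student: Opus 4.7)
Plan: The proof splits into three groups of cases.

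Cases (1)--(3) follow from a single application of Theorem~\ref{CN}(f). Case (1) is immediate because $M$ preserves homotopy equivalences. For (2) and (3), Theorem~\ref{CN}(f) applied to the cofibration $S^{2m-1}\to A(2m-1,2m+2p-3)\to S^{2m+2p-3}$ produces a homotopy fibration $S^{2m-1}\to M(A(2m-1,2m+2p-3))\to S^{2m+2p-3}$ whose inclusion is $M(v_2)$ and whose projection is $M(v_3)$. Extending one step to the left gives $\fib(M(v_2))\simeq\Omega S^{2m+2p-3}$, and $\fib(M(v_3))\simeq S^{2m-1}$ is immediate.

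Cases (5)--(8) are applications of Lemma~\ref{fibrefact}. Using $M(X\vee Y)\simeq_{p} M(X)\times M(Y)$ together with the functoriality of $M$, each $M(v_k)$ for $k\in\{5,6,7,8\}$ has the form $(x,y)\mapsto a(x)\cdot b(y)$ on a product, where $\cdot$ is the $H$-multiplication on the codomain and $a,b$ are among $\{\mathrm{id},M(v_2)\}$ (both of which are $H$-maps). Lemma~\ref{fibrefact} then identifies the homotopy fibre as $X\times F_b$. Reading off each case yields $S^{2m-1}$ for (5) and (7), $S^{2m-1}\times\Omega S^{2m+2p-3}$ for (6) (invoking case (2) for $\fib(M(v_2))$), and $M(A(2m-1,2m+2p-3))\simeq B(2m-1,2m+2p-3)$ for (8) (invoking Lemma~\ref{BMequiv}).

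Case (4) is the central difficulty. Factor $v_4=v_2'\circ v_3$, where $v_2'$ is the bottom cell inclusion $S^{2m+2p-3}\to A(2m+2p-3,2m+4p-5)$. Theorem~\ref{CN}(f) applied to both underlying cofibrations produces fibrations $S^{2m-1}\to M(A)\to S^{2m+2p-3}$ and $S^{2m+2p-3}\to M(A')\to S^{2m+4p-5}$, and $M(v_4)$ is their composite. The standard identification of the homotopy fibre of a composition yields a homotopy fibration
\[ S^{2m-1}\to\fib(M(v_4))\to\Omega S^{2m+4p-5}. \]
Split this as a product by constructing a section. Let $\delta\colon\Omega S^{2m+4p-5}\to S^{2m+2p-3}$ be the connecting map of the second fibration, so $M(v_2')\circ\delta\simeq\ast$. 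Lift $\delta$ through $M(v_3)\colon M(A)\to S^{2m+2p-3}$: the obstruction on the bottom cell is represented by $\alpha_1\circ\alpha_1\in\pi_{2m+4p-7}(S^{2m-1})$, which vanishes for $p\geq 5$ by Lemma~\ref{lem:sphere} (the group is zero for $m\geq 3$; for $m=2$ the class dies by the stable relation $\alpha_1^2=0$). Higher cells are handled by cellular obstruction theory, exploiting the sparsity of $p$-primary homotopy of $S^{2m-1}$ provided by Lemma~\ref{lem:sphere}. A lift $\tilde\delta\colon\Omega S^{2m+4p-5}\to M(A)$ composes with $\fib(M(v_4))\to M(A)$ to give a section of the displayed fibration, and together with the canonical inclusion $S^{2m-1}\hookrightarrow\fib(M(v_4))$ produces the product decomposition.

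The main obstacle will be the obstruction-theoretic step in case (4): while the bottom-cell obstruction is immediately killed by $\alpha_1^2=0$ and Lemma~\ref{lem:sphere}, one must carefully verify that no secondary obstruction survives on the higher cells of $\Omega S^{2m+4p-5}$ when building the section inductively.
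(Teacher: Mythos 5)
Your treatment of parts (1)--(3) and (5)--(8) coincides with the paper's: (1) is immediate, (2) and (3) come from the fibration $\nameddright{S^{2m-1}}{}{M(A(2m-1,2m+2p-3))}{}{S^{2m+2p-3}}$ supplied by Theorem~\ref{CN}(f), and (5)--(8) are instances of Lemma~\ref{fibrefact}. The substantive issue is part (4), where your argument has a genuine gap that you yourself flag but do not close.

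You propose to obtain the section of $\fib(M(v_{4}))\to\Omega S^{2m+4p-5}$ by lifting the connecting map $\delta$ through $\namedright{M(A(2m-1,2m+2p-3))}{M(v_{3})}{S^{2m+2p-3}}$ cell by cell. The bottom-cell obstruction is indeed $\alpha_{1}\circ\alpha_{1}=0$, but the higher obstructions lie in $\pi_{j(2m+4p-6)-1}(S^{2m-1})$ for $j\geq 2$, i.e.\ in $\pi_{(2m-1)+t}(S^{2m-1})$ with $t=(j-1)2m+j(4p-6)$, and Lemma~\ref{lem:sphere} does not dispose of these: its range $t\leq 2p(p-1)-3$ already fails to contain the $j=2$ obstruction when $p=5$ and $m=p$ (there $t=2m+8p-12=38>37$), and for $j\geq 3$ the dimensions lie far outside the range even for $m=2$, where the $p$-primary homotopy of $S^{2m-1}$ is certainly not all trivial. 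So ``cellular obstruction theory plus sparsity'' cannot be completed from the cited input; this is exactly the unresolved ``secondary obstruction'' you mention. The paper's proof avoids the issue entirely: having lifted $\partial\circ E$ on the single bottom cell $S^{2m+4p-6}$ to a map $\lambda$ into $M(A(2m-1,2m+2p-3))$, it uses homotopy associativity of $M(A(2m-1,2m+2p-3))$ (from~\cite{T}) together with Theorem~\ref{James} to extend $\lambda$ multiplicatively to an $H$-map $\gamma$ defined on all of $\Omega S^{2m+4p-5}$ in one step, and the uniqueness clause of Theorem~\ref{James} identifies $M(v_{3})\circ\gamma$ with $\partial$ since both are $H$-maps agreeing on the bottom cell; no higher cells need to be examined. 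You should adopt this mechanism. Note also that to convert the section plus the fibre inclusion $S^{2m-1}\to\fib(M(v_{4}))$ into a product decomposition you must multiply the two maps, which requires $\fib(M(v_{4}))$ to be an $H$-space; the paper secures this by arranging the defining pullback to be a diagram of $H$-spaces and $H$-maps (again via~\cite{T}), a point missing from your write-up.
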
 

\begin{proof} 
Since $v_{1}$ is a homotopy equivalence, it induces an isomorphism 
in homology, which implies by Theorem~\ref{CN}~(a) that $M(v_{1})$ 
also induces an isomorphism in homology and so is a homotopy 
equivalence. It follows that $\fib(M(v_{1}))\simeq_{p}\ast$, proving part~(1). 

By Theorem~\ref{CN}~(f), the homotopy cofibration 
\(\nameddright{S^{2m-1}}{}{A(2m-1,2m+2p-3)}{}{S^{2m+2p-3}}\) 
induces a homotopy fibration 
\(\nameddright{S^{2m-1}}{}{M(A(2m-1,2m+2q-3))}{}{S^{2m+2p-3}}\). 
We immediately obtain $\fib(M(v_{2}))\simeq_{p}\Omega S^{2m+2p-3}$ 
and $\fib(M(v_{3}))\simeq_{p} S^{2m-1}$, proving parts~(2) and~(3). 

For part~(4), since $v_{4}$ is the composite 
\[\nameddright{A(2m-1,2m+2p-3)}{v_{3}}{S^{2m+2p-3}}{v_{2}} 
     {A(2m+2p-3,2m+4p-5)}\]  
the naturality property in Theorem~\ref{CN} implies that $M(v_{4})$ is 
homotopic to the composite  
\[\llnameddright{M(2m-1,2m+2p-3)}{M(A(v_{3})}{S^{2m+2p-3}}{M(v_{2}))} 
     {M(A(2m+2p-3,2m+4p-5))}.\]  
Further, by~\cite{T}, the maps $M(v_{2})$ and $M(v_{3})$ are $H$-maps 
so we obtain a homotopy pullback of $H$-spaces and $H$-maps 
\begin{equation} 
  \label{alphalift} 
  \diagram 
        S^{2m-1}\rto\ddouble & X\rto\dto & \Omega S^{2m+4p-5}\dto^{\partial} \\ 
        S^{2m-1}\rto & M(A(2m-1,2m+2p-3))\rto^-{M(v_{2})}\dto^{M(v_{4})} 
              & S^{2m+2p-3}\dto^{M(v_{3})} \\ 
        & M(A(2m+2p-3,2m+4p-5))\rdouble & M(A(2m+2p-3,2m+4p-5)) 
  \enddiagram 
\end{equation} 
which defines the $H$-space $X$ and the $H$-map $\partial$. Note 
that $X\simeq_{p}\fib(M(v_{4}))$. 
In general, the attaching map for the $(2n+2p-3)$-cell in 
$M(A(2n-1,2n+2p-3))$ is $\alpha_{1}$, so the fibration connecting map 
\(\partial\colon\namedright{\Omega S^{2n+2p-3}}{}{S^{2n-1}}\) 
satisfies $\partial\circ E\simeq\alpha_{1}$. In our case, after looping~(\ref{alphalift}), 
we obtain a composite of connecting maps 
\(\nameddright{\Omega^{2} S^{2m+4p-5}}{\Omega\partial}{\Omega S^{2m+2p-3}} 
      {\partial'}{S^{2m-1}}\) 
where the homotopy fibre of $\partial'$ is $\Omega M(v_{2})$. We have 
$\partial'\circ\Omega\partial\circ E^{2}\simeq\alpha_{1}\circ\alpha_{1}$, 
which is null homotopic by~\cite{Toda}. Thus $\partial'\circ\Omega\partial\circ E^{2}$ 
lifts through $\Omega M(v_{2})$. Adjointing, this implies that 
$\partial\circ E$ lifts through $M(v_{2})$ to a map 
\(\lambda\colon\namedright{S^{2m+4p-6}}{}{M(A(2m-1,2m+2p-3))}\). 
By~\cite{T}, $M(A(2m-1,2m+2p-3))$ is homotopy associative, so by 
Theorem~\ref{James}, $\lambda$ extends to an $H$-map 
\[\gamma\colon\namedright{\Omega S^{2m+4p-5}}{}{M(A(2m-1,2m+2p-3))},\] 
and as $M(v_{2})$ is an $H$-map, the uniqueness property of 
Theorem~\ref{James} implies that $M(v_{2})\circ\gamma\simeq\partial$. 
The pullback property of $X$ therefore implies that $\gamma$ pulls back 
to a map 
\(\namedright{\Omega S^{2m+4p-5}}{}{X}\) 
which is a right homotopy inverse for 
\(\namedright{X}{}{\Omega S^{2m+4p-5}}\). 
Since $X$ is an $H$-space, this section implies that there is a homotopy 
equivalence $X\simeq_{p} S^{2m-1}\times\Omega S^{2m+4p-5}$. 

Parts~(5) through~(8) are all special cases of Lemma~\ref{fibrefact}. 
\end{proof} 

Next, we aim to show that if $(q_{m})^{\ast}\neq 0$ in cohomology then $q_{m}$ 
can be described in terms of the maps $v_{1}$ to $v_{8}$. 

\begin{Lem} 
   \label{qID1} 
   Let 
   \(q_{m}\colon\namedright{A'_{m}}{}{A_{m}}\) 
   be a map as in Theorem~\ref{qpregAG} and suppose that, in cohomology, 
   $(q_{m})^{\ast}\neq 0$. Write $u$ for an arbitrary unit in $\mathbb{Z}_{(p)}$. 
   Then the following hold: 
   \begin{itemize} 
      \item[(1)] if $A'_{m}=A_{m}$ then $q_{m}$ is a homotopy equivalence; 
      \item[(2)] if $A'_{m}=S^{2m-1}$ and $A_{m}=A(2m-1,2m+2p-3)$ then 
                      $q_{m}\simeq u\cdot v_{2}$; 
      \item[(3)] if $A'_{m}=A(2m-1,2m+2p-3)$ and $A_{m}=S^{2m+2p-3}$ 
                      then $q_{m}\simeq u\cdot v_{3}$; 
      \item[(4)] if $A'_{m}=A(2m-1,2m+2p-3)$ and $A_{m}=A(2m+2p-3,2m+4p-5)$ 
                      then $q_{m}\simeq u\cdot v_{4}$. 
   \end{itemize} 
\end{Lem}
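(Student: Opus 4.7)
The plan is to identify $q_{m}$ up to a unit of $\Z_{(p)}$ in each case by first computing the homotopy set $[A'_{m},A_{m}]$ and then using $(q_{m})^{\ast}\neq 0$ to pick out the correct multiple of a canonical generator. In all four cases the cohomology of $A'_{m}$ and $A_{m}$ has at most two non-zero generators, linked by $\mathcal{P}^{1}$ when the two-cell complex is involved, and each of the maps $v_{2},v_{3},v_{4}$ is designed so that it induces a non-trivial map on exactly the matched pair of cohomology classes. Consequently, once $[A'_{m},A_{m}]$ is shown to be a cyclic $\Z_{(p)}$-module generated by the relevant~$v_{i}$, the cohomology hypothesis will force $q_{m}\simeq u\cdot v_{i}$.

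For case~(1), I would argue directly that a self-map $q_{m}$ with $(q_{m})^{\ast}\neq 0$ must be a $p$-local homotopy equivalence via Whitehead's theorem. When $A_{m}$ is a sphere this is immediate since $[S^{n},S^{n}]\cong\Z_{(p)}$ with the homotopy class detected by the cohomology degree. When $A_{m}=A(2n-1,2n+2p-3)$ (applicable either with $n=m$ or with $n=m+p-1$, covering $A(2m+2p-3,2m+4p-5)$), the cohomology is $\Lambda(x_{2n-1},x_{2n+2p-3})$ with $\mathcal{P}^{1}x_{2n-1}=x_{2n+2p-3}$; writing $(q_{m})^{\ast}x_{2n-1}=a\,x_{2n-1}$ and $(q_{m})^{\ast}x_{2n+2p-3}=b\,x_{2n+2p-3}$, naturality of $\mathcal{P}^{1}$ forces $a=b$, and the hypothesis $(q_{m})^{\ast}\neq 0$ makes this common scalar a unit in $\Z_{(p)}$, so $(q_{m})^{\ast}$ is an isomorphism and $q_{m}$ is the desired equivalence.

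For case~(2), $A'_{m}=S^{2m-1}$, so $[A'_{m},A_{m}]=\pi_{2m-1}(A(2m-1,2m+2p-3))$. The long exact sequence of the cofibration $S^{2m-1}\to A(2m-1,2m+2p-3)\to S^{2m+2p-3}$ reduces this to $\pi_{2m-1}(S^{2m-1})\cong\Z_{(p)}$ once one notes that $\pi_{2m-1}(S^{2m+2p-3})=\pi_{2m}(S^{2m+2p-3})=0$ for dimensional reasons, and the generator is exactly the bottom cell inclusion $v_{2}$. For cases~(3) and~(4), I would apply the Puppe sequence of the cofibration $S^{2m-1}\to A(2m-1,2m+2p-3)\to S^{2m+2p-3}$ with target $A_{m}$, namely
\[ [\Sigma S^{2m-1},A_{m}]\to [S^{2m+2p-3},A_{m}]\to [A'_{m},A_{m}]\to [S^{2m-1},A_{m}]. \]
In case~(3), $A_{m}=S^{2m+2p-3}$, and the first, third, and fourth terms vanish by dimension (Lemma~\ref{lem:sphere}), leaving $[A'_{m},A_{m}]\cong[S^{2m+2p-3},S^{2m+2p-3}]\cong\Z_{(p)}$, with generator obtained by precomposing the identity with the pinch map, i.e.\ $v_{3}$. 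In case~(4), $A_{m}=A(2m+2p-3,2m+4p-5)$; the cofibration defining $A_{m}$ combined with Lemma~\ref{lem:sphere} yields $\pi_{2m-1}(A_{m})=\pi_{2m}(A_{m})=0$ and $\pi_{2m+2p-3}(A_{m})\cong\Z_{(p)}$ generated by the bottom cell inclusion, so once again $[A'_{m},A_{m}]\cong\Z_{(p)}$, now generated by the composite $v_{4}$ of pinch and inclusion.

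The main routine obstacle will be the careful bookkeeping needed to verify that all the auxiliary sphere-to-sphere and sphere-to-two-cell homotopy groups appearing in these long exact and Puppe sequences really do vanish. The hypotheses $2\leq m\leq p$ and $p\geq 5$ keep everything comfortably in the metastable range where Lemma~\ref{lem:sphere} controls the answers, and the dimension gaps between the cells of $A'_{m}$ and $A_{m}$ in each case are exactly large enough for the potentially obstructing terms to be zero.
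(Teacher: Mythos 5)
Your proposal is correct and follows essentially the same route as the paper: part (1) via the $\mathcal{P}^{1}$-linkage of the two generators plus Whitehead, part (2) via connectivity/Hurewicz, and parts (3)--(4) via the Puppe exact sequence of the cofibration $S^{2m-1}\to A(2m-1,2m+2p-3)\to S^{2m+2p-3}$ with the outer terms vanishing by connectivity, so that $[A'_{m},A_{m}]\cong\Z_{(p)}$ is generated by $v_{3}$ (resp.\ $v_{4}$) and the nonvanishing of $(q_{m})^{\ast}$ picks out a unit multiple. The only cosmetic slip is the reference to a ``long exact sequence of the cofibration'' for homotopy groups in case (2), where the honest justification is the Hurewicz theorem (or Blakers--Massey), exactly as the paper states.
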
 

\begin{proof} 
For part~(1), if $A'_{m}=A_{m}$ equals $\ast$ or $S^{2m-1}$ then the 
assertion is clear. If they both equal $A(2m-1,2m+2p-3)$ then recall 
that $H^{\ast}(A_{m})=\Z/p\Z\{x_{2m-1},x_{2m+2p-3}\}$ and 
$\mathcal{P}^{1}(x_{2m-1})=x_{2m+2p-3}$. This Steenrod operation 
implies that if $(q_{m})^{\ast}$ is nonzero on either generator then 
it is nonzero on both. Consequently, $(q_{m})^{\ast}$ is an isomorphism 
and so $q_{m}$ is a homotopy equivalence. 

Part~(2) is a consequence of the Hurewicz Theorem. 

For parts~(3) and (4), observe that there is a homotopy cofibration sequence 
\[\llnamedddright{S^{2m-1}}{i}{A(2m-1,2m+2p-3)}{q}{S^{2m+2p-3}} 
     {\alpha_{1}(2m)}{S^{2m}}\] 
where $i$ is the inclusion of the bottom cell and $q$ is the pinch map onto 
the top cell. For any space~$X$, we obtain an induced exact sequence 
\[\namedddright{\pi_{2m}(X)}{}{\pi_{2m+2p-3}(X)}{q^{\ast}}{[A(2m-1,2m+2p-3),X]} 
       {i_{\ast}}{\pi_{2m-1}(X)}.\] 
Taking $X=S^{2m+2p-3}$ or $X=A(2m+2p-3,2m+4p-5)$, by connectivity 
$\pi_{2m}(X)\cong\pi_{2m-1}(X)\cong 0$, so $q^{\ast}$ is an isomorphism.  
The Hurewicz Theorem implies in either case that $\pi_{2m+2p-3}(X)$ is isomorphic  
to $H^{\ast}(X)$. Therefore, in both cases, the homotopy class of~$q_{m}$ 
is determined by its image in cohomology, and the assertions follow. 
\end{proof} 

Arguing as for Lemma~\ref{qID1} we also obtain the following. 
 
\begin{Lem} 
   \label{qID2} 
   Let 
   \(q_{m}\colon\namedright{A'_{m,1}\vee A'_{m,2}}{}{A_{m}}\) 
   be a map as in Theorem~\ref{qpregAG} and suppose that, in cohomology, 
   $(q_{m})^{\ast}\neq 0$ when projected to either $H^{\ast}(A'_{m,1})$ or 
   $H^{\ast}(A'_{m,2})$. Write $u,u'$ for arbitrary units in $\mathbb{Z}_{(p)}$. 
   Then the following hold: 
   \begin{itemize} 
      \item[(5)] if $A'_{m,1}=A'_{m,2}=S^{2m-1}$ and $A_{m}=S^{2m-1}$ 
                      then $q_{m}\simeq u\vee u'$ is a wedge sum of homotopy 
                      equivalences; 
      \item[(6)] if $A'_{m,1}=A'_{m,2}=S^{2m-1}$ and $A_{m}=A(2m-1,2m+2p-3)$ 
                      then $q_{m}\simeq u\cdot v_{2}\vee u'\cdot v_{2}$; 
      \item[(7)] if $A'_{m,1}=S^{2m-1}$, $A'_{m,2}=A(2m-1,2mp+2p-3)$ and 
                      $A_{m}=A(2m-1,2m+2p-3)$ then $q_{m}\simeq u\cdot v_{2}\vee e'$ 
                      where $e'$ is a homotopy equivalence; 
      \item[(8)] if $A'_{m,1}=A'_{m,2}=A(2m-1,2mp+2p-3)$ and 
                      $A_{m}=A(2m-1,2m+2p-3)$ then $q_{m}\simeq e\vee e'$ 
                      where $e,e'$ are homotopy equivalences. 
   \end{itemize} 
   $\qqed$ 
\end{Lem}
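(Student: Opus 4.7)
The plan is to reduce Lemma~\ref{qID2} to Lemma~\ref{qID1} by restricting $q_{m}$ to each wedge summand separately. A map out of a wedge $A'_{m,1}\vee A'_{m,2}$ is determined up to homotopy by its restrictions to the two summands, and in cohomology the restriction of $(q_{m})^{\ast}$ to the summand $A'_{m,k}$ is exactly the projection onto $H^{\ast}(A'_{m,k})$. Therefore the hypothesis that $(q_{m})^{\ast}$ is nonzero when projected to $H^{\ast}(A'_{m,k})$ for $k=1,2$ says precisely that the restriction
\[q_{m,k}\colon\namedright{A'_{m,k}}{}{A_{m}}\]
satisfies $(q_{m,k})^{\ast}\neq 0$.

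With this reduction in hand, I would argue case by case. In each of cases (5)--(8) the pair $(A'_{m,k},A_{m})$ matches one of the pairs appearing in Lemma~\ref{qID1}, so each $q_{m,k}$ is identified by the corresponding part of that lemma. Explicitly, for case~(5) each $q_{m,k}\colon\namedright{S^{2m-1}}{}{S^{2m-1}}$ is a homotopy equivalence by Lemma~\ref{qID1}(1); for case~(6) each $q_{m,k}\colon\namedright{S^{2m-1}}{}{A(2m-1,2m+2p-3)}$ is a unit multiple of $v_{2}$ by Lemma~\ref{qID1}(2); for case~(7) the first restriction is a unit multiple of $v_{2}$ by Lemma~\ref{qID1}(2) and the second is a homotopy equivalence by Lemma~\ref{qID1}(1); and for case~(8) both restrictions are homotopy equivalences by Lemma~\ref{qID1}(1). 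Taking the wedge sum of the restrictions then identifies $q_{m}$ as claimed.

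I do not expect any genuine obstacle, since the only content is the observation that wedge decompositions of the source reduce the problem to the one-summand case treated in Lemma~\ref{qID1}, combined with the fact that the cohomological hypothesis on $q_{m}$ passes to each summand. The tag $\qqed$ already present in the statement signals that the authors themselves regard the argument as essentially immediate.
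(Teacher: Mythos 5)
Your proposal is correct and is essentially the argument the paper intends: the paper omits the proof entirely, saying only ``Arguing as for Lemma~\ref{qID1} we also obtain the following,'' and your reduction --- using that a based map out of $A'_{m,1}\vee A'_{m,2}$ is determined up to homotopy by its restrictions to the summands, that the cohomological hypothesis passes to each restriction, and then quoting the relevant parts of Lemma~\ref{qID1} --- is exactly the right way to carry that out.
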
 

Lemmas~\ref{qID1} and~\ref{qID2} identify $q_{m}$ in terms of the maps $v_{i}$, 
up to multiplication by units in $\mathbb{Z}_{(p)}$ or homotopy equivalences. 
Thus $M(q_{m})$ can similarly be written in terms of the maps $M(v_{i})$. 
As multiplication by a unit in $\mathbb{Z}_{(p)}$ or composition with a homotopy 
equivalence does not affect the homotopy type of the fibre, the homotopy fibre 
of $M(q_{m})$ has the same homotopy type as the homotopy fibre of the 
corresponding $M(v_{i})$'s. So Lemma~\ref{Mfibtype} implies the following. 

\begin{Prop} 
   \label{MqID} 
   Let $p\geq 5$ and let 
   \(q_{m}\colon\namedright{A'_{m}}{}{A_{m}}\) 
   be a map as in Theorem~\ref{qpregAG}. If $(q_{m})^{\ast}\neq 0$, then - 
   listing cases as in Lemmas~\ref{qID1} and~\ref{qID2} - the homotopy 
   fibre of $M(q_{m})$ is as follows: 
   \begin{itemize} 
      \item[(1)] $\fib(M(q_{m}))\simeq_{p}\ast$; 
      \item[(2)] $\fib(M(q_{m}))\simeq_{p}\Omega S^{2m+2p-3}$; 
      \item[(3)] $\fib(M(q_{m}))\simeq_{p} S^{2m-1}$; 
      \item[(4)] $\fib(M(q_{m}))\simeq_{p} S^{2m-1}\times\Omega S^{2m+4p-5}$; 
      \item[(5)] $\fib(M(q_{m}))\simeq_{p} S^{2m-1}$; 
      \item[(6)] $\fib(M(q_{m}))\simeq_{p} S^{2m-1}\times\Omega S^{2m+2p-3}$; 
      \item[(7)] $\fib(M(q_{m}))\simeq_{p} S^{2m-1}$; 
      \item[(8)] $\fib(M(q_{m}))\simeq_{p} M(A(2m-1,2m+2p-3))\simeq_{p} B(2m-1,2m+2p-3)$. 
   \end{itemize} 
   $\qqed$ 
\end{Prop}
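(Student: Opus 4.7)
The plan is to assemble the proposition by combining the identifications of $q_{m}$ in Lemmas~\ref{qID1} and~\ref{qID2} with the fibre computations in Lemma~\ref{Mfibtype}, relying on two general principles: (i) if $f\simeq g$ then $\fib(f)\simeq\fib(g)$, and (ii) composing a map on either side with a homotopy equivalence does not alter the homotopy type of its fibre. Applying these, whenever Lemmas~\ref{qID1} and~\ref{qID2} write $q_{m}$ as $u\cdot v_{i}$ (or a wedge of such maps with units $u,u'$ and equivalences $e,e'$), functoriality of $M$ turns this into a description of $M(q_{m})$ as the same unit-scalar multiple, or wedge, of the maps $M(v_{i})$.

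First I would dispatch the single-summand cases (1)--(4) coming from Lemma~\ref{qID1}. In case~(1), $q_{m}$ is an equivalence, so $M(q_{m})$ is too and its fibre is contractible, matching Lemma~\ref{Mfibtype}(1). In cases (2), (3), (4), Lemma~\ref{qID1} gives $q_{m}\simeq u\cdot v_{i}$ for $i=2,3,4$ respectively. Multiplication by a unit $u\in\Z_{(p)}$ is a self-equivalence of $S^{2m-1}$, $S^{2m+2p-3}$, or of $A(2m-1,2m+2p-3)$ (via the degree $u$ map, which is an isomorphism on mod-$p$ cohomology and hence an equivalence by the argument in Lemma~\ref{BMequiv}). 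Applying $M$ and using naturality (Theorem~\ref{CN}(d)--(e)), we see $M(q_{m})$ factors as $M(v_{i})$ pre- or post-composed with an equivalence. Hence $\fib(M(q_{m}))\simeq_{p}\fib(M(v_{i}))$, and Lemma~\ref{Mfibtype} yields the claimed fibres.

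Next I would handle cases (5)--(8) from Lemma~\ref{qID2}. Here $A'_{m}=A'_{m,1}\vee A'_{m,2}$ and $q_{m}$ is identified as a wedge. Since $M$ sends wedges to products, up to equivalence $M(A'_{m})\simeq_{p} M(A'_{m,1})\times M(A'_{m,2})$. Under this identification, $M$ applied to a wedge of maps into $A_{m}$ becomes the composite of the product with the loop multiplication on $M(A_{m})$, which is exactly the setup matching the maps $M(v_{5}),\ldots,M(v_{8})$ listed before Lemma~\ref{Mfibtype}. Absorbing units and equivalences as in the previous paragraph, we reduce each case to computing $\fib(M(v_{i}))$ for $i=5,6,7,8$, and again Lemma~\ref{Mfibtype} supplies the answer.

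The main obstacle, such as it is, lies in the bookkeeping: verifying that the maps $u\cdot v_{i}$ really do correspond under $M$ to the $M(v_{i})$ up to equivalence, which requires appealing to the naturality of $M$ and to the fact that a unit multiple on a source or target in $\mathcal{C}_{p}$ is a homotopy equivalence there. Once this is in place, each of the eight cases is a one-line transcription from Lemma~\ref{Mfibtype}. No new input beyond Lemmas~\ref{Mfibtype}, \ref{qID1}, \ref{qID2} and the functoriality of $M$ is required.
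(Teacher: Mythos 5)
Your proposal is correct and follows the paper's own argument: the paper likewise combines Lemmas~\ref{qID1} and~\ref{qID2} with Lemma~\ref{Mfibtype}, using functoriality of $M$ and the fact that composing with unit multiples or homotopy equivalences does not change the homotopy type of the fibre. No substantive difference in approach.
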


\section{Case by case analysis}
In this section, we give homotopy decompositions of $\Omega(G/H)$ 
when $G$ is quasi-$p$-regular using a case by case analysis.
Note that when $G$ is quasi-$p$-regular, $H$ is automatically so
by the classification of the symmetric space.
The classical cases are considered first, followed by the exceptional cases. 

\subsection{Classical cases}
The following homotopy decompositions for quasi-$p$-regular classical 
Lie groups are due to Mimura and Toda~\cite{Mimura-Toda}. 
\begin{Thm}\label{MTdecomp} 
For an odd prime $p$, there are homotopy equivalences: 
\[
\begin{array}{|c|c|c|}
\hline
G & p \text{ (odd) } & \\
\hline
\SU(n) & p>n/2 &
\displaystyle
\prod_{i=2}^{n-p+1}
B(2i-1,2i+2p-3) \times 
\prod_{j=\max(2,n-p+2)}^{\min(n,p)}
S^{2j-1}
 \\
\hline
\SO(2n+1) & p>n &
\displaystyle
\prod_{i=1}^{n-\frac{p-1}{2}}B(4i-1,4i+2p-3) \times 
\prod_{j=n-\frac{p-3}{2}}^{\min(n,\frac{p-1}{2})} S^{4j-1}\\
\hline
\Sp(n) & p>n &
\displaystyle
\prod_{i=1}^{n-\frac{p-1}{2}}B(4i-1,4i+2p-3) \times 
\prod_{j=n-\frac{p-3}{2}}^{\min(n,\frac{p-1}{2})} S^{4j-1} \\
\hline
\SO(2n) & p>n-1 &
\displaystyle
\prod_{i=1}^{n-\frac{p+1}{2}}B(4i-1,4i+2p-3) \times 
\prod_{j=n-\frac{p-1}{2}}^{\min(n-1,\frac{p-1}{2})} S^{4j-1} \times
S^{2n-1} \\
\hline
\end{array}
\] 
$\qqed$ 
\end{Thm}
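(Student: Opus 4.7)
The statement is attributed to Mimura--Toda~\cite{Mimura-Toda}, so the ``proof'' in the present paper will presumably be a reference to that source. Nevertheless, let me sketch how I would approach it from scratch. The strategy is the classical one: exhibit enough explicit maps from spheres and from the spaces $B(2m-1,2m+2p-3)$ into $G$, combine them using the $H$-space structure of $G$, and check that the resulting map is an isomorphism on mod~$p$ cohomology.

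The starting point in each case is the cohomology of $G$, which is an exterior algebra on odd degree generators together with the Steenrod operation $\mathcal{P}^{1}$. For $\SU(n)$ one has $H^{\ast}(\SU(n);\F_{p})\cong\Lambda(x_{3},x_{5},\ldots,x_{2n-1})$ with $\mathcal{P}^{1}x_{2i-1}=\binom{i}{p-1}\,x_{2i+2p-3}$, and the bound $p>n/2$ selects exactly which pairs of generators are linked by $\mathcal{P}^{1}$: the generators $x_{2i-1}$ and $x_{2i+2p-3}$ for $2\le i\le n-p+1$ pair up, while the remaining generators $x_{2j-1}$ with $\max(2,n-p+2)\le j\le\min(n,p)$ sit in isolation. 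The identical analysis applies to $\Sp(n)$ and $\SO(2n+1)$ (whose cohomologies are identical mod $p$ for odd $p$) and, with the extra Euler-class generator $x_{2n-1}$, to $\SO(2n)$. This cohomological pattern dictates the claimed factorization.

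To realize the decomposition I would construct the individual factor maps. For each isolated generator $x_{2j-1}$ I would produce a map $S^{2j-1}\to G$ using the standard sphere bundle structure: $\SU(n-1)\to\SU(n)\to S^{2n-1}$ gives a section up to homotopy $p$-locally in the appropriate range, and similarly for $\Sp(n),\SO(n)$. For each linked pair $(x_{2i-1},x_{2i+2p-3})$ I would first produce a map $S^{2i-1}\to G$, then extend it across the $\alpha_{1}$-attaching map defining $B(2i-1,2i+2p-3)$; the obstruction to extending lives in $\pi_{2i+2p-2}(G)$ and vanishes in this range because the $\alpha_{1}$-linkage in $H^{\ast}(G)$ precisely matches the $\alpha_{1}$-attaching of $B(2i-1,2i+2p-3)$ (equivalently, by Theorem~\ref{James} one can first extend to $\Omega\Sigma S^{2i-1}$ and then restrict along the canonical map $B(2i-1,2i+2p-3)\to\Omega\Sigma S^{2i-1}$ given by James). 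Multiplying all of these maps together via the $H$-structure of $G$ yields a single map from the claimed product into $G$.

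The final step is to verify that this product map is a mod~$p$ homotopy equivalence. Since all spaces involved have primitively generated mod~$p$ cohomology, it suffices to check that the map hits the indecomposables, and this reduces to a degree-by-degree comparison together with the $\mathcal{P}^{1}$-linkage check already built into the construction of the $B(2i-1,2i+2p-3)$ factors. The main obstacle, and the reason the bounds on $p$ are sharp, is controlling the obstructions to constructing the $B$-maps: one needs $\pi_{2i+2p-2}(G)_{(p)}$ and nearby groups to behave well enough that the $\alpha_{1}$-extension exists and is unique up to the desired equivalence, which is exactly why the arguments break outside the quasi-$p$-regular range. For this paper the statement is only used as an input, so I would simply quote~\cite{Mimura-Toda} and proceed.
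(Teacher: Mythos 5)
The paper gives no proof of this theorem: it is stated as a known result of Mimura and Toda and is simply cited, exactly as you anticipate. Your accompanying sketch is a reasonable outline of the standard argument in that reference (modulo a small slip: the obstruction to extending a map $S^{2i-1}\to G$ over the cell attached by $\alpha_{1}$ lies in $\pi_{2i+2p-4}(G)$, not $\pi_{2i+2p-2}(G)$), so there is nothing to compare beyond the citation.
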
 

We will also use the following homotopy decompositions, due to Harris~\cite{Harris3}.
\begin{Thm}[\cite{Harris3}]
\label{harris-decomp}
For an odd prime $p$, there are homotopy equivalences: 
\begin{align*}
 \SU(2n)  & \simeq_p \Sp(n) \times \SU(2n)/\Sp(n) \\
 \SU(2n+1) & \simeq_p \Spin(2n+1) \times \SU(2n+1)/\Spin(2n+1) \\
 \SO(2n+1) & \simeq_p \Spin(2n+1) \simeq_p \Sp(n) \\
 \SO(2n) & \simeq_p \Spin(2n) \simeq_p \Spin(2n-1) \times S^{2n-1}
\end{align*} 
$\qqed$ 
\end{Thm}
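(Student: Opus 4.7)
The plan is to establish each of the four $p$-local equivalences by constructing an explicit $p$-local section of the relevant projection and then using it to split a fibration. For the first two equivalences, which involve the symmetric pairs $(G,K) = (\SU(2n),\Sp(n))$ and $(G,K) = (\SU(2n+1),\Spin(2n+1))$, I would use the Cartan embedding $\bar{\phi}\colon G/K \hookrightarrow G$ defined by $\bar{\phi}(gK) = g\sigma(g)^{-1}$, where $\sigma$ is the involution of $G$ whose fixed subgroup is $K$. Composed with the quotient map $\pi\colon G \to G/K$, the result is the squaring map $gK \mapsto g^{2}K$. Since $p$ is odd, multiplication by $2$ is a $p$-local isomorphism on all homotopy groups of $G/K$, so $\pi\circ\bar{\phi}$ is a $p$-local self-equivalence and $\bar{\phi}$ is a $p$-local section of $\pi$. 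The resulting map $K \times G/K \to G$, $(k,x)\mapsto k\cdot\bar{\phi}(x)$, then induces an isomorphism on cohomology generators and hence is a $p$-local equivalence by a Hopf algebra comparison.

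For $\SO(2n+1) \simeq_{p} \Spin(2n+1)$, the covering $\Spin(2n+1) \to \SO(2n+1)$ has degree $2$, so it is automatically a $p$-local equivalence for odd $p$. For $\Spin(2n+1) \simeq_{p} \Sp(n)$, both groups share the same mod-$p$ cohomology as Hopf algebras (an exterior algebra on generators of degrees $3, 7, \ldots, 4n-1$) with compatible Steenrod operations, reflecting the coincidence of Weyl groups $W(B_{n}) = W(C_{n})$. I would produce a $p$-local equivalence by constructing a map (through classifying spaces and their $p$-adic completions, or via a comparison of maximal tori modulo their Weyl group actions) that realizes the cohomology isomorphism, and then invoke Whitehead's theorem $p$-locally.

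For $\SO(2n) \simeq_{p} \Spin(2n-1) \times S^{2n-1}$, I would use the principal fibration
\[ \Spin(2n-1) \longrightarrow \Spin(2n) \longrightarrow S^{2n-1} \]
coming from the standard subgroup inclusion, and show that it admits a $p$-local section. The classifying map $S^{2n-1} \to B\Spin(2n-1)$ of the bundle lies in a homotopy group on which $2$ acts invertibly $p$-locally, so the classifying map becomes null and a section exists. Combining this splitting with $\SO(2n) \simeq_{p} \Spin(2n)$ (again because the cover has degree $2$) yields the decomposition.

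The main obstacle is the equivalence $\Spin(2n+1) \simeq_{p} \Sp(n)$, since there is no group homomorphism realizing it for $n\geq 2$ and one must produce a $p$-local map of spaces by hand. Everything else is a routine consequence of the squaring trick on symmetric spaces and the invertibility of $2$ at odd primes; the Cartan embedding and covering-space arguments are clean, but assembling a $p$-local equivalence between $\Spin(2n+1)$ and $\Sp(n)$ requires a more delicate argument at the level of classifying spaces or maximal tori, which is the crux of Harris's original proof in \cite{Harris3}.
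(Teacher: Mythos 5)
The paper does not prove this statement at all: Theorem~\ref{harris-decomp} is quoted from Harris \cite{Harris3} and closed immediately with a box, so there is no internal argument to compare against and your sketch has to stand on its own. For the first two splittings your route — the Cartan embedding $\bar{\phi}(gK)=g\sigma(g)^{-1}$, the observation that $\pi\circ\bar{\phi}$ is the geodesic-doubling map, and the multiplication $K\times G/K\to G$ — is precisely Harris's mechanism and is sound. One precision issue: $G/K$ is not an $H$-space, so "multiplication by $2$ on all homotopy groups" is not what the composite literally induces; the clean statement is that $\pi\circ\bar{\phi}$ induces multiplication by $2$ on the exterior-algebra generators of $H^{*}(G/K;\Z_{(p)})$, hence multiplication by a power of $2$ on each monomial, hence an isomorphism of $p$-local (co)homology, and Whitehead then applies because these spaces are simply connected.

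There are two genuine gaps. First, for the splitting of $\Spin(2n-1)\to\Spin(2n)\to S^{2n-1}$ your justification — that the classifying map "lies in a homotopy group on which $2$ acts invertibly $p$-locally, so it becomes null" — is a non sequitur: $2$ acts invertibly on every homotopy group of every $p$-local space for $p$ odd, and that forces nothing to vanish. The correct input is that the characteristic element $\partial(\iota_{2n-1})\in\pi_{2n-2}(\Spin(2n-1))$ is integrally $2$-torsion, and the standard proof of that is the same squaring trick you already used: $S^{2n-1}=\SO(2n)/\SO(2n-1)$ is itself a symmetric space, the Cartan map $S^{2n-1}\to\SO(2n)$ followed by the projection has degree $2$ on the odd sphere, so $2\iota_{2n-1}$ lifts and $2\partial(\iota_{2n-1})=0$. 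Second, the equivalence $\Spin(2n+1)\simeq_{p}\Sp(n)$ — which you correctly identify as the crux — is asserted rather than proved: agreeing mod-$p$ cohomology as Hopf algebras over the Steenrod algebra does not by itself produce a map, and since no homomorphism realizes the equivalence one genuinely needs either Harris's comparison of the bundles $\Sp(n-1)\to\Sp(n)\to S^{4n-1}$ and $\Spin(2n-1)\to\Spin(2n+1)\to V_{2}(\R^{2n+1})$ (using that $V_{2}(\R^{2n+1})$ has the homotopy type of $S^{4n-1}$ away from $2$), or Friedlander's later realization of the characteristic-$2$ exceptional isogeny at the level of completed classifying spaces. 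As written, the hardest quarter of the theorem is left unestablished.
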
 

For expositional purposes, the $AIII$ case is examined first. 
\medskip 

\subsubsection{Type $AIII$}
Assume that $2m \le n$.
Observe that $\SU(n)/\SU(n-m) =U(n)/U(n-m)$.
Since the upper-left inclusion and the lower-right inclusions for $U(n)$
are conjugate and thus homotopic,
the inclusion $U(m)\times U(n-m) \hookrightarrow U(n)$ is homotopic to
\[
 U(m)\times U(n-m) \stackrel{\iota_m \times \iota_{m-n}}{\hookrightarrow}
  U(n) \times U(n) \xrightarrow{\mu} U(n),
\]
where $\iota_m: U(m) \hookrightarrow U(n)$ and $\iota_{n-m}: U(n-m) \hookrightarrow U(n)$
are the upper-left inclusions.
By Lemma \ref{fibrefact}, for $m \le n-m$ there is an integral homotopy equivalence 
\[\Omega (U(n)/U(n-m)\times U(m))\simeq U(m) \times \Omega (\SU(n)/\SU(n-m)).\] 

By Theorem~\ref{MTdecomp}, there are homotopy equivalences 
\begin{align*}
  \SU(n-m) &=\prod_{i=2}^{n-m-p+1} B(2i-1,2i+2p-3)\times 
        \prod_{j=n-m-p+2}^{\min(p,n-m)} S^{2j-1}\\ 
  \SU(n) &= \prod_{i=2}^{n-p+1} B(2i-1,2i+2p-3)\times 
         \prod_{j=n-p+2}^{\min(p,n)} S^{2j-1}.
\end{align*} 
So if we define spaces $A'_{i}$ and $A_{i}$ for $i\leq 2\leq p$ by 
\begin{align*}
\bigvee_{i=2}^{p} A'_i &=\bigvee_{i=2}^{n-m-p+1} A(2i-1,2i+2p-3)\vee 
        \bigvee_{j=n-m-p+2}^{\min(p,n-m)} S^{2j-1}\\ 
\bigvee_{i=2}^{p} A_i &= \bigvee_{i=2}^{n-p+1} A(2i-1,2i+2p-3)\vee 
         \bigvee_{j=n-p+2}^{\min(p,n)} S^{2j-1}
\end{align*} 
then by Theorem~\ref{qpregAG} there is a homotopy commutative diagram 
\[
\xymatrix{
 \bigvee_{i=2}^{p} A'_i \ar[r]^{\bigvee_{i=2}^{p} q_{i}} \ar[d]  
        & \bigvee_{i=2}^{p} A_i  \ar[d]  \\
 \SU(n-m) \ar[r]^-{\varphi} & \SU(n).} 
\] 
In each case, since $\varphi^{\ast}$ is a projection, each $(q_{i})^{\ast}$ 
is an epimorphism. 
So by Proposition~\ref{MqID} and Corollary~\ref{decompcor} we have 
\[
\Omega (\SU(n)/\SU(n-m))\simeq_{p}\prod_{i=2}^{p}\fib(M(q_{i})) \simeq_p
\prod_{j=n-m+1}^{n}\!\! 
\Omega S^{2j-1}.
\] 
Thus, for $p>n/2$, we obtain 
\begin{multline*}
 \Omega (U(n)/U(m)\times U(n-m)) \simeq_p U(m) \times \Omega (\SU(n)/\SU(n-m)) \\
  \qquad \simeq_p 
\prod_{j=1}^{m} S^{2j-1}\! 
\times\!\!\! \!\!
\prod_{j=n-m+1}^{n}\!\! 
\Omega S^{2j-1}.
\end{multline*}

\begin{Rem} 
Using a different approach, a homotopy decomposition for 
$\Omega(\SU(n)/\SU(n-m))$ is obtained in~\cite{Beben,Grbic-Zhao} which 
holds for $n\leq (p-1)(p-2)$. This range includes the quasi-$p$-regular cases 
and more. However, those methods do not extend to exceptional cases while 
ours do, so the argument above was given in detail for the sake of illustrating 
our approach. 
\end{Rem}

\subsubsection{Type $CII$}
Assume $2m \le n$.
Similar to the type $AIII$ case, for $n<p$ we have 
\begin{multline*}
\Omega(\Sp(n)/(\Sp(m)\times \Sp(n-m))) \simeq_p 
\Sp(m) \times \Omega(\Sp(n)/\Sp(n-m)) 
 \\ \qquad
 \simeq_p
\prod_{j=1}^{m} S^{4j-1}
\times\!\! \!\!\!\!
\prod_{j=n-m+1}^{n}
\Omega S^{4j-1}. 
\end{multline*}

\subsubsection{Type $BDI$}
Similar to the type $AIII$ case, we have 
\[
\Omega(\SO(n)/(\SO(m)\times \SO(n-m))) \simeq_p 
\SO(m)\times \Omega (\SO(n)/\SO(n-m)),
\]
where $2m \le n$. By Theorem~\ref{harris-decomp}, for $p$ odd there are 
homotopy equivalences $\SO(2k+1)\simeq_{p}\Sp(k)$ and 
$\SO(2k+2)\simeq_{p}\Sp(k)\times S^{2k+1}$. Therefore, we obtain 
homotopy equivalences:\\
\begin{align*}
\Omega(\SO(2n+1)/\SO(2(n-m)+1)) &\simeq_p 
\Omega(\Sp(n)/\Sp(n-m)) \\
\Omega(\SO(2n+1)/\SO(2(n-m)+2)) &\simeq_p 
S^{2(n-m)+1} \times \Omega(\Sp(n)/\Sp(n-m))  \\
\Omega(\SO(2n+2)/\SO(2(n-m)+1)) &\simeq_p 
\Omega S^{2n+1} \times \Omega(\Sp(n)/\Sp(n-m)) \\
\Omega(\SO(2n+2)/\SO(2(n-m)+2)) & \simeq_p 
S^{2(n-m)+1} \times \Omega S^{2n+1} \times \Omega(\Sp(n)/\Sp(n-m)).
\end{align*} 
Complete decompositions are now obtained from the $CII$ case.

\subsubsection{Types $AI,AII$} 
Homotopy decompositions of $\SU(2n)/\Sp(n)$ and $\SU(2n+1)/\SO(2n+1)$ 
are given in \cite[Thm 4.1]{MNT} as sub-decompositions of $\SU(n)$. 
\begin{align*}
\SU(2n)/\Sp(n) &\simeq_p
\prod_{i=1}^{n-\frac{p+1}{2}}\!\! 
B(4i+1,4i+2p-1) \times 
\prod_{j=\min(1,n-\frac{p-1}{2})}^{\min(n-1, \frac{p-1}{2})}\!\!\!\! 
S^{4j+1} 
\quad (p>n) \\
\SU(2n+1)/\SO(2n+1) &\simeq_p 
\prod_{i=1}^{n-\frac{p-1}{2}}\!\! 
B(4i+1,4i+2p-1) \times 
\prod_{j=\min(1,n-\frac{p-3}{2})}^{\min(n, \frac{p-1}{2})}\!\!\!\! 
S^{4j+1} \quad (p>n).
\end{align*}

For $\SU(2n)/\SO(2n)$, by~\cite[Theorem 6.7]{Mimura-Toda2}, 
\(Q\varphi^{\ast}\colon\namedright{QH^{t}(\SU(2m))}{}{QH^{t}(\SO(2m))}\) 
is nontrivial for $t\in\{3,7,\ldots,4m-5\}$. So arguing as in the $AIII$ 
case, we obtain a homotopy equivalence 
\[
\Omega \SU(2n)/\SO(2n) \simeq_p 
\Omega S^{2n}\times\prod_{i=1}^{n-\frac{p+1}{2}}\!\! 
\Omega B(4i+1,4i+2p-1) \times 
\prod_{j=\min(1,n-\frac{p-1}{2})}^{\min(n-1, \frac{p-1}{2})}\!\!\!\! 
\Omega S^{4j+1} \quad (p>n).
\]

\subsubsection{Types $CI,DIII$}
For the type $CI$ case of $\Sp(n)/U(n)$,
$\Sp(n)$ is quasi regular when $p>n$ and then
$U(n)\simeq_{p}\prod_{i=1}^n S^{2i-1}$. By~\cite[Theorem 5.8]{Mimura-Toda2}, 
\(Q\varphi^{\ast}\colon\namedright{QH^{t}(\Sp(n))}{}{QH^{t}(U(n))}\) 
is nontrivial for $t\in\{3,7,\ldots,4[n/2]\}$. So arguing as in the $AIII$ case we 
obtain a homotopy equivalence 
\[
\Omega (\Sp(n)/U(n)) \simeq_p 
\prod_{j=0}^{\left[\frac{n-1}{2}\right]}\!\! 
S^{4j+1} \times
\prod_{j=\left[\frac{n+2}{2}\right]}^{n}\!\! 
\Omega S^{4j-1},
\quad (p>n).
\]

For the type $DIII$ case of $\SO(2n)/U(n)$, we can reduce it to a type $CII$ case by
\[
\SO(2n)/U(n)=\SO(2n-1)/U(n-1) 
\simeq_p \Sp(n-1)/U(n-1).
\]


Summarising the results for classical cases, we have the following. 
\begin{Thm}\label{thm:classical}
For $p\geq 5$, there are homotopy equivalences: \\ 
\text{ }\\
\scalebox{0.85}{
\begin{minipage}{9cm}
\[
\begin{array}{|c|c||c|c|}
\hline
\text{Type} & G/H & p\geq 5 & \text{Homotopy type of }  \Omega(G/H)
\\ \hline 
AI &\SU(2n+1)/\SO(2n+1) & p>n &
\displaystyle
\prod_{i=1}^{n-\frac{p-1}{2}}\!\! 
\Omega B(4i+1,4i+2p-1) \times 
\prod_{j=\min(1,n-\frac{p-3}{2})}^{\min(n, \frac{p-1}{2})} \Omega S^{4j+1}
\\ 
& \SU(4n+2)/\SO(4n+2) & p=2n+1 &
\displaystyle
\prod_{i=1}^{n-1}
\Omega B(4i+1,4i+2p-1) 
\times \Omega S^{8n+1}
\times \Omega S^{8n+3}
\\
 & \SU(2n)/\SO(2n) & p>2n &
\displaystyle
\Omega S^{2n}\times\prod_{i=1}^{n-\frac{p+1}{2}}\!\! 
\Omega B(4i+1,4i+2p-1) \times 
\prod_{j=\min(1,n-\frac{p-1}{2})}^{\min(n-1, \frac{p-1}{2})}\!\!\!\! 
\Omega S^{4j+1}
\\ \hline
AII & \SU(2n)/\Sp(n) & p>n &
\displaystyle
\prod_{i=1}^{n-\frac{p+1}{2}}\!\! 
\Omega B(4i+1,4i+2p-1) \times 
\prod_{j=\max(1,n-\frac{p-1}{2})}^{\min(n-1, \frac{p-1}{2})}\!\!\!\! 
\Omega S^{4j+1}
\\ \hline
AIII & \dfrac{U(n)}{U(m)\times U(n-m)}^\dag & p>n/2 &
\displaystyle
\prod_{j=1}^{m} S^{2j-1}\! 
\times\!\!
\prod_{j=n-m+1}^{n}\!\! 
\Omega S^{2j-1}
\\ \hline
BDI & \dfrac{\SO(2n+1)}{\SO(2m)\times \SO(2(n-m)+1)}^\dag & p>n &
\displaystyle
\prod_{j=1}^{m-1} S^{4j-1}
\times S^{2m-1}
\times\!\!
\prod_{j=n-m+1}^{n}
\Omega S^{4j-1}
\\
 & \dfrac{\SO(2n+1)}{\SO(2m-1)\times \SO(2(n-m)+2)}^\ddag & p>n &
\displaystyle
\prod_{j=1}^{m-1} S^{4j-1}
\times S^{2(n-m)+1}
\times\!\!
\prod_{j=n-m+1}^{n}
\Omega S^{4j-1}
\\
 & \dfrac{\SO(2n+2)}{\SO(2m+1)\times \SO(2(n-m)+1)}^\dag & p>n &
\displaystyle
\prod_{j=1}^{m} S^{4j-1}
\times \Omega S^{2n+1} 
\times\!\!
\prod_{j=n-m+1}^{n}
\Omega S^{4j-1}
\\
 & \dfrac{\SO(2n+2)}{\SO(2m)\times \SO(2(n-m)+2)}^\ddag & p>n-1 &
\displaystyle
\prod_{j=1}^{m-1} S^{4j-1}
\times S^{2m-1} \times
S^{2(n-m)+1} \times \Omega S^{2n+1}
\times\!\!
\prod_{j=n-m+1}^{n}
\Omega S^{4j-1}
\\ \hline
CI & \Sp(n)/U(n) & p>n &
\displaystyle
\prod_{j=0}^{\left[\frac{n-1}{2}\right]}\!\! 
S^{4j+1} \times
\prod_{j=\left[\frac{n+2}{2}\right]}^{n}\!\! 
\Omega S^{4j-1}
\\ \hline
CII & \dfrac{\Sp(n)}{\Sp(m)\times \Sp(n-m)}^\dag & p>n &
\displaystyle
\prod_{j=1}^{m} S^{4j-1}
\times\!\! \!\!\!\!
\prod_{j=n-m+1}^{n}
\Omega S^{4j-1}
\\ \hline
DIII & \SO(2n)/U(n) & p>n-1 &
\displaystyle
\prod_{j=0}^{\left[\frac{n-2}{2}\right]}\!\! 
S^{4j+1} \times
\prod_{j=\left[\frac{n+1}{2}\right]}^{n-1}\!\! 
\Omega S^{4j-1}
\\ \hline
\end{array}
\]
\end{minipage}
}
\begin{itemize}
\item for $\dag$, we assume $2m\le n$
\item for $\ddag$, we assume $2m\le n+1$
\end{itemize} 
$\qqed$ 
\end{Thm}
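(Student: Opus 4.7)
The theorem consolidates the case-by-case computations sketched in Sections~5.1.1 through~5.1.5. In every row the uniform strategy is: (i)~use Theorem~\ref{MTdecomp}, together with Theorem~\ref{harris-decomp} where needed, to write both $G$ and $H$ as products of $B$'s and spheres, thereby identifying the wedges $\bigvee A_m$ and $\bigvee A'_m$ required by Theorem~\ref{qpregAG}; (ii)~compute the map of indecomposables $Q\varphi^{\ast}$ to determine, for each $m$, whether $(q_m)^{\ast}$ is an isomorphism, zero, or partially non-zero; (iii)~apply Proposition~\ref{MqID} and Corollary~\ref{decompcor}, interpreting an $m$ for which $q_m$ is null as contributing $\Omega M(A_m)$ to the product fibre, to read off $\Omega(G/H)$.

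For the Grassmannian-type rows $AIII$, $CII$, $BDI$ the block inclusion $H_1\times H_2\hookrightarrow G$ factors up to homotopy through $G\times G\xrightarrow{\mu}G$, since upper-left and lower-right block embeddings are conjugate in $G$; Lemma~\ref{fibrefact} then yields $\Omega(G/H)\simeq H_1\times\Omega(G/H_2)$ and reduces us to a single-block subgroup. In this reduced situation $\varphi^{\ast}$ is a coordinate projection, so $(q_m)^{\ast}$ is a homotopy equivalence on matched cells (fibre a point by Proposition~\ref{MqID}(1)) and null on unmatched cells of $A_m$ (fibre $\Omega S^{2j-1}$ from $M(\ast\to S^{2j-1})$). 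The $BDI$ and $DIII$ rows then follow by substituting $\SO(2k+1)\simeq_p\Sp(k)$ and $\SO(2k+2)\simeq_p\Sp(k)\times S^{2k+1}$ from Theorem~\ref{harris-decomp} and absorbing the extra sphere factors through Lemma~\ref{fibrefact}; the $DIII$ row further reduces to $CII$ via $\SO(2n)/U(n)\simeq_p\Sp(n-1)/U(n-1)$. Whenever $H$ carries a $U(m)$-factor, Remark~\ref{S1variation} strips off the $S^1$-summand before Theorem~\ref{qpregAG} is invoked.

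For the involutive rows $AI$, $AII$, $CI$ the non-triviality pattern of $Q\varphi^{\ast}$ is quoted from \cite{MNT,Mimura-Toda2}: in the $\SU(2n)/\SO(2n)$ case it is non-zero precisely in degrees $\{3,7,\ldots,4n-5\}$, and analogous lists hold for $\SU(2n+1)/\SO(2n+1)$, $\SU(2n)/\Sp(n)$ and $\Sp(n)/U(n)$. The matched $B$-to-$B$ entries where $Q\varphi^{\ast}\neq 0$ contribute a point by Proposition~\ref{MqID}(1), while unmatched $B$- and sphere-factors of $G$ at degrees where $Q\varphi^{\ast}=0$ contribute $\Omega B(4i+1,4i+2p-1)$ and $\Omega S^{4j+1}$; in $\SU(2n)/\SO(2n)$ the low-dimensional bottom generator of $\SO(2n)$ that is unmatched in $\SU(2n)$ accounts for the additional $\Omega S^{2n}$ factor. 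Collecting over all indices produces the displayed formula.

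The main obstacle is the bookkeeping: one must verify for every line of the table that the hypotheses of Theorem~\ref{qpregAG} hold, in particular the restriction~(\ref{B'restriction}) on the $B'$-factors of $H$ and the side condition that $B_2\neq S^3$ whenever some $A'_m$ contains $A(2p-1,4p-3)$. For $p\geq 5$ the indexing ranges in Theorem~\ref{MTdecomp} render both automatic in every row except the extremal $AI$ entry $\SU(4n+2)/\SO(4n+2)$ with $p=2n+1$, which lies at the boundary of quasi-$p$-regularity and requires an ad~hoc argument, in the spirit of Section~6, to recover the two additional $\Omega S$-factors beyond the standard pattern.
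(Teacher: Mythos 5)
Your outline reproduces the paper's argument for the Grassmannian‑type rows essentially verbatim: the reduction via conjugate block inclusions and Lemma~\ref{fibrefact}, the substitution of Harris's equivalences $\SO(2k+1)\simeq_{p}\Sp(k)$ and $\SO(2k+2)\simeq_{p}\Sp(k)\times S^{2k+1}$ for $BDI$, and the reduction of $DIII$ to $\Sp(n-1)/U(n-1)$ all match Sections~5.1.1--5.1.5. One genuine difference of route: for $\SU(2n+1)/\SO(2n+1)$ and $\SU(2n)/\Sp(n)$ the paper does not run the $Q\varphi^{\ast}$ machinery at all; it quotes the \emph{unlooped} decompositions of these symmetric spaces from \cite{MNT}, which exist because Harris's splittings in Theorem~\ref{harris-decomp} exhibit them as retracts of $\SU(2n+1)$ and $\SU(2n)$, and then simply loops. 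Your route through Theorem~\ref{qpregAG} would deliver the same looped answer, but the paper's gives a delooped statement for free; only $\SU(2n)/\SO(2n)$ and $\Sp(n)/U(n)$ are treated the way you describe.

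There are two concrete gaps. First, your step~(iii) declares that a null $q_{m}$ contributes $\Omega M(A_{m})$; that is correct only when $A'_{m}=\ast$. Proposition~\ref{MqID} covers only the case $(q_{m})^{\ast}\neq 0$, and several rows hinge precisely on indices where $(q_{m})^{\ast}$ vanishes on a \emph{nontrivial} summand of $A'_{m}$: the Euler-class sphere $S^{2n-1}$ of $\SO(2n)$ in the $\SU(2n)/\SO(2n)$ row, and the odd-index spheres of $U(n)$ in the $CI$ and $DIII$ rows. There the fibre of $M(q_{m})$ is $M(A'_{m})\times\Omega M(A_{m})$, and these $M(A'_{m})$'s are exactly the unlooped factors $S^{4j+1}$ in the $CI$/$DIII$ rows and the $S^{2n-1}$ hidden inside $\Omega S^{2n}\simeq_{p} S^{2n-1}\times\Omega S^{4n-1}$; moreover one must separately argue (by a connectivity computation as in Lemma~\ref{qID1}) that cohomological triviality of $q_{m}$ forces $q_{m}\simeq\ast$, since nothing in Section~\ref{sec:identify} addresses maps that are zero on cohomology but a priori essential. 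Second, you explicitly decline to prove the row $\SU(4n+2)/\SO(4n+2)$ at $p=2n+1$. That case genuinely falls outside the generic argument ($\SU(2p)$ sits at the boundary of quasi-$p$-regularity, its decomposition is not an instance of the displayed formula in Theorem~\ref{MTdecomp}, and the index $m=p$ produces a cohomologically trivial map $S^{2p-1}\to A(2p-1,4p-3)$), so deferring it to an unspecified ``ad hoc argument'' leaves one line of the theorem unestablished; if you do carry the machinery through at this boundary you should also check carefully that the product you obtain agrees with the displayed entry, since the factor $B(2p-1,4p-3)$ of $\SU(2p)$ interacts nontrivially with the Euler-class sphere there.
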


\begin{Rem} 
\label{Terzicremark1} 
Terzi\'{c}'s computation of the rational homotopy groups of classical 
symmetric spaces in~\cite{Terzic} can be reproduced from the decompositions 
above. Our list corrects a typo in her description of the rational homotopy type of 
$\SO(2n)/U(n)$. See also Remark~\ref{Terzicremark2} for the exceptional cases. 
\end{Rem} 

\begin{Rem} 
Mimura~\cite{Mimura2} showed that the homotopy decompositions 
for types $AI$ and $AII$ deloop. He also showed that these cases 
hold for $p=3$ as well, and the $AII$ case can be strengthened to hold 
for $p\geq n$. 
\end{Rem}

\subsection{Exceptional cases} 
The following homotopy decompositions for quasi-$p$-regular exceptional 
Lie groups are due to Mimura and Toda~\cite{Mimura-Toda}. 

\begin{Thm}\label{MTexceptional} 
For an odd prime $p$, there are homotopy equivalences: 
\[
\begin{array}{|c|c|c|}
\hline
G & p & \\
\hline
G_2 & 5 & B(3,11) \\
& \ge 7 & S^3 \times S^{11} \\
\hline
F_4 & 5 & B(3,11)\times B(15,23)  \\
& 7 & B(3,15)\times B(11,23) \\
& 11 & B(3,23)\times S^{11} \times S^{15} \\
& \ge 13 & S^3 \times S^{11} \times S^{15} \times S^{23}\\
\hline
E_6 & 5 & B(3,11)\times B(9,17) \times B(15,23) \\
& 7 & B(3,15)\times B(11,23) \times S^9 \times S^{17} \\
& 11 & B(3,23)\times S^9 \times S^{11} \times S^{15} \times S^{17}  \\
& \ge 13 & S^3\times S^9 \times S^{11} \times S^{15} \times S^{17} \times S^{23} \\
\hline
E_7 & 11 & B(3,23) \times B(15,35) \times S^{11} \times S^{19} \times S^{27} \\
& 13 & B(3,27) \times B(11,35) \times S^{15} \times S^{19} \times S^{23}  \\
& 17 & B(3,35) \times S^{11} \times S^{15} \times S^{19} \times S^{23} \times S^{27}  \\
& \ge 19 & S^3 \times S^{11} \times S^{15} \times S^{19} \times S^{23} \times S^{27}\times S^{35}  \\
\hline
E_8 & 11 & B(3,23) \times B(15,35) \times B(27,47) \times B(39,59) \\
& 13 & B(3,27) \times B(15,39) \times B(23,47) \times B(35,59) \\
& 17 & B(3,35) \times B(15,47) \times B(27,59) \times S^{23} \times S^{39}  \\
& 19 & B(3,39) \times B(23,59) \times S^{15} \times S^{27} \times S^{35} \times S^{47}  \\
& 23 & B(3,47) \times B(15,59) \times S^{23} \times S^{27} \times S^{35} \times S^{39} \\
& 29 & B(3,59) \times S^{15} \times S^{23} \times S^{27} \times S^{35} \times S^{39} \times S^{47}  \\
& \ge 31 & S^3 \times S^{15} \times S^{23} \times S^{27} \times S^{35} \times S^{39} \times S^{47} \times S^{59}\\
\hline
\end{array}
\] 
$\qqed$ 
\end{Thm}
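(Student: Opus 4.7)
The plan is to follow the Mimura--Toda strategy for $p$-local decompositions of compact Lie groups in three stages. First, at each prime $p$ appearing in the table, $H^*(G;\F_p)$ is torsion-free, so by Borel's theorem it is an exterior algebra $\Lambda(x_{2n_1-1},\ldots,x_{2n_\ell-1})$ whose generator-degrees are determined by the exponents of $G$. The relevant degree sets are $\{3,11\}$ for $G_2$, $\{3,11,15,23\}$ for $F_4$, $\{3,9,11,15,17,23\}$ for $E_6$, $\{3,11,15,19,23,27,35\}$ for $E_7$, and $\{3,15,23,27,35,39,47,59\}$ for $E_8$. Being quasi-$p$-regular means that these degrees admit a grouping into pairs $(2n-1,2n+2p-3)$ and singletons consistent with the Steenrod action computed below.

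Second, compute the action of $\mathcal{P}^1$ on each generator: this operation raises degree by $2(p-1)$, so $x_{2n-1}$ maps to a class in degree $2n+2p-3$, and one tests whether this is a nonzero multiple of the unique generator in that degree. The computation can be carried out by restricting to a maximal torus, $H^*(G;\F_p)\hookrightarrow H^*(BT;\F_p)^W$, writing the Weyl-invariant representatives explicitly, and applying the Cartan formula. The pattern of $\mathcal{P}^1$-links at each prime is precisely what is recorded in the table: every linked pair $(x_{2n-1},x_{2n+2p-3})$ contributes a $B(2n-1,2n+2p-3)$-factor, and every unlinked generator contributes a sphere factor.

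Third, construct the decomposition maps. For each $\mathcal{P}^1$-pair, build a map $B(2n-1,2n+2p-3)\to G$ hitting both generators, obtained by restricting along an inclusion of a smaller classical subgroup (for instance $\SU(k)\subset G$ or $\Sp(k)\subset G$) that carries the required sphere-bundle and pulls back the relevant cohomology classes. For each unlinked generator, build $S^{2m-1}\to G$ via a generator of $\pi_{2m-1}(G)_{(p)}$ with the correct Hurewicz image. Multiply these maps together using the $H$-space structure on $G$ and verify that the resulting map induces an isomorphism of Hopf algebras in mod $p$ cohomology; since both sides are exterior and primitively generated on matching generating sets, this reduces to checking the generators. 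Whitehead's theorem then upgrades the cohomology isomorphism to a $p$-local homotopy equivalence.

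The principal difficulty is Step 2: the $\mathcal{P}^1$-computation for $E_7$ and $E_8$ involves many cases and is most efficiently handled by propagating information through the chain $G_2\subset F_4\subset E_6\subset E_7\subset E_8$, together with explicit knowledge of which fundamental representations carry the low-degree cohomology generators. A subsidiary obstacle in Step 3 is exhibiting sphere-bundle maps $B(2n-1,2n+2p-3)\to G$ for exponents $2n-1$ not already covered by a classical subgroup inclusion, where one must instead invoke the structure of a Stiefel-type $S^{2n-1}$-bundle inside an appropriate subgroup, or construct the map through a coset space realization.
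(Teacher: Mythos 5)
This theorem is quoted in the paper from Mimura--Toda and Mimura--Nishida--Toda with no proof supplied, so there is no internal argument to compare against; what you have written is a reconstruction of the original strategy. Your Steps 1 and 2 are sound: the generator degrees are correct, and the $\mathcal{P}^{1}$-linkage pattern (degree shift $2(p-1)$) does reproduce exactly the pairing into $B$'s and singleton spheres shown in the table at every listed prime. The final Hopf-algebra/Whitehead argument is also the standard closing move (it is essentially the argument used in the proof of Theorem~\ref{decomp}).

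The genuine gap is in Step 3, and you have flagged it without resolving it. First, the maps $\namedright{B(2n-1,2n+2p-3)}{}{G}$ cannot in general be obtained ``by restricting along an inclusion of a smaller classical subgroup'': for instance $B(39,59)\to E_{8}$ at $p=11$, $B(23,59)\to E_{8}$ at $p=19$, and $B(15,35)\to E_{7}$ at $p=11$ require hitting primitive generators in degrees $35$, $47$ and $59$, and no classical subgroup of $E_{7}$ or $E_{8}$ carries indecomposables in those degrees ($\SU(8)/\{\pm I\}$ stops at $15$, $\Spin(16)$ at $29$). The actual construction must first produce a map from the two-cell complex $A(2n-1,2n+2p-3)$, which requires showing that the composite of a generator of $\pi_{2n-1}(G)_{(p)}$ with $\alpha_{1}$ vanishes, and then extend over the top cell of $B$ using the $H$-structure (the mechanism this paper packages as $M(A)\simeq B$); both steps rest on the metastable homotopy groups of the exceptional groups computed by Mimura--Toda, which your outline does not supply. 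Second, the absence of a $\mathcal{P}^{1}$-link does not by itself yield a sphere factor: you must exhibit maps $\namedright{S^{2m-1}}{}{G}$ whose Hurewicz images generate the corresponding indecomposables, i.e.\ surjectivity of $\pi_{2m-1}(G)_{(p)}\to QH_{2m-1}(G;\mathbb{Z}_{(p)})$, which is again a nontrivial homotopy-group input rather than a formal consequence of torsion-freeness. As written, the proposal is a correct plan with the two hardest verifications left open.
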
 

In analyzing the loop space of an exceptional symmetric space corresponding 
to a map 
\(\varphi\colon\namedright{H}{}{G}\) 
between quasi-$p$-regular Lie groups, we will use the following strategy. 
\medskip 

\noindent\textbf{Strategy}: 
\begin{itemize} 
   \item[(1)] Use Theorem~\ref{decomp} to replace $\namedright{H}{\varphi}{G}$ by 
                   \(\lllnamedright{\prod_{m=2}^{p} M(A'_{m})} 
                           {\prod_{m=2}^{p} M(q_{m})}{\prod_{m=2}^{p} M(A_{m})}\). 
   \item[(2)] Determine those $q_{m}$ which are nontrivial in cohomology 
                   via the induced map of indecomposable modules, 
                   \(Q\varphi^{\ast}\colon\namedright{QH^{\ast}{(G)}}{}{QH^{\ast}(H)}\). 
   \item[(3)] Observe that the remaining maps  
                   \(q_{m}\colon\namedright{A'_{m}}{}{A_{m}}\) 
                   are trivial because either $A'_{m}$ or $A_{m}$ is trivial. 
   \item[(4)] Deduce the homotopy fibre of $M(q_{m})$ from Proposition~\ref{MqID} 
                   or from the fact that $M(q_{m})$ is trivial. 
   \item[(5)] Use Corollary~\ref{decompcor} to obtain 
                   $\Omega(G/H)\simeq\prod_{m=2}^{p}\fib(M(q_{m}))$.  
\end{itemize} 
\medskip 

\subsubsection{Type $G$}
Recall that $\SO(4) \simeq_p S^3 \times S^3$ for $p\ge 5$.
For $p=5$, by Theorem~\ref{qpregAG} there is a homotopy 
commutative diagram 
\[
\xymatrix{
 S^3 \vee S^3 \ar[r] \ar[d] &
    A(3,11) \ar[d] \\
 \SO(4) \ar[r]^-{\varphi} & G_2.}
\]
Since
\[
 \varphi^*: QH^3(G_2;\F_p) \to QH^3(\SO(4);\F_p) \cong QH^{3}(S^{3}\times S^{3};\F_p)
\]
is non-trivial, 
Proposition~\ref{MqID} implies that there is a homotopy equivalence 
\[
 \Omega(G_2/\SO(4)) \simeq_p S^3 \times \Omega S^{11} \quad (p= 5).
\]
For $p>5$, the space $A(3,11)$ is replaced by $S^{3}\vee S^{11}$ and 
arguing as in the $p=5$ case we obtain 
\[
 \Omega(G_2/\SO(4)) \simeq_p S^3 \times \Omega S^{11} \quad (p> 5).
\]

\subsubsection{Type $FI$}
By Theorem~\ref{MTdecomp} there are homotopy equivalences 
\[
\SU(2)\cdot \Sp(3) \simeq_p \begin{cases}
S^3 \times B(3,11) \times S^7 & (p=5) \\
S^3 \times S^3 \times S^7 \times S^{11}& (p>5).
\end{cases} 
\]
It is shown in \cite{Ishitoya-Toda} that
\[
 H^*(FI; \F_p) = \F_p [f_4, f_8]/(r_{16}, r_{24}), \quad (p \ge 5)
\] 
for some relations $r_{16}, r_{24}$ in degrees $16$ and $24$ respectively. 
Thus  
\[
Q\varphi^*: QH^m(F_4; \F_p) \to QH^m(\SU(2)\cdot\Sp(3); \F_p) 
\]
is non-trivial for $m\in\{3,11\}$ and $p \ge 5$.
When $p=5$, by Theorem~\ref{qpregAG} there is a homotopy commutative 
diagram 
\[
\xymatrix{
 S^3 \vee A(3,11) \vee S^7 \ar[r] \ar[d] & A(3,11) \vee A(15,23) \ar[d] \\
 \SU(2) \cdot \Sp(3) \ar[r]^-{\varphi}  &  F_4. 
}
\]
Proposition \ref{MqID} therefore implies that there is a homotopy equivalence 
\[
 \Omega FI \simeq_5 S^3 \times S^7 \times \Omega B(15,23).
\] 

For $p>5$, arguing similarly we obtain 
\[
 \Omega FI \simeq_p S^3 \times S^7 \times \Omega S^{15} \times \Omega S^{23}.
\]


%
\subsubsection{Type $FII$} 
By Theorem~\ref{MTdecomp} there are homotopy equivalences 
\[
\Spin(9) \simeq_p \begin{cases}
B(3,11) \times B(7,15) & (p=5) \\
B(3,15) \times S^7 \times S^{11} & (p=7) \\
S^3 \times S^7 \times S^{11} \times S^{15} & (p>7).
\end{cases}
\]
Since 
\[
H^*(FII;\Z)=\Z[x_8]/(x_8^3),
\]
we have 
\[
 Q\varphi^*: QH^m(BF_4;\F_p) \to QH^m(B\Spin(9);\F_p)
\]
non-trivial for $m\in\{3,11,15\}$ and $p\ge 5$. Therefore, arguing 
as in the $FI$ case, we obtain homotopy equivalences 
\[
 \Omega(F_4/\Spin(9)) \simeq_p S^7 \times \Omega S^{23} \quad (p\ge 5).
\]

\subsubsection{Type $EIV$} 
It will be convenient to describe the $EIV$ case before that of $EI$. We 
contribute nothing new to this case. By \cite{Harris1}, for odd primes $p$ 
there is a homotopy equivalence 
\[
E_6 \simeq_p E_{6}/F_{4}\times F_{4}.
\] 
So from the decompositions of $E_{6}$ and $F_{4}$ in 
Theorem~\ref{MTexceptional} one obtains homotopy equivalences 
\[
E_{6}/F_4 \simeq \begin{cases}
B(9,17) & (p=5) \\
S^9 \times S^{17} & (p\ge 7). 
\end{cases}
\]

\subsubsection{Type $EI$}
By~\cite{Ishitoya}, for odd primes $p$ there is an isomorphism 
\[ 
H^{\ast}(EI;\F_p)=\F_{p}[e_{8})/(e_{8}^{3})\otimes E(e_{9},e_{17}). 
\] 
Notice that the right side is abstractly isomorphic to 
$H^{\ast}(F_{4}/\Spin(9);\F_{p})\otimes H^{\ast}(E_{6}/F_{4};\F_{4})$. 
Observe that at odd primes, $P\Sp(4) \simeq_p \Spin(9)$  
so $EI=E_{6}/P\Sp(4) \simeq_p E_6/\Spin(9)$. Let 
\[
\phi: E_{6}/F_{4}\to E_{6}
\] 
be the inclusion from the homotopy equivalence 
$E_{6}\simeq_{p} F_{4} \times E_{6}/F_{4}$ and let 
\[
\psi: F_{4}/\Spin(9) \to E_{6}/\Spin(9)
\]
be the map of quotient spaces induced from the factorization of the group homomorphism 
\(\namedright{\Spin(9)}{}{E_{6}}\) 
through $F_{4}$. From the homotopy fibration sequence 
\(\namedddright{E_{6}}{\partial}{E_{6}/\Spin(9)}{}{B\Spin(9)}{}{BE_{6}}\) 
there is a homotopy action 
\[ 
  \theta\colon\namedright{E_{6}\times E_{6}/\Spin(9)}{}{E_{6}/\Spin(9)}  
\] 
which extends $\partial\vee id$. The composition 
\[
 \theta \circ (\phi\times \psi): E_{6}/F_{4} \times F_{4}/\Spin(9) \to E_{6}/\Spin(9),
\] 
therefore induces an isomorphism in mod-$p$ cohomology and so is a 
homotopy equivalence. Combined with the identification of $EIV$ and $FII$ cases,
 we obtain homotopy equivalences 
\[
\Omega E_{6}/P\Sp(4) \simeq \begin{cases}
\Omega B(9,17) \times S^7 \times \Omega S^{23} & (p=5) \\ 
\Omega S^9 \times \Omega S^{17} \times S^7 \times \Omega S^{23}& (p\ge 7). \\
\end{cases}
\]

\subsubsection{Type $EII$}
By Theorem~\ref{MTdecomp} there are homotopy equivalences 
\[
 \SU(2) \cdot \SU(6) \simeq_p \begin{cases}
 S^3 \times B(3,11) \times S^5 \times S^7 \times S^9 & (p=5) \\
 S^3 \times S^3 \times S^5 \times S^7 \times S^9 \times S^{11}& (p>5). 
  \end{cases}
\]
By \cite{Ishitoya2}, for $p\ge 5$
\[
H^*(E_6/\SU(2) \cdot \SU(6); \F_p) = \F_p[x_4, x_6, x_8]/(r_{16}, r_{18}, r_{24})
\] 
for some relations $r_{16}, r_{18}, r_{24}$ in degrees $15, 18, 24$ 
respectively. Thus for $p \ge 5$
\[
 Q\varphi^*: QH^m(E_6; \F_p)\to QH^m(\SU(2) \cdot \SU(6); \F_p),
\]
is non-trivial for $m\in\{3,9,11\}$.
For $p=5$, by Theorem~\ref{qpregAG} there is a homotopy commutative diagram 
\[
\xymatrix{
S^3 \vee A(3,11) \vee S^5 \vee S^7 \vee S^9 \ar[r] \ar[d] &
A(3,11) \vee A(9,17) \vee A(15,23) \ar[d] \\
 \SU(2) \cdot \SU(6) \ar[r]^-{\varphi}  &  E_6. 
}
\]
Proposition \ref{MqID} therefore implies that there is a homotopy equivalence 
\[
 \Omega (E_6/\SU(2)\cdot \SU(6)) \simeq_5
 S^3 \times S^5 \times S^7 \times  
 \Omega S^{17}\times \Omega B(15,23).
 \]
For $p>5$, arguing similarly we obtain 
\[
 \Omega (E_6/\SU(2)\cdot \SU(6)) \simeq_p
 S^3 \times S^5 \times S^7 \times \Omega S^{15} \times 
 \Omega S^{17}\times \Omega S^{23} \quad (p>7).
\]

\subsubsection{Type $EIII$}
By Theorem~\ref{MTdecomp} there are homotopy equivalences 
\[
\Spin(10) \simeq_p \Spin(9) \times S^9 \simeq
\begin{cases}
 S^9 \times B(3,11) \times B(7,15) & (p=5) \\
 S^9 \times B(3,15) \times S^7  \times S^{11} & (p=7). \\
\end{cases}
\]
It is shown in \cite{Iliev-Manivel,Toda-Watanabe} that for $p\ge 5$
\[
H^*(E_6/ T^1\cdot \Spin(10); \F_p)=\F_p[ x_2, x_8]/(r_{18}, r_{24}) 
\] 
for some relations $r_{18}, r_{24}$ in degrees $18,24$. Thus 
\[
Q\varphi^*: QH^m(E_6; \F_p)\to QH^m(T^1\cdot \Spin(10); \F_p)
\]
is non-trivial for $m\in\{3,9,11,15\}$ for $p\ge 5$. Therefore, 
arguing as in the $EII$ case (but modifying slightly to account for the $S^{1}$ 
term by using Remark~\ref{S1variation}) we obtain homotopy equivalences 
\[
 \Omega (E_6/ T^1\cdot \Spin(10)) \simeq_p  
 S^1 \times \Omega S^{17} \times S^7 \times \Omega S^{23}
 \quad (p \ge 5).
\]

\subsubsection{Type $EV$} 
By Theorem~\ref{MTdecomp}, for $p\geq 11$ there are homotopy 
equivalences 
\[
\SU(8)/\{\pm I\} \simeq_p \SU(8) \simeq_p
S^3 \times S^5 \times S^7 \times S^9 \times S^{11} \times S^{13} \times S^{15}.
\] 
By the Appendix, 
\[
 Q\varphi^*:QH^m(E_7; \F_p) \to QH^m(\SU(8)/\{\pm I\}; \F_p)
\]
is non-trivial for $m\in\{3,11,15\}$ when $p\ge 11$. For $p=11$, 
by Theorem~\ref{qpregAG} there is a homotopy commutative diagram 
\[\diagram 
      S^{3}\vee S^{5}\vee S^{7}\vee S^{9}\vee S^{11}\vee S^{13}\vee S^{15}\rto\dto 
          & A(3,23)\vee A(15,35)\vee S^{11}\vee S^{19}\vee S^{27}\dto \\ 
      \SU(8)/\{\pm I\}\rto^-{\varphi} & E_{7}. 
  \enddiagram\] 
Proposition~\ref{MqID} therefore implies that there is a homotopy equivalence 
\[
 \Omega E_7/(\SU(8)/\{\pm I\}) \simeq_5
S^5 \times S^7 \times S^9 \times S^{13} \times
\Omega S^{19}\times \Omega S^{23} \times \Omega S^{27} \times \Omega S^{35}. 
\] 
For $p>11$, arguing similarly we obtain 
\[
 \Omega E_7/(\SU(8)/\{\pm I\}) \simeq_p 
S^5 \times S^7 \times S^9 \times S^{13} \times
\Omega S^{19}\times \Omega S^{23} \times \Omega S^{27} \times \Omega S^{35}
\quad (p\ge 11). 
\]

\subsubsection{Type $EVI$}
By Theorem~\ref{MTdecomp}, there are homotopy equivalences 
 \[
 \Spin(12) \simeq_p S^3 \times S^7  \times S^{11} \times S^{11} \times S^{15} \times S^{19} \quad (p\ge 11).
 \]  
By~\cite{Nakagawa2}, for $p\ge 5$
\[
H^*(E_7/T^1\cdot \Spin(12); \F_p) = 
\F_p[x_2, x_{8}, x_{12}]/(r_{24}, r_{28}, r_{36})
\] 
for some relations $r_{24},r_{28},r_{36}$ in degrees $24,28,36$ respectively. 
From the fibre sequence
\[
 S^2 \hookrightarrow E_7/T^1\cdot \Spin(12) \to E_7/\SU(2)\cdot \Spin(12)
\]
we therefore obtain 
\[
H^*(E_7/\SU(2) \cdot \Spin(12); \F_p) = \F_p[x_4, x_{8}, x_{12}]/I
\] 
for some ideal $I$ consisting of elements of degrees~$\geq 24$. Hence 
\[
 Q\varphi^*: QH^m(E_7; \F_p) \to QH^m(\SU(2)\cdot \Spin(12); \F_p)
\]
is non-trivial for $m\in\{3,11,15,19\}$ when $p\ge 11$. Therefore, 
arguing as in the $EV$ case we obtain homotopy equivalences 
\[
\Omega E_7/\SU(2)\cdot \Spin(12) \simeq_p S^3 \times
S^7 \times S^{11} \times \Omega S^{23} \times \Omega S^{27} \times \Omega S^{35} \quad (p\ge 11).
\]

\subsubsection{Type $EVII$} 
By Theorem~\ref{MTexceptional} there are homotopy equivalences 
\[
E_{6} \simeq_p  
\begin{cases}
 B(3,23) \times S^{9}\times S^{11}\times S^{15}\times S^{17} & (p=11) \\
 S^{3}\times S^{9}\times S^{11}\times S^{15}\times S^{17}\times S^{23} & (p>11). \\
\end{cases}
\]
By \cite{Chaput-Manivel,Watanabe}, for $p\ge 11$
\[
H^*(E_7/T^1\cdot E_6; \F_p) = \F_p[x_2, x_{10}, x_{18}]/(r_{20}, r_{28}, r_{36})
\] 
for some relations $r_{20},r_{28},r_{36}$ in degrees $20, 28, 36$ respectively. Thus 
\[
Q\varphi^*: QH^m(E_7; \F_p) \to QH^m(T^1\cdot E_6; \F_p)
\]
is non-trivial for $m\in\{3,11,15,23\}$ when $p\ge 11$. Therefore, arguing as in 
the $EV$ case (modifying slightly to account for the $S^{1}$ term by 
using Remark~\ref{S1variation}) we obtain homotopy equivalences 
\[
 \Omega E_7/T^1\cdot E_6 \simeq_p S^1\times S^9 \times S^{17} \times \Omega S^{19} \times \Omega S^{27} \times \Omega S^{35} \quad (p \ge 11).
\]

\subsubsection{Type $EVIII$}
Using Theorem~\ref{MTdecomp}, there are homotopy equivalences 
\[
 \Ss(16) \simeq_p S^{15}\times \Sp(7) \simeq_p \begin{cases}
 B(3,23) \times B(7,27) \times S^{11} \times S^{15} \times S^{15} \times S^{19} & (p=11) \\ 
 B(3,27) \times S^7 \times S^{11} \times S^{15} \times S^{15} \times S^{19} \times S^{23} & (p=13) \\ 
 S^3 \times S^7 \times S^{11} \times S^{15} \times S^{15} \times S^{19} \times S^{23} \times S^{27} & (p\ge 17). \\ 
 \end{cases}
\] 
By \cite{Hamanaka-Kono} and \cite{Kaji-Kishimoto}, 
\[
 Q\varphi^*: QH^*(E_8;\F_p) \to QH^*(\Ss(16);\F_p)
\]
is non-trivial for $m\in\{3,15,23,27\}$ when $p>5$. For $p=11$, 
by Theorem~\ref{qpregAG} there is a homotopy commutative diagram 
\[\diagram 
       A(3,23)\vee A(7,27)\vee S^{11}\vee S^{15}\vee S^{15}\vee S^{19}\rto\dto 
          & A(3,23)\vee A(15,35)\vee A(27,47)\vee A(39,59)\dto \\ 
       \Ss(16)\rto^-{\varphi} & E_{8}. 
  \enddiagram\] 
Proposition~\ref{MqID} therefore implies that there is a homotopy equivalence 
\[
\Omega E_8/\Ss(16) \simeq_{11} S^7 \times S^{11} \times S^{15} \times S^{19} 
     \times \Omega S^{35} \times \Omega S^{47} \times \Omega B(39,59).\] 
For $p>11$, arguing similarly we obtain 
\[
\Omega E_8/\Ss(16) \simeq_p \begin{cases}
S^7 \times S^{11} \times S^{15} \times S^{19} \times \Omega S^{39} \times 
    \Omega S^{47} \times \Omega B(35,59) & (p=13) \\
S^7 \times S^{11} \times S^{15} \times S^{19} \times \Omega S^{35} \times 
   \Omega S^{39}  \times \Omega S^{47} \times \Omega S^{59} & (p\ge 17). \\
\end{cases}
\]

\subsubsection{Type $EIX$} 
Recall the four cases for the homotopy decomposition of $E_{7}$ in 
Theorem~\ref{MTexceptional} when $p\geq 11$. By~ \cite{Nakagawa1},
\[
 H^*(E_8/T^1\cdot E_7; \F_p)=
 \F_p[x_2, x_{12}, x_{20}]/(r_{40}, r_{48}, r_{60}),
\] 
for some relations $r_{40},r_{48},r_{60}$ in degrees $40,48,60$ respectively. 
From the fibre sequence
\[
 S^2 \hookrightarrow E_8/T^1\cdot E_7 \to E_8/\SU(2)\cdot E_7
\]
we obtain 
\[
 H^*(E_8/\SU(2)\cdot E_7; \F_p) = \F_p [x_4, x_{12}, x_{20}]/I
\] 
where $I$ is some ideal consising of elements in degrees~$\geq 40$. Thus 
\[
 Q\varphi^m: QH^*(E_8; \F_p) \to QH^m(\SU(2)\cdot E_7; \F_p)
\]
is non-trivial for $m\in\{3,15,23,27,35\}$ when $p\ge 11$.
Arguing similarly to the $EVIII$ case we obtain homotopy equivalences 
\[
\Omega E_8/\SU(2)\cdot E_7 \simeq_p \begin{cases}
S^3 \times S^{11} \times S^{19} \times \Omega S^{47} \times \Omega B(39,59) & (p=11) \\
S^3 \times S^{11} \times S^{19} \times \Omega S^{39} \times \Omega S^{47} \times \Omega S^{59} & (p\ge 13). 
\end{cases}
\]
\medskip 

Summarising the results for the exceptional cases, we have the following 
(together with exponent information which will be proved later in 
Section~\ref{sec:exponents}). 

\begin{Thm}\label{thm:exceptional} 
For $p$ an odd prime, there are homotopy equivalences: \\ 
{\footnotesize
\begin{tabular}{|l|l||l|l|} \hline
Type & \hspace{1cm} $G/H$ 
    & \hspace{2.5cm} Homotopy type of $\Omega(G/H)$ 
    & Exponent\\  
\hline \hline 
$G$ & 
$G_{2}/\SO(4)$ &
$\begin{array}{ll}
S^{3}\times\Omega S^{11} & p\geq 5 
\end{array}$  
& $\begin{array}{l} 
   =p^{5} 
\end{array}$  
\\ \hline 
$FI$ & 
$F_{4}/\SU(2)\cdot \Sp(3)$ &
$\begin{array}{ll}
S^{3}\times S^{7}\times\Omega B(15,23) & p=5 \\ 
S^{3}\times S^{7}\times\Omega S^{15}\times\Omega S^{23} & p\geq 7 
\end{array}$ 
& $\begin{array}{l} 
\leq 5^{12} \\ =p^{11} 
\end{array}$ 
\\ \hline 
$FII$ & 
$F_{4}/\Spin(9)$ &
$\begin{array}{ll}
S^{7}\times\Omega S^{23} & p\geq 5 
\end{array}$ 
& $\begin{array}{l} 
   =p^{11} 
\end{array}$ 
\\ \hline 
$EI$ & 
$E_{6}/P\Sp(4)$ & 
$\begin{array}{ll} 
S^{7}\times\Omega B(9,17)\times\Omega S^{23} & p=5 \\ 
S^{7}\times\Omega S^{9}\times\Omega S^{17}\times\Omega S^{23} & p\geq 7 
\end{array}$ 
& $\begin{array}{l} 
= 5^{11} \\ =p^{11} 
\end{array}$ 
\\ \hline 
$EII$ & 
$E_{6}/\SU(2)\cdot \SU(6)$ & 
$\begin{array}{ll}
S^3 \times S^5 \times S^7 \times  
 \Omega S^{17}\times \Omega B(15,23) & p=5 \\ 
S^3 \times S^5 \times S^7 \times  
 \Omega^{15} \times \Omega S^{17}\times \Omega S^{23} & p\geq 7 
\end{array}$ 
& $\begin{array}{l} 
\leq 5^{12} \\ = p^{11} 
\end{array}$ 
\\ \hline 
$EIII$ & 
$E_{6}/T^{1}\cdot \Spin(10)$ & 
$\begin{array}{ll}
S^{1}\times S^{7}\times\Omega S^{17}\times\Omega S^{23} & p\geq 5 
\end{array}$ 
& $\begin{array}{l} 
   =p^{11} 
\end{array}$ 
\\ \hline 
$EIV$ & 
$E_{6}/F_{4}$ &
$\begin{array}{ll} 
\Omega B(9,17) & p=5 \\ 
\Omega S^{9}\times\Omega S^{17} & p\geq 7 
\end{array}$ 
& $\begin{array}{l} 
\leq 5^{9} \\ =p^{8} 
\end{array}$ 
\\ \hline 
$EV$ & 
$E_{7}/(\SU(8)/\{ \pm I\})$ &
$\begin{array}{ll}
S^{5}\times S^{7}\times S^{9}\times S^{13}\times\Omega S^{19} 
    \times\Omega S^{23}\times\Omega S^{27}\times\Omega S^{35} & p\geq 11 
\end{array}$ 
& $\begin{array}{l} 
   =p^{17} 
\end{array}$ 
\\ \hline 
$EVI$ & 
$E_{7}/\SU(2)\cdot \Spin(12)$ & 
$\begin{array}{ll}
S^{3}\times S^{7}\times S^{11}\times\Omega S^{23}\times\Omega S^{27} 
    \times\Omega S^{35} & p\geq 11  
\end{array}$ 
& $\begin{array}{l} 
   =p^{17} 
\end{array}$ 
\\ \hline 
$EVII$ & 
$E_{7}/T^{1}\cdot E_{6}$ & 
$\begin{array}{ll}
S^{1}\times S^{9}\times S^{17}\times\Omega S^{19}\times\Omega S^{27} 
    \times\Omega S^{35} & p\geq 11 
\end{array}$ 
& $\begin{array}{l} 
   =p^{17} 
\end{array}$ 
\\ \hline 
$EVIII$ & 
$E_{8}/\Ss(16)$ &
$\begin{array}{ll} 
S^{7}\times S^{11}\times S^{15}\times S^{19}\times\Omega S^{35} 
    \times\Omega B(39,59)\times\Omega S^{47} & p=11 \\ 
S^{7}\times S^{11}\times S^{15}\times S^{19}\times\Omega B(35,59) 
    \times\Omega S^{39}\times\Omega S^{47} & p=13 \\ 
S^{7}\times S^{11}\times S^{15}\times S^{19}\times\Omega S^{35} 
     \times\Omega S^{39}\times\Omega S^{47}\times\Omega S^{59} & p\geq 17 
\end{array}$ 
& $\begin{array}{l} 
\leq 11^{30} \\ \leq 13^{30} \\ =p^{29} 
\end{array}$ 
\\ \hline 
$EIX$ & 
$E_{8}/\SU(2)\cdot E_{7}$ & 
$\begin{array}{ll}
S^{3}\times S^{11}\times S^{19}\times\Omega B(39,59)\times\Omega S^{47} 
     & p=11 \\ 
S^{3}\times S^{11}\times S^{19}\times\Omega S^{39}\times\Omega S^{47} 
     \times\Omega S^{59} & p\geq 13 
\end{array}$  
& $\begin{array}{l} 
\leq 11^{30} \\ =p^{29} 
\end{array}$ 
\\ \hline
\end{tabular}
} 

$\qqed$ 
\end{Thm}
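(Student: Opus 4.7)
The plan is to assemble each row of the exceptional table by applying a uniform five-step procedure to the given pair $(G,H)$, and then read off exponent bounds from the resulting decomposition. First, for each row I will invoke Theorem~\ref{MTexceptional} (together with Theorem~\ref{MTdecomp} when $H$ is classical) to present both $G$ and $H$ after localization at $p$ as products of spheres and of spaces $B(2n-1,2n+2p-3)$. Then I will invoke Theorem~\ref{qpregAG} to obtain a homotopy commutative square whose top row is a wedge map $\bigvee q_m$ between the corresponding $A$-skeleta. Proposition~\ref{MqID} reduces the identification of each $\fib(M(q_m))$ to determining whether $(q_m)^\ast$ is nonzero on each factor, and this is encoded by the map of indecomposables $Q\varphi^{\ast}\colon QH^{\ast}(G;\F_p)\to QH^{\ast}(H;\F_p)$.

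The main input at each row is therefore the computation of $Q\varphi^{\ast}$. I will read this off from the cohomology rings of the symmetric spaces $G/H$ recorded in the cited references (Ishitoya--Toda, Ishitoya, Nakagawa, Hamanaka--Kono, Kaji--Kishimoto, Toda--Watanabe, Chaput--Manivel, Iliev--Manivel, Watanabe), combined with the Serre spectral sequence of the fibration $H\to G\to G/H$: an indecomposable generator of degree $m$ in $QH^{\ast}(G;\F_p)$ survives under $\varphi^{\ast}$ precisely when it does not transgress to an indecomposable of degree $m+1$ in $H^{\ast}(G/H;\F_p)$. So the nontrivial degrees in $Q\varphi^{\ast}$ can be listed directly by inspecting the polynomial generators and lowest-degree relations of $H^{\ast}(G/H;\F_p)$. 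With these degrees in hand, Proposition~\ref{MqID} assigns the homotopy type of each $\fib(M(q_m))$, and Corollary~\ref{decompcor} assembles the product.

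Two small technical wrinkles need to be absorbed. The cases $EIII$ and $EVII$ include a circle factor $S^1$ in $H$, which is handled by the variant described in Remark~\ref{S1variation}: the restriction of $\varphi$ to $S^1$ is null homotopic since $G$ is simply connected, so $S^1$ splits off as a product factor of $\Omega(G/H)$ and one applies Theorem~\ref{qpregAG} to the complement. The case $EI$ is handled indirectly by first using the Harris splitting $E_6\simeq_p F_4\times E_6/F_4$ to promote the quotient $E_6/P\Sp(4)\simeq_p E_6/\Spin(9)$ to a product $F_4/\Spin(9)\times E_6/F_4$ via a homotopy action in the fibration $E_6\to E_6/\Spin(9)\to B\Spin(9)$, so that $EI$ reduces to the already-known $FII$ and $EIV$ decompositions. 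Finally, the exponent column will be derived in Section~\ref{sec:exponents} from the decompositions by combining known $p$-primary exponents for odd spheres (Cohen--Moore--Neisendorfer) with exponent bounds for $B(2n-1,2n+2p-3)$ (Davis--Theriault), and by looping only where necessary.

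The principal obstacle is the bookkeeping in the identification of $Q\varphi^{\ast}$ row by row, since the cohomological descriptions of the exceptional symmetric spaces are scattered through the literature and admit no uniform combinatorial formula, in particular for $EVIII$ and $EIX$ where one must pass through the auxiliary fibrations $S^2\hookrightarrow E_8/T^1\cdot E_7\to E_8/\SU(2)\cdot E_7$ (and analogously for $EVI$) to convert torus quotients into $\SU(2)$ quotients. Once the degrees carrying $Q\varphi^{\ast}\neq 0$ are pinned down, the remaining steps are mechanical applications of the machinery built in Sections~2--4.
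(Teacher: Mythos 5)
Your proposal follows the paper's own strategy essentially verbatim: decompose $G$ and $H$ via Theorems~\ref{MTexceptional} and~\ref{MTdecomp}, apply Theorem~\ref{qpregAG}, determine the nontrivial degrees of $Q\varphi^{\ast}$ from the known cohomology rings of the symmetric spaces (including the auxiliary $S^{2}$-fibrations for $EVI$, $EIX$, the $S^{1}$ variant of Remark~\ref{S1variation} for $EIII$, $EVII$, and the Harris splitting for $EI$), and then read off the fibres via Proposition~\ref{MqID} and Corollary~\ref{decompcor}, with exponents from the Cohen--Moore--Neisendorfer and Davis--Theriault bounds. This matches the paper's case-by-case analysis in Section~5.2 and Section~\ref{sec:exponents}, so no further comment is needed.
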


\begin{Rem} 
\label{Terzicremark2} 
Two of the decompositions in the previous table deloop. Harris~\cite{Harris1} 
showed that $E_{6}/F_{4}\simeq_{5} B(9,17)$ and 
$E_{6}/F_{4}\simeq_{p} S^{9}\times S^{17}$ for $p\geq 7$, and in this 
paper we show that $E_{6}/P\Sp(4)\simeq_{p}E_{6}/F_{4}\times F_{4}/\Spin(9)$ 
for $p\geq 3$. 
\end{Rem}

\begin{Rem}\label{rem:terzic-exceptional}
Terzic's computation of the rational homotopy groups
(\cite{Terzic})
can be easily reproduced from these decompositions.
We found minor mistakes in her calculations for 
$G_2/\SO(4)$ and $E_6/\SU(2)\cdot \SU(6)$.
See also Remark \ref{rem:terzic-exceptional} for classical cases.
\end{Rem}

\section{Limitations and extensions of the methods} 

In this section we examine the boundaries of our methods and results. 
It is natural to ask whether the loop space decompositions of symmetric 
spaces deloop, and whether the methods can be extended to apply in 
cases that are not quasi-$p$-regular. 
\medskip 

\subsection{Impossibility of delooping}
We gave decompositions for the loop spaces of symmetric spaces.
It is reasonable to ask whether they actually come from 
decompositions of symmetric spaces themselves. Kumpel~\cite{Kumpel} and 
Mimura~\cite{Mimura2} showed that if the homotopy fibration 
\(\nameddright{H}{}{G}{}{G/H}\) 
is totally non-cohomologous to zero then the symmetric space 
will decompose, delooping our results. This holds for $SU(2n+1)/\SO(2n+1)$, 
$\SU(2n)/\Sp(n)$, \mbox{$\Spin(2n)/\Spin(2n-1)$} and $E_{6}/F_{4}$. However, 
in general a delooping does not exist, as we now see with the particular example 
of $FI=F_{4}/\SU(2)\cdot \Sp(3)$.

We have shown that
\[
 \Omega FI \simeq_5 S^3 \times S^7 \times \Omega B(15,23).
\]
However, this decomposition does not deloop, as we now show. The 
following calculation will be needed.  

\begin{Thm}[\cite{Ishitoya-Toda}]
\[
H^{*}(FI;\F_{p}) = 
\dfrac{\F_{p}[f_{4},f_{8}]}{(f_{4}^{3}-12f_{4}f_{8}+8f_{12},
f_{4}f_{12}-3f_{8}^{2}, f_{8}^{3}-f_{12}^{2})}.
\] 
$\qqed$ 
\end{Thm}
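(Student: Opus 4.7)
The plan is to apply Borel's theorem to the equal-rank inclusion $\SU(2)\cdot\Sp(3)\hookrightarrow F_4$. Both groups have rank $4$ and torsion-free $\F_p$-cohomology for $p\ge 5$, and the Euler characteristic of the quotient is $|W(F_4)|/|W(\SU(2)\cdot\Sp(3))|=1152/96=12$, consistent with a zero-dimensional complete-intersection quotient of a polynomial ring on four generators.

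I would begin by recording the two classifying-space cohomologies at a prime $p\ge 5$. Since $p$ is coprime to the order of the diagonal central $\Z/2$ used to form $\SU(2)\cdot\Sp(3)$, one has $B(\SU(2)\cdot\Sp(3))\simeq_p B\SU(2)\times B\Sp(3)$, giving
\[
H^*(B(\SU(2)\cdot\Sp(3));\F_p)\cong\F_p[c_2,q_1,q_2,q_3],\quad |c_2|=4,\ |q_i|=4i,
\]
while transgression in the universal bundle yields $H^*(BF_4;\F_p)\cong\F_p[\alpha_4,\alpha_{12},\alpha_{16},\alpha_{24}]$, a polynomial ring on generators in the indicated degrees.

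Next I would determine the restriction map $i^*\colon H^*(BF_4;\F_p)\to H^*(B(\SU(2)\cdot\Sp(3));\F_p)$ via a common maximal torus. Identifying the root systems $A_1\oplus C_3\hookrightarrow F_4$, each generator $\alpha_{2k}$ pulls back to a specific Weyl-invariant polynomial in $c_2,q_1,q_2,q_3$, yielding relations in degrees $4$, $12$, $16$ and $24$. Applying Borel's theorem---equivalently, the collapse of the Eilenberg--Moore spectral sequence for the fibration $FI\to B(\SU(2)\cdot\Sp(3))\to BF_4$---then produces
\[
H^*(FI;\F_p)\cong\F_p[c_2,q_1,q_2,q_3]/\bigl(i^*(\alpha_4),i^*(\alpha_{12}),i^*(\alpha_{16}),i^*(\alpha_{24})\bigr).
\]
To recover the stated presentation, I would use the degree-$4$ relation to eliminate $c_2$ (relabeling $q_1$ as $f_4$), set $f_8:=q_2$, and introduce $f_{12}:=q_3$; the three remaining relations then take the displayed form.

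The principal obstacle is the explicit form of the restriction map: the coefficients $12$, $8$ and $3$ are dictated by the branching rule for $A_1\oplus C_3\hookrightarrow F_4$, and extracting them requires an invariant-theoretic calculation---for instance by evaluating the fundamental $F_4$-invariant polynomials on a maximal torus common to $F_4$ and $\SU(2)\cdot\Sp(3)$, or by using the known description of the generators of $H^*(BF_4)$ in terms of Pontrjagin-type classes of the adjoint representation. Once these coefficients are in hand, one verifies that none of them is divisible by any prime $p\ge 5$ (the largest prime factor appearing is $3$), so the presentation is valid at every such prime.
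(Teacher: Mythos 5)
This statement is quoted from Ishitoya--Toda; the paper gives no proof of its own, so the only question is whether your argument actually establishes the stated presentation. Your overall strategy is the standard (and correct) one, and it is consistent with everything checkable: for $p\geq 5$ both $F_4$ and $\SU(2)\cdot\Sp(3)$ are $p$-torsion free and of equal rank $4$, so $H^{*}(B(\SU(2)\cdot\Sp(3));\F_p)$ is free over $H^{*}(BF_4;\F_p)=\F_p[\alpha_4,\alpha_{12},\alpha_{16},\alpha_{24}]$, the Eilenberg--Moore spectral sequence collapses, and $H^{*}(FI;\F_p)$ is the quotient of $\F_p[c_2,q_1,q_2,q_3]$ by the four restricted generators. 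The numerology also checks out: eliminating $c_2$ leaves a complete intersection on generators of degrees $4,8,12$ with relations in degrees $12,16,24$, of total dimension $\frac{12\cdot 16\cdot 24}{4\cdot 8\cdot 12}=12=|W(F_4)|/|W(A_1\times C_3)|$ and formal dimension $8+8+12=28=\dim FI$. (Incidentally, your version silently corrects the typo in the paper's display, which should read $\F_p[f_4,f_8,f_{12}]$, as the paper itself does two lines later.)

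The genuine gap is that you stop exactly where the content of the theorem begins. The general machinery only shows that $H^{*}(FI;\F_p)$ is \emph{some} complete intersection of this shape; the specific relations $f_4^3-12f_4f_8+8f_{12}$, $f_4f_{12}-3f_8^2$, $f_8^3-f_{12}^2$ are equivalent to knowing $i^{*}\alpha_{4}$, $i^{*}\alpha_{12}$, $i^{*}\alpha_{16}$, $i^{*}\alpha_{24}$ explicitly as $W(A_1\times C_3)$-invariant polynomials, and you acknowledge but do not perform that invariant-theoretic computation (nor verify, for instance, that $c_2$ occurs in $i^{*}\alpha_4$ with unit coefficient, which your elimination step requires). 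This matters for the paper: the argument in Section 6.1 that $FI$ admits no nontrivial product decomposition at $p=5$ manipulates precisely these coefficients, so a proof that only pins down the degrees of the relations would not suffice. To close the gap you would need to carry out a computation analogous to the one the paper does in its Appendix for $\SU(8)/\{\pm I\}\subset E_7$: restrict the basic $W(F_4)$-invariants of degrees $2,6,8,12$ along a common maximal torus and express them in terms of $c_2,q_1,q_2,q_3$.
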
 

In particular, 
\[
H^{*}(FI;\F_{5}) = 
\dfrac{\F_{5}[f_{4},f_{8},f_{12}]}{(f_{4}^{3}-2f_{4}f_{8}-2f_{12},
f_{4}f_{12}-3f_{8}^{2}, f_{8}^{3}-f_{12}^{2})}.
\]
We will show that this ring cannot be a non-trivial tensor product of two rings.
From the relations we obtain 
\begin{align*}
3(f_{4}^{3}-2f_{4}f_{8}-2f_{12}) & \Rightarrow f_{12}=3f_{4}^{3}-f_{4}f_{8} \\
f_{4}f_{12}-3f_{8}^{2} & \Rightarrow f_{4}^{4}-2f_{4}^{2}f_{8}-f_{8}^{2} \\
f_{8}^{3}-f_{12}^{2}.
\end{align*}
If a splitting exists, there should be a substitution 
\[
 f_{4} \mapsto f_{4}, f_{8} \mapsto a f'_{8} + b f_{4}^{2}, \quad a \in \F^{\times}_{5}, b\in \F_{5}
\]
such that the relation
$f_{4}^{4}-2f_{4}^{2}f_{8}-f_{8}^{2}$ lies in $\F_{5}[f_{4}] \cup \F_{5}[f'_{8}]$.
However, this is impossible.
Therefore there is no non-trivial product decomposition for $FI$
localised at $p=5$.

\subsection{Non-quasi-$p$-regular cases} 

We study examples of Lie group homomorphisms 
\(\namedright{H}{\varphi}{G}\) 
when $H$ and/or $G$ are not quasi-$p$-regular. In the first three 
examples, the methods from Sections 2 to 4 hold and a homotopy decomposition 
of $\Omega(G/H)$ is obtained, while in the final two examples potential 
obstructions appear. 

All the examples occur at the prime $p=7$, and relate to the homotopy equivalences 
\[E_{7}\simeq_{7} B(3,15,27)\times B(11,23,35)\times S^{19}\] 
\[E_{8}\simeq_{7} B(3,15,27,39)\times B(23,35,47,59)\] 
established in~\cite{MNT}. 
\medskip  

\noindent 
1. $\mathbf{EV=E_{7}/(\SU(8)/\{\pm I\})}$. Here, 
$\SU(8)/\{\pm I\}\simeq_{7}\SU(8)\simeq_{7} 
     B(3,15)\times S^{5}\times S^{7}\times S^{9}\times S^{11}\times S^{13}$. 
We hope to apply Theorem~\ref{decomp}. Consider the composite 
\[\phi\colon\namedddright{A(3,15)\vee S^{5}\vee S^{7}\vee S^{9}\vee S^{11}\vee S^{13}} 
      {}{\SU(8)}{\varphi}{E_{7}}{\simeq}{B(3,15,27)\times B(11,23,35)\times S^{19}}.\] 
By~\cite{MNT}, the homotopy groups of  
$B(3,15,27)\times B(11,23,35)\times S^{19}$ 
are zero in dimensions $\{5,7,9,13\}$, so $\phi$ factors through a map 
\(\phi^{\prime}\colon\namedright{A(3,15)\vee S^{11}}{} 
      {B(3,15,27)\times B(11,23,35)\times S^{19}}\). 
As well, by~\cite{MNT} $\pi_{t}(B(11,23,35))=0$ for $t\in\{3,15\}$, 
$\pi_{11}(B(3,15,27))=0$ and $\pi_{t}(S^{19})=0$ for $t\in\{3,11,15\}$, 
so the map $\phi^{\prime}$ is determined by the maps 
\(\phi_{1}^{\prime}\colon\namedright{A(3,15)}{}{B(3,15,27)}\) 
and 
\(\phi_{2}^{\prime}\colon\namedright{S^{11}}{}{B(11,23,35)}\). 
The $15$-skeleton of $B(3,15,27)$ is $A(3,15)$ so $\phi_{1}^{\prime}$ 
factors as a composite 
\(\nameddright{A(3,15)}{g_{1}}{A(3,15,27)}{}{B(3,15,27)}\) 
for some map $g_{1}$. Similarly, $\phi_{2}^{\prime}$ factors as a composite 
\(\nameddright{S^{11}}{g_{2}}{A(11,23,35)}{}{B(11,23,35)}\) 
for some map $g_{2}$. Hence there is a homotopy commutative diagram 
\[\diagram 
         A(3,15)\vee S^{5}\vee S^{7}\vee S^{9}\vee S^{11}\vee S^{13}\rto^-{Q}\dto 
             & A(3,15)\vee S^{11}\rto^-{g_{1}\vee g_{2}} 
             & A(3,15,27)\vee A(11,23,35)\vee S^{19}\dto \\ 
         \SU(8)\rrto^-{\varphi} & &  E_{7} 
  \enddiagram\] 
where $Q$ is the pinch map. Therefore, noting that $M(S^{2n+1})\simeq S^{2n+1}$, 
by Theorem~\ref{decomp} and Corollary~\ref{decompcor}, the homotopy fibre 
of the map 
\(\namedright{\SU(8)}{\varphi}{E_{7}}\) 
is homotopy equivalent to the homotopy fibre of the composite 
\begin{align*} 
    M(A(3,15))\times S^{5}\times S^{7}\times S^{9}\times S^{11}\times S^{13}  
    & \stackrel{\pi}{\lllarrow} M(A(3,15))\times S^{11} \\ 
    & \stackrel{M(g_{1})\times M(g_{2})}{\lllarrow} 
       M(A(3,15,27))\times M(A(11,23,35))\times S^{19} 
\end{align*} 
where $\pi$ is the projection. 

In the Appendix it is shown that 
\[Q\varphi^{\ast}\colon\namedright{QH^{m}(E_{7})}{}{QH^{m}(\SU(8)/\{\pm I\})}\] 
is nontrivial for $m\in\{3,11,15\}$. Thus $g_{1}^{\ast}$ and $g_{2}^{\ast}$ are 
onto in mod-$7$ cohomology, implying that $M(g_{1})^{\ast}$ and $M(g_{2})^{\ast}$ 
are onto in mod-$7$ cohomology. Therefore, arguing as in Proposition~\ref{MqID}, 
there is a homotopy equivalence 
\[\Omega(E_{7}/(\SU(8)/\{\pm I\})\simeq_{7} S^{5}\times S^{7}\times S^{9}\times S^{13} 
       \times\Omega S^{27}\times\Omega B(23,35)\times\Omega S^{19}.\] 
\medskip 

\noindent 
2. $\mathbf{EVI=E_{7}/\SU(2)\cdot\Spin(12)}$. Here, 
$\SU(2)\cdot\Spin(12)\simeq_{7} \SU(2)\times\Spin(12)\simeq_{7} 
       S^{3}\times B(3,15)\times B(7,19)\times S^{11}\times S^{11}$. 
Arguing as in the previous case, we obtain maps 
\(g_{1}\colon\namedright{S^{3}\vee A(3,15)}{}{A(3,15,27)}\), 
\(g_{2}\colon\namedright{S^{11}\vee S^{11}}{}{A(11,23,35)}\) 
and 
\(g_{3}\colon\namedright{A(7,19)}{}{S^{19}}\) 
and a homotopy commutative diagram 
\[\diagram 
         (S^{3}\vee A(3,15))\vee (S^{11}\vee S^{11})\vee A(7,19) 
                  \rrto^-{g_{1}\vee g_{2}\vee g_{3}}\dto 
             & & A(3,15,27)\vee A(11,23,35)\vee S^{19}\dto \\ 
         S^{3}\times\Spin(12)\rrto^-{\varphi} & &  E_{7} 
  \enddiagram\] 
As in Section 5.2.8, $Q\varphi^{\ast}$ is nonzero in degrees 
$\{3,11,15,19\}$, so arguing as in the previous case we obtain a homotopy 
equivalence 
\[\Omega(E_{7}/\SU(2)\cdot\Spin(12))\simeq_{7} S^{3}\times\Omega S^{27}\times 
      S^{11}\times\Omega B(23,35)\times S^{7}.\] 
\medskip 

\noindent 
3. $\mathbf{EVII=E_{7}/T^{1}\cdot E_{6}}$. 
Here, $T^{1}\cdot E_{6}\simeq_{7} T^{1}\times E_{6}\simeq_{7} 
      S^{1}\times B(3,15)\times B(11,23)\times S^{9}\times S^{17}$. 
Arguing as in the first case, we obtain maps 
\(g_{1}\colon\namedright{A(3,15)}{}{A(3,15,27)}\) 
and 
\(g_{2}\colon\namedright{A(11,23)}{}{A(11,23,35)}\), 
and a homotopy commutative diagram 
\[\small\diagram 
         S^{1}\vee A(3,15)\vee A(11,23)\vee S^{9}\vee S^{17}\rto^-{Q}\dto 
             & A(3,15)\vee A(11,23)\rto^-{g_{1}\vee g_{2}} 
             & A(3,15,27)\vee A(11,23,35)\vee S^{19}\dto \\ 
         S^{1}\times E_{6}\rrto^-{\varphi} & &  E_{7} 
  \enddiagram\] 
where $Q$ is the pinch map. As in Section 5.2.9, $Q\varphi^{\ast}$ is nonzero 
in degrees $\{3,11,15,23\}$, so arguing as in the first case we obtain a 
homotopy equivalence 
\[\Omega(E_{7}/T^{1}\cdot E_{6})\simeq_{7} S^{1}\times\Omega S^{27}\times 
       \Omega S^{35}\times S^{9}\times S^{17}\times\Omega S^{19}.\] 
\medskip 

\noindent 
4. $\mathbf{EVIII=E_{8}/\Ss(16)}$. 
Here, 
$\Ss(16)\simeq_{7}\simeq\Spin(16)\simeq_{7} B(3,15,27)\times 
     B(7,19)\times B(11,23)\times S^{15}$. 
We hope to apply Theorem~\ref{decomp}. Consider the composite 
\[\phi\colon\namedright{A(3,15,27)\vee A(7,19)\vee A(11,23)\vee S^{15}} 
      {}{\Spin(16)\hspace{2cm}}\] 
\[\hspace{6cm}\nameddright{}{\varphi}{E_{8}}{\simeq}{B(3,15,27,39)\times B(23,35,47,59)}.\] 
By~\cite{MNT}, the homotopy groups of $B(3,15,27,39)\times B(23,35,47,59)$ 
are zero in dimensions $\{7,19\}$ so $\phi$ factors through a map 
\(\phi^{\prime}\colon\namedright{A(3,15,27)\vee A(11,23)\vee S^{15})} 
       {}{B(3,15,27,39)\times B(23,35,47,59)}\). 
By~\cite{MNT}, $\pi_{t}(B(23,35,47,59)=0$ for $t\in\{3,15,27\}$ and 
$\pi_{t}(B(3,15,27,35)=0$ for $t\in\{11,23\}$, so the map $\phi^{\prime}$ 
is determined by maps 
\(\phi_{1}^{\prime}\colon\namedright{A(3,15,27)\vee S^{15}}{}{B(3,15,27,39)}\) 
and 
\(\phi_{2}^{\prime}\colon\namedright{A(11,23)}{}{B(23,35,47,59)}\). 
Notice that the $27$-skeleton of $B(3,15,27,39)$ is $A(3,15,27)\cup e^{18}$, 
and $\pi_{27}(S^{18})\cong\mathbb{Z}/7\mathbb{Z}$. Thus there is a potential 
obstruction to lifting $\phi_{1}^{\prime}$ to a map 
\(\namedright{A(3,15,27)\vee S^{15}}{}{A(3,15,27,39)}\). 
It is unclear whether the obstruction vanishes. If not, then Theorem~\ref{decomp} 
cannot be applied and the homotopy type of $\Omega(E_{8}/\Ss(16))$ at $p=7$ 
would remain undetermined. 
\medskip 

\noindent 
5. $\mathbf{EVIX=E_{8}/SU(2)\cdot E_{7}}$. As in the previous example, we 
obtain an obstruction to lifting 
\(\phi_{1}^{\prime}\colon\namedright{S^{3}\vee A(3,15,27)}{}{B(3,15,27,39)}\) 
to $A(3,15,27,39)$, which leaves unresolved the homotopy type of 
$\Omega(E_{8}/SU(2)\cdot E_{7})$ at $p=7$. 

\begin{Rem} 
An important difference between the three $E_{7}$ examples that worked and 
the two~$E_{8}$ examples that did not is that the domain in the three $E_{7}$ 
examples were all quasi-$p$-regular while this was not the case in 
the $E_{8}$ examples. 
\end{Rem}

\section{Exponents} 
\label{sec:exponents} 

Recall that, for a prime $p$, the \emph{$p$-primary homotopy exponent} 
of a space $X$ is the least power of $p$ that annihilates the $p$-torsion 
in $\pi_{\ast}(X)$. If the $p$-primary exponent is $p^{r}$, write 
$\exp_{p}(X)=p^{r}$. The homotopy decompositions of $\Omega(G/H)$ 
allow us to find precise exponents or upper and lower bounds on the 
exponent of $G/H$. 

Observe that in every homotopy decomposition of $\Omega(G/H)$ in 
Theorems~\ref{thm:classical} and~\ref{thm:exceptional}, the factors 
are either spheres, sphere bundles 
over spheres, or the loops on either of these two. Exponent information 
about these spaces is known. A precise exponent for spheres was 
determined in~\cite{CMN2}, and exponent bounds for spaces of the 
form $B(2m-1,2m+2p-3)$ was determined in~\cite{Davis-Theriault}. 

\begin{Thm}[\cite{CMN2}] 
   \label{CMNexp} 
   Let $p\geq 5$. Then $\exp_{p}(S^{2n+1})=p^{n}$.~$\qqed$ 
\end{Thm}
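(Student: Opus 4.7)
The statement splits into two halves: the upper bound $\exp_{p}(S^{2n+1})\leq p^{n}$ and the lower bound $\exp_{p}(S^{2n+1})\geq p^{n}$, and I would treat them separately.

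\emph{Upper bound.} The plan is to construct, for each $n\geq 1$, a map
\[
\phi_{n}\colon\Omega^{2}S^{2n+1}\longrightarrow S^{2n-1}
\]
whose composite with the double suspension $E^{2}\colon S^{2n-1}\to\Omega^{2}S^{2n+1}$ is homotopic to the $p$-th power map on $S^{2n-1}$. Granting such a $\phi_{n}$, any $p$-torsion class $\alpha\in\pi_{k}(S^{2n+1})\cong\pi_{k-2}(\Omega^{2}S^{2n+1})$ has the property that $p\cdot\alpha$ lies in the image of $E^{2}_{\ast}\colon\pi_{k-2}(S^{2n-1})\to\pi_{k-2}(\Omega^{2}S^{2n+1})$. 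Induction on $n$, with base case $S^{1}$ (whose higher homotopy vanishes), then shows that $p^{n}$ annihilates the $p$-torsion in $\pi_{\ast}(S^{2n+1})$.

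\emph{Lower bound.} For the matching lower bound, the plan is to exhibit an explicit element of order exactly $p^{n}$ in some $\pi_{\ast}(S^{2n+1})$. A natural candidate is an iterated Toda bracket of $\alpha_{1}$ with itself, e.g.\ a class of the form $\langle\alpha_{1},p\,\iota,\alpha_{1},\ldots\rangle$ landing in $\pi_{\ast}(S^{2n+1})$, whose non-vanishing modulo $p^{n-1}$ can be detected by a mod-$p$ secondary cohomology operation (or, equivalently, by the $n$-fold James--Hopf invariant). Selick's theorem $\exp_{p}(S^{3})=p$ handles $n=1$ and anchors the induction step on this side as well.

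\emph{Main obstacle.} The entire weight of the argument rests on producing the map $\phi_{n}$, which is the Cohen--Moore--Neisendorfer construction. The idea is to study the mod-$p$ Moore space $P^{2n+1}(p)$, decompose $\Omega(\Sigma P^{2n+1}(p))$ using the free differential graded Lie algebra machinery into a product of simpler indecomposable factors, and identify a certain factor $T^{2n+1}(p)$ fitting into a fibration
\[
\namedddright{S^{2n-1}}{}{T^{2n+1}(p)}{}{\Omega S^{2n+1}\{p\}}{}{BS^{2n-1}},
\]
where $S^{2n+1}\{p\}$ is the homotopy fibre of the degree-$p$ map on $S^{2n+1}$. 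Combining the resulting $H$-space section with the standard fibration $\Omega^{2}S^{2n+1}\to S^{2n+1}\{p\}\to S^{2n+1}$ yields the desired factorization of the $p$-th power map through $S^{2n-1}$. Establishing this factorization rigorously is the technically deepest step; once it is in hand, the upper bound follows by a short inductive argument, and the lower bound is comparatively straightforward.
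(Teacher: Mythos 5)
This theorem is not proved in the paper; it is quoted from Cohen--Moore--Neisendorfer (with the lower bound going back to Gray), so your proposal is really a sketch of the contents of those papers rather than an alternative to anything here. The strategy you outline is the correct one, but there is a genuine gap in the upper-bound argument as written. You characterize $\phi_{n}$ by the property that $\phi_{n}\circ E^{2}$ is the degree $p$ map on $S^{2n-1}$, and then assert that for every $p$-torsion class $\alpha\in\pi_{k-2}(\Omega^{2}S^{2n+1})$ the class $p\alpha$ lies in the image of $E^{2}_{\ast}$. That does not follow from what you stated: the deduction requires the composite in the opposite order, namely $E^{2}\circ\phi_{n}\simeq p$ as self-maps of $\Omega^{2}S^{2n+1}$ (the $p$-th power map on the double loop space), which is the actual theorem of [CMN2] --- the $p$-th power map on $\Omega^{2}S^{2n+1}$ factors through the double suspension. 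From that one gets $p\alpha=E^{2}_{\ast}\bigl((\phi_{n})_{\ast}\alpha\bigr)$, and since $(\phi_{n})_{\ast}\alpha$ is again torsion for $k-2\neq 2n-1$, induction on $n$ gives $p^{n}\alpha=0$. The version you wrote, $\phi_{n}\circ E^{2}\simeq p$ on $S^{2n-1}$, is a formal consequence of the stronger statement (apply $\pi_{2n-1}$ and use that $E^{2}_{\ast}$ is an isomorphism there) but does not imply it. Your closing phrase ``factorization of the $p$-th power map through $S^{2n-1}$'' is the right statement, so this may be a slip, but the induction must be run from that stronger form. Also, $\phi_{n}$ arises as the connecting map of the fibration $S^{2n-1}\to T^{2n+1}\{p\}\to\Omega S^{2n+1}$ compared with the looped fibration of the degree $p$ map; the sequence cannot end in $BS^{2n-1}$, which does not exist $p$-locally for general $n$.

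For the lower bound, iterated Toda brackets built from $\alpha_{1}$ and $p\iota$ produce the elements $\alpha_{i}$, which have order $p$; they cannot witness an exponent of $p^{n}$. The elements of order exactly $p^{n}$ in $\pi_{\ast}(S^{2n+1})$ are Gray's, constructed from the image of $J$ and James periodicity and detected through the EHP sequence and the $e$-invariant rather than by a mod-$p$ secondary cohomology operation. Selick's theorem is an upper-bound statement and is not what anchors the lower bound at $n=1$ (there $\alpha_{1}\in\pi_{2p}(S^{3})$ already has order $p$).
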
 

\begin{Thm}[\cite{Davis-Theriault}]
   \label{DT} 
   Let $p\geq 5$. Then $\exp_{p}(B(3,2p+1))=p^{p+1}$ and 
   for $m>2$, 
   \[p^{m+p-2}\leq\exp_{p}(B(2m-1,2m+2p-3))\leq p^{m+p-1}.\] 
   $\qqed$ 
\end{Thm}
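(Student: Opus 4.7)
The plan is to establish the upper bound and the lower bound separately; the case $m=2$ will then follow from the fact that they happen to coincide at $p^{p+1}$. The starting point for both estimates is the principal $S^{2m-1}$-bundle
\[S^{2m-1}\longrightarrow B(2m-1,2m+2p-3)\longrightarrow S^{2m+2p-3}\]
coming from the pullback defining $B(2m-1,2m+2p-3)$, whose classifying map is (a unit multiple of) $\alpha_{1}(2m)$, an element of order exactly $p$ in $\pi_{2m+2p-3}(S^{2m-1})$.

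For the upper bound, I would loop the fibration once and exploit that $\Omega B(2m-1,2m+2p-3)$ is an $H$-space. Theorem~\ref{CMNexp} gives $\exp_{p}(S^{2m-1})=p^{m-1}$ and $\exp_{p}(S^{2m+2p-3})=p^{m+p-2}$, but the naive fibration estimate $\exp_{p}(F)\cdot\exp_{p}(B)$ only yields $p^{2m+p-3}$, which is far too weak. To sharpen this, I would use the Cohen--Moore--Neisendorfer factorization of the $p^{m+p-2}$-th power map on $\Omega S^{2m+2p-3}$ through a wedge of mod $p$ Moore spaces together with the fact that $\alpha_{1}$ has order $p$: this lets one lift the $p^{m+p-2}$-th power map from $\Omega B$ to $\Omega S^{2m-1}$, whose own $p^{m-1}$-th power is null, at the cost of a single extra factor of $p$ absorbed by the order of $\alpha_{1}$. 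Assembling these pieces produces a null homotopy of $p^{m+p-1}$ on $\Omega B$, hence on $B$.

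For the lower bound $p^{m+p-2}\leq\exp_{p}(B(2m-1,2m+2p-3))$ when $m>2$, I would exhibit an explicit torsion element. Toda's calculations produce a Greek-letter element in $\pi_{\ast}(S^{2m+2p-3})$ of order $p^{m+p-2}$; the obstruction to lifting this element through the projection $B\to S^{2m+2p-3}$ lies in a homotopy group of $S^{2m-1}$ that I would show is zero (or small enough) in the relevant dimension using Lemma~\ref{lem:sphere}, so a lift of the full order exists. For the special case $m=2$, the upper bound collapses to $p^{p+1}$ and the matching lower bound $p^{p+1}\leq\exp_{p}(B(3,2p+1))$ requires an additional factor of $p$; I would obtain it by showing that the relevant lift is obstructed on its $p^{p}$-th multiple but not on its $p^{p+1}$-th, reading the precise order off the homotopy exact sequence with the help of Lemma~\ref{lem:MT}.

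The main obstacle is the upper bound. The naive extension bound is off by $p^{m-2}$, and closing this gap requires genuine use of the CMN fine structure of the $p$-th power map on $\Omega^{2}S^{2n+1}$ and careful bookkeeping of how multiplication by $\alpha_{1}$ interacts with the product factorization. The lower bound, by contrast, is largely a matter of picking the right Toda element and verifying one obstruction vanishes; the subtlety there is confined to the edge case $m=2$, where the extra factor of $p$ has to be tracked through a secondary calculation.
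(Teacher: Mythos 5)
First, a point of comparison: the paper does not prove this statement at all --- it is quoted from \cite{Davis-Theriault} with the $\qqed$ immediately following --- so your attempt has to be measured against what the result actually requires rather than against an argument in this text. Your outline correctly locates where the difficulty sits (for $m>2$ the naive fibration estimate $\exp_{p}(S^{2m-1})\cdot\exp_{p}(S^{2m+2p-3})=p^{2m+p-3}$ overshoots by $p^{m-2}$), but the step that is supposed to close that gap is not a proof. The claimed lift of the $p^{m+p-2}$-power map of $\Omega B(2m-1,2m+2p-3)$ through $\Omega S^{2m-1}$ does not exist as stated: it would force $\Omega q\circ [p^{m+p-2}]$ to be null homotopic, yet that composite is nonzero on the free summand of $\pi_{2m+2p-4}(\Omega B)$; for the same reason there can be no ``null homotopy of $p^{m+p-1}$ on $\Omega B$'' --- one must work with torsion only. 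More seriously, even after restricting to torsion and writing $p^{m+p-2}x=i_{*}(y)$ with $y\in\pi_{k}(S^{2m-1})$, the assertion that only one further factor of $p$ is needed, ``absorbed by the order of $\alpha_{1}$,'' is precisely the content of the theorem and is given no mechanism: the order of $\alpha_{1}$ controls the connecting map $\partial\colon\Omega S^{2m+2p-3}\to S^{2m-1}$ (hence the kernel of $i_{*}$), not the order of elements in the image of $i_{*}$. Closing this gap is exactly where Davis and Theriault invest the Cohen--Moore--Neisendorfer secondary-suspension machinery in earnest; your appeal to ``a factorization through a wedge of mod $p$ Moore spaces'' does not correspond to a theorem that applies to $\Omega S^{2m+2p-3}$.

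The lower bounds are also not secured. For $m>2$ the element of order $p^{m+p-2}$ in $\pi_{*}(S^{2m+2p-3})$ is Gray's element, which lives in a dimension far beyond the range $t\leq 2p(p-1)-3$ covered by Lemma~\ref{lem:sphere}, so the obstruction computation you propose cannot be carried out with the tools you cite. For $m=2$ your framing is inverted: there the naive bound $\exp_{p}(S^{3})\cdot\exp_{p}(S^{2p+1})=p^{p+1}$ already gives the upper bound, and the entire difficulty is the lower bound $p^{p+1}\leq\exp_{p}(B(3,2p+1))$, which strictly exceeds the exponents of both the fibre and the base and therefore cannot be extracted from the long exact sequence by tracking a single obstruction; in \cite{Davis-Theriault} it rests on explicit $v_{1}$-periodic homotopy computations. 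Note also that your opening sentence (that the $m=2$ case ``follows from the fact that the two bounds coincide'') is inconsistent with this: at $m=2$ the general bounds give only $p^{p}\leq\exp_{p}\leq p^{p+1}$, which do not coincide.
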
 

Suppose that $X$ is a product of spheres and spaces $B(2i-1,2i+2p-3)$ 
for various~$i$. Rationally, $X$ is homotopy equivalent to a product of odd 
dimensional spheres, say $X\simeq_{\mathbb{Q}}\prod_{i=1}^{\ell} S^{2m_{i}+1}$. 
The \emph{type} of $X$ is the list $\{m_{1},\ldots,m_{\ell}\}$ where - 
relabelling if necessary - we may assume that $m_{1}\leq\cdots\leq m_{\ell}$. 
Theorems~\ref{CMNexp} and~\ref{DT} immediately imply that the exponent 
of $X$ depends only on the exponent of the factors of $X$ containing a 
generator in cohomology of degree~\mbox{$2m_{\ell}+1$}. Explicitly, 
$\exp_{p}(X)=p^{m_{\ell}}$ if each factor of $X$ containing a generator 
in cohomology of degree~$2m_{\ell}+1$ is a sphere, and 
$p^{m_{\ell}} \leq \exp_{p}(X)\leq p^{m_{\ell}+1}$ if at least one factor of $X$ 
containing a generator in cohomology of degree~$2m_{\ell}+1$ is 
$B(2m_{\ell}-2p+3, 2m_{\ell}+1)$. In our case, observe that the homotopy 
decompostions for $\Omega(G/H)$ in the classical cases listed in  
Theorem~\ref{thm:classical} imply that the factor containing a generator 
in cohomology of maximal degree is of the form $B(2i-1,2i+2p-3)$ 
only for $\SU(2n+1)/\SO(2n+1)$, $\SU(2n)/\SO(2n)$ and $\SU(2n)/\Sp(n)$ 
when $n=p-1$. Thus we have the following. 

\begin{Thm} 
   \label{expbounds} 
   For $p\geq 5$, there are exponent bounds: 
\[
\begin{array}{|c|c||c|c|}
\hline
\text{Type} & G/H & p\geq 5 & \text{Exponent} \\
\hline
AI & \SU(2n+1)/\SO(2n+1) & p>n &
\left\{\begin{array}{ll} 
    \leq p^{4n+2}\ \ \mbox{if $p-1=n$} \\ 
    =p^{4n+1}\ \ \mbox{if $p-1>n$}  
\end{array}\right. 
\\ 
 & \SU(2n)/\SO(2n) & p>n &
\left\{\begin{array}{ll} 
    \leq p^{4n}\ \ \mbox{if $p-1=n$}  \\ 
    =p^{4n-1}\ \ \mbox{if $p-1>n$} 
\end{array}\right. 
\\ \hline
AII & \SU(2n)/\Sp(n) & p>n &
\left\{\begin{array}{ll} 
    \leq p^{4n}\ \ \mbox{if $p-1=n$} \\ 
    =p^{4n-1}\ \ \mbox{if $p-1>n$} 
\end{array}\right. 
\\ \hline
AIII & \dfrac{U(n)}{U(m)\times U(n-m)}^\dag & p>n/2 & =p^{2n-1}  
\\ \hline
BDI & \dfrac{\SO(2n+1)}{\SO(2m)\times \SO(2(n-m)+1)}^\dag & p>n & =p^{4n-1} 
\\
 & \dfrac{\SO(2n+1)}{\SO(2m-1)\times \SO(2(n-m)+2)}^\ddag & p>n & =p^{4n-1} 
\\
 & \dfrac{\SO(2n+2)}{\SO(2m+1)\times \SO(2(n-m)+1)}^\dag & p>n & =p^{4n-1} 
\\
 & \dfrac{\SO(2n+2)}{\SO(2m)\times \SO(2(n-m)+2)}^\ddag & p>n-1 & =p^{4n-1} 
\\ \hline
CI & \Sp(n)/U(n) & p>n & =p^{4n-1} 
\\ \hline
CII & \dfrac{\Sp(n)}{\Sp(m)\times \Sp(n-m)}^\dag & p>n & =p^{4n-1} 
\\ \hline
DIII & \SO(2n)/U(n) & p>n-1 & =p^{4n-3} 
\\ \hline
\end{array}
\] 
\begin{itemize}
\item for $\dag$, we assume $2m\le n$
\item for $\ddag$, we assume $2m\le n+1$
\end{itemize} 
$\qqed$ 
\end{Thm}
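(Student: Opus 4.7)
The plan is to reduce the exponent calculation to reading off the maximum exponent among the factors appearing in the loop space decompositions of Theorem~\ref{thm:classical}. First, use the identity $\exp_{p}(Y) = \exp_{p}(\Omega Y)$, which holds because $\pi_{k}(\Omega Y) \cong \pi_{k+1}(Y)$ shifts homotopy groups by one degree without altering their $p$-torsion structure; this lets me work entirely with the explicit product decomposition of $\Omega(G/H)$ rather than with $G/H$ itself. Then exploit $\exp_{p}(X_{1} \times X_{2}) = \max\{\exp_{p}(X_{1}), \exp_{p}(X_{2})\}$ to reduce the problem to individual factors, applying Theorem~\ref{CMNexp} to sphere factors and Theorem~\ref{DT} to $B$-space factors. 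Since looping preserves exponents, $\Omega S^{2n+1}$ and $\Omega B(2m-1, 2m+2p-3)$ contribute exactly the same bounds as their unlooped counterparts.

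As spelled out in the paragraph preceding the theorem, the overall maximum is controlled solely by those factors whose cohomology carries a generator of maximal degree. Writing the rational type of $\Omega(G/H)$ as $m_{1} \leq \cdots \leq m_{\ell}$, one gets equality $\exp_{p}(G/H) = p^{m_{\ell}}$ whenever every factor carrying a generator in degree $2m_{\ell}+1$ is a sphere, and the two-sided bound $p^{m_{\ell}} \leq \exp_{p}(G/H) \leq p^{m_{\ell}+1}$ as soon as at least one such factor is $B(2m_{\ell}-2p+3, 2m_{\ell}+1)$. For each row of the table I would inspect Theorem~\ref{thm:classical} to identify $m_{\ell}$ and determine the type of the maximal-degree factor, then substitute into the bound from Theorem~\ref{CMNexp} or Theorem~\ref{DT}.

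The main obstacle is bookkeeping, specifically locating which classical cases fail to yield sharp equality. The paper already pinpoints these: the maximal-degree factor is a $B$-space exactly for $\SU(2n+1)/\SO(2n+1)$, $\SU(2n)/\SO(2n)$, and $\SU(2n)/\Sp(n)$ at $p = n+1$, which accounts for the dichotomy between strict equality and the weaker upper bound in those three rows of the table. In every other classical case the maximal-degree factor is a sphere and equality holds. Particular care is needed at the boundary $n = p-1$, which is precisely where the decomposition transitions from producing only spheres in maximal degree to producing a $B$-space, forcing one to fall back from the CMN equality to the Davis--Theriault upper bound.
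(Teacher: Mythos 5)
Your proposal is correct and follows essentially the same route as the paper: reduce to the explicit product decomposition of $\Omega(G/H)$ via $\exp_p(\Omega Y)=\exp_p(Y)$ and $\exp_p(X_1\times X_2)=\max\{\exp_p(X_1),\exp_p(X_2)\}$, feed in Theorems~\ref{CMNexp} and~\ref{DT}, observe that only the factor(s) carrying the top-degree generator matter, and isolate the three rows where that factor is a $B$-space (exactly when $p-1=n$). This matches the paragraph preceding the theorem in the paper, so no further comment is needed.
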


Theorems~\ref{CMNexp} and~\ref{DT} also imply the exponent bounds 
listed in Theorem~\ref{thm:exceptional}.

\section*{Appendix}
\newcommand{\perpendicular}{\alpha^{\bot}\!}
\newcommand{\newvar}[1]{t_{#1}}
\newcommand{\reflection}{\rho}
\newcommand{\sroot}{\tau}
\newcommand{\generator}[1]{x_{#1}}

For $p>5$, we show that 
\[
 Qi^*:QH^m(E_7;\F_p) \to QH^m(\SU(8)/C;\F_p)
\]
is non-trivial for $m\in\{3,11,15\}$, 
where $C=\{\pm I\}$. 
To see this we show that 
\[
 Qi^*:QH^m(BE_7;\F_p) \to QH^m(B(\SU(8)/C;\F_p))
\]
is non-trivial for $m\in\{4,12,16\}$ via the Weyl group invariant subrings.

The extended Dynkin-Coxeter diagram for $E_7$ is as follows:
\begin{center}
\newcommand{\simpleroot}[2]{\put(#1){\circle{#2}}}
\newcommand{\simplerootname}[3]{\put(#1){\makebox(0,0)[#2]{$#3$}}}
\newcommand{\angleone}[3]{\put(#1){\line(#2){#3}}}
\newcommand{\anglemulti}[5]{\multiput(#1)(#2){#3}{\line(#4){#5}}}
\unitlength 1mm
\begin{picture}(47,20)
\simpleroot{1,11}{1}
\simpleroot{11,11}{1}
\simpleroot{21,11}{1}
\simpleroot{31,11}{1}
\simpleroot{41,11}{1}
\simpleroot{51,11}{1}
\simpleroot{61,11}{1}
\simpleroot{31,1}{1}
\angleone{1.5,11}{1,0}{9}
\angleone{11.5,11}{1,0}{9}
\angleone{21.5,11}{1,0}{9}
\angleone{31.5,11}{1,0}{9}
\angleone{41.5,11}{1,0}{9}
\angleone{51.5,11}{1,0}{9}
\angleone{31,1.5}{0,1}{9}
\simplerootname{1,13}{b}{-\tilde\alpha}
\simplerootname{10.9,13}{b}{\alpha_1}
\simplerootname{32,1}{l}{\alpha_2}
\simplerootname{20.9,13}{b}{\alpha_3}
\simplerootname{30.9,13}{b}{\alpha_4}
\simplerootname{40.9,13}{b}{\alpha_5}
\simplerootname{50.9,13}{b}{\alpha_6}
\simplerootname{60.9,13}{b}{\alpha_7}
\end{picture}
\end{center}
We adopt a basis $t_i$'s satisfying
\[
\tilde\alpha=\newvar{1}-\newvar{2},\  
\alpha_1=\newvar{3}-\newvar{2},\ 
\alpha_2=\frac{(\newvar{1}+\cdots+\newvar{4})-(\newvar{5}+\cdots+\newvar{8})}{2},\ 
\alpha_i=\newvar{i+1}-\newvar{i}\ (3\leq i\leq7).
\]
The Weyl group $W(A_7)$ for $\SU(8)/C$ is generated by 
the reflection corresponding to $\alpha_i$ ($i\neq2$) and $\tilde{\alpha}$, and
\[
H^*(B(\SU(8)/C);\F_p)=H^*(BT;\F_p)^{W(A_7)}=\F_p[c_2,\ldots,c_8],
\]
where $c_i$ is the $i$-th elementary symmetric polynomial in $\newvar{j}$'s.
Let $\reflection$ be the reflection corresponding to $\alpha_2$.  
We check that algebra generators in degrees 4, 12, and 16 contain
$c_{2},c_{6}$ and $c_{8}$, respectively, in 
\[
H^*(BE_7;\F_p)=H^*(BT;\F_p)^{W(E_7)}=\F_p[c_2,\ldots,c_8]^{\reflection}.
\]

Let $a_i$ and $b_i$ be the $i$-th elementary symmetric polynomials 
 in $\newvar{1},\ldots,\newvar{4}$ and $\newvar{5},\ldots,\newvar{8}$, respectively.
Notice that $\alpha_2=
\frac{\newvar{1}+\newvar{2}+\newvar{3}+\newvar{4}-
\newvar{5}-\newvar{6}-\newvar{7}-\newvar{8}}{2}=a_1$ and
$c_i=\sum_{j+k=i}a_jb_k$.

Denote $\frac{\alpha_2}{2}$ by $\sroot$, for short.  
Since $\reflection(\newvar{i})=\newvar{i}-\sroot$ for $i\leq4$ and
$\reflection(\newvar{i})=\newvar{i}+\sroot$ for $i\geq4$, 
we can compute $\reflection(a_i)$ and $\reflection(b_i)$ easily, and
this yields the following:
\begin{center}
\renewcommand{\arraystretch}{1.2}
\begin{tabular}{@{}p{30em}@{}}
$\reflection(c_2) = 
c_2
,$
\\
$\reflection(c_3) = 
c_3
+2(a_2-b_2)\sroot
,$
\\
$\reflection(c_4) \equiv 
c_4
+3(a_3-b_3)\sroot
-3(a_2+b_2)\sroot^2 \mod(\sroot^4)
,$
\\
$\reflection(c_5) \equiv 
c_5
+4(a_4-b_4)\sroot
-2(a_3+b_3)\sroot^2 \mod(\sroot^4)
,$
\\
$\reflection(c_6) \equiv 
c_6
+(a_3b_2-a_2b_3)\sroot
-2a_2b_2\sroot^2
-2(a_3-b_3)\sroot^3 \mod(\sroot^4)
,$
\\
$\reflection(c_8) \equiv 
c_8
+(a_4 b_3-a_3 b_4)\sroot
+(a_4b_2+a_2b_4-a_3b_3)\sroot^2
-(a_3 b_2-a_2 b_3)\sroot^3 \mod(\sroot^4)
.$
\\
\end{tabular}

\end{center}
We then conclude a generator $\generator{i}$ in degree $i$ satisfies 
the following by computing modulo $(\sroot^2)$:  

\begin{center}
\renewcommand{\arraystretch}{1.2}
\begin{tabular}{@{}p{30em}@{}}
$\generator{4}=c_2$, \\
$\generator{12}\equiv c_6-\frac{1}{6}c_2c_4+\frac{1}{8}c_3^2\mod(a_1),$
\\
$\generator{16}\equiv c_8-\frac{1}{4}c_2c_6-\frac{1}{8}c_3c_5+\frac{1}{12}c_4^2\mod(a_1).$
\\
\end{tabular}
\end{center}


\end{document}